\newcommand{\n}{\noindent}
\newcommand{\vp}{\varepsilon}
\numberwithin{equation}{section}
\theoremstyle{plain}
\newtheorem{thm}{Theorem}[section]
\newtheorem{lem}[thm]{Lemma}
\theoremstyle{definition}
\begin{document}
 
\thispagestyle{empty}

\title{Brownian Motion with a Singular Drift}

\author{
Dante DeBlassie\\
Adina Oprisan\\
Robert G. Smits\\
Department of Mathematical Sciences\\
New Mexico State University\\
P. O. Box 30001 \\
Department 3MB\\
Las Cruces, NM \ 88003-8001\\
deblass@nmsu.edu\\
aoprisan@nmsu.edu\\
rsmits@nmsu.edu}

\date{}
\maketitle

\begin{quote}
We study the effect of a power law drift on Brownian motion in the positive half-line, where the order of the drift at $0$ and $\infty$ is different. 
\end{quote}

\bigskip 

\n \emph{2000 Mathematical Subject Classification}. Primary 60J60, 60J65, 60J55. Secondary 60F10.
\bigskip 

\n \emph{Key Words and Phrases}. Brownian motion, Brownian motion with drift, half-line, exit time, stochastic comparison, large deviations, $h$-transform.

\bigskip
\n \emph{Running Title: }Brownian motion with a singular drift. 
\newpage

\section{Introduction}\label{sec1}

\indent For a real-valued process $X_t$, let $\tau_R(X)$ be the first time the process hits $R$. Specialize $X_t$ to be a Brownian motion with a power law drift: the process satisfying the stochastic differential equation
\[
\left\{\begin{array}{l}
dX_t=dB_t-\beta X_t^{-p}\,dt,\quad t<\tau_0(X)\\
\noalign{\smallskip}
X_0=x>0,
\end{array}\right.
\]
where $\beta\not=0$ and $p>0$. 

\bigskip\n Note that when $\beta=0$, $X_t$ is merely Brownian motion and it is well-known (Feller 1971) that
\[
P_x(\tau_0(X)>t)=\frac{2}{\sqrt{2\pi}}\int_0^{x/\sqrt t}e^{-u^2/2}\,dt.
\]

\bigskip\n When $p=1$, it is known that for $\beta>-\frac12$, 
\[
P_x(\tau_0(X)>t)=\frac{2^{1/2-\beta}}{\Gamma(1/2+\beta)}\int_0^{x/\sqrt t}u^{2\beta}e^{-u^2/2}\,dt,
\]
and for $\beta\leq-\frac12$,
\[
P_x(\tau_0(X)=\infty)=1
\]
(G\"oing-Jaeschke and Yor (2003)). Using Theorem 1.1 of Chapter 5 in Pinsky (1995), it is not hard to show if $\beta<0$, then
\[
\left\{\begin{array}{ll}
P_x(\tau_0(X)=\infty)=1,&\quad\text{for $p>1$}\\
&\\
0< P_x(\tau_0(X)=\infty)<1,&\quad\text{for $p<1$.}
\end{array}\right.
\]

\bigskip\n DeBlassie and Smits (2009) proved the following results for $\beta>0$.
\begin{itemize}
\item For $p>1$,

\[\left\{\begin{array}{ll}
E_x[\tau_0(X)^q]<\infty,&\quad\text{if $q<1/2$,}\\
&\\
E_x[\tau_0(X)^q]=\infty,&\quad\text{if $q>1/2$}.
\end{array}\right.
\]

\item For $p<1$,
\[
\lim_{t\to\infty}t^{-(1-p)/(1+p)}P_x(\tau_0(X)>t)=-\gamma(p,\beta),
\]
where
\[
\gamma(p,\beta)=\tfrac12p^{-2p/(1+p)}\beta^{2/(1+p)}\left[B\left(\tfrac12,\tfrac{1-p}{2p}\right)+B\left(\tfrac32,\tfrac{1-p}{2p}\right)\right]B\left(\tfrac12,\tfrac{1-p}{2p}\right)^{-(1-p)/(1+p)}.
\]
\end{itemize}

\bigskip\n In particular, there is a phase transition of sorts as the power $p$ passes through the value $1$: roughly power law behavior (with the same power) of the tail versus subexponential behavior (with a different power).

\bigskip\n For the last case of $\beta>0$ and $0<p<1$, it is not hard to show (we indicate the argument in the next section) that so long as the form of the drift is $\beta x^{-p}$ for large values of $x$, one can change the drift to be of the form $\alpha x^{-q}$, $0<q<1,$ for small values of $x$, keeping it bounded in between, and still have the process hit $0$ almost surely. This holds even if the multiplier $\alpha$ is negative. A natural question suggested by this is to determine how much effect this change has on the asymptotic behavior of the time to hit $0$. We were surprised to learn that the behavior near the origin does not influence the time to hit $0$, at least on the logarithmic level.

\bigskip\n We now state our results precisely. Consider the diffusion $X_t$ given by
\begin{equation}\label{eq1.1}
\left\{\begin{array}{l}
dX_t=dB_t+b(X_t)\,dt\\
\noalign{\smallskip}
X_0=x>0,
\end{array}\right.
\end{equation}
where $B_t$ is one-dimensional Brownian motion and for some $0<M_1<M_2$, $\alpha\in\mathbb{R}$, $\beta>0$ and $0<p,q<1$,
\begin{equation} \label{eq1.2}
b(x)=\left\{
\begin{array}{ll}
-\alpha x^{-q},&\qquad 0<x\leq M _1\\
\text{bounded measurable},&\qquad M_1<x<M_2\\
-\beta x^{-p},&\qquad M_2\leq x.
\end{array}
\right.
\end{equation}
By standard facts, the law of $X_t$ on the space of continuous paths in $(0,\infty)$ exists uniquely up to an explosion time. Given $a\in[0,\infty]$, let
\[
\tau_a(X)=\inf\{t>0:X_t=a\}.
\]
Below, we will show the explosion time is finite:
\begin{equation}\label{eq1.3}
P_x(\tau_0(X)<\infty)=1.
\end{equation}
Here is our main theorem.
\begin{thm}\label{thm1.1}
Under the condition \eqref{eq1.2}, the solution $X_t$ of \eqref{eq1.1} satisfies
\[
\lim_{t\to\infty}t^{-(1-p)/(1+p)}\log P_x(\tau_0(X)>t)=-\,\gamma(p,\beta),
\]
where
\begin{equation}\label{eq1.4}
\gamma(p,\beta)=\tfrac12p^{-2p/(1+p)}\beta^{2/(1+p)}\left[B\left(\tfrac12,\tfrac{1-p}{2p}\right)+B\left(\tfrac32,\tfrac{1-p}{2p}\right)\right]B\left(\tfrac12,\tfrac{1-p}{2p}\right)^{-(1-p)/(1+p)}.
\end{equation}
\end{thm}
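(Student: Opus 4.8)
\noindent\emph{Strategy.} Set $\theta=(1-p)/(1+p)\in(0,1)$, and let $Y$ denote the diffusion on $(0,\infty)$ whose drift is $-\beta z^{-p}$ on \emph{all} of $(0,\infty)$; DeBlassie and Smits (2009) showed $t^{-\theta}\log P_x(\tau_0(Y)>t)\to-\gamma(p,\beta)$. Behind such a limit is the fact that, on the stretched-exponential scale $e^{-\gamma t^{\theta}}$, survival up to time $t$ is produced entirely by long sojourns of the path \emph{above} $M_2$, where the drift is weak; whenever the path lies in the compact region $(0,M_2]$ it is absorbed at $0$, or returns to $M_2$, within a time having an exponential moment, and since $\theta<1$ this contributes only $e^{-ct}=o(e^{-\gamma t^{\theta}})$. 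As $X$ and $Y$ have identical dynamics on $[M_2,\infty)$, the plan is to transport the constant $\gamma(p,\beta)$ from $Y$ to $X$ by a regeneration (excursion) decomposition at the level $M_2$, together with subexponential-sum estimates. A crude stochastic comparison of $X$ with $Y$ does not suffice by itself --- depending on the sign of $\alpha$, $b$ is unbounded either above or below near $0$, so one of the two-sided bounds $\underline b\le b\le\overline b$ cannot be arranged with $\pm\beta z^{-p}$-type drifts --- but comparison and scale-function estimates do enter the lemmas below.

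\noindent First I would prove a \emph{compact-region estimate}: for some $c>0$,
\[
\sup_{0<x<M_2}E_x\big[e^{\,c\,\sigma}\big]<\infty,\qquad \sigma=\inf\{t>0:X_t\in\{0,M_2\}\},
\]
and in particular that from $M_2$ an excursion of $X$ below $M_2$ is absorbed at $0$ with some probability $\rho>0$ before returning to $M_2$. Near the origin $b(x)=-\alpha x^{-q}$ with $0<q<1$, so the scale density of $X$ remains bounded away from $0$ and $\infty$ as $x\downarrow0$ --- for \emph{every} $\alpha\in\mathbb R$, including the repulsive case $\alpha<0$ --- and hence on $(0,M_2)$ the scale function and speed measure of $X$ are comparable to those of a nondegenerate diffusion on a compact interval, whose exit time has an exponential moment; the estimate follows, as does \eqref{eq1.3}. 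Then, using It\^o excursion theory at $M_2$ (equivalently the successive passage times to $M_2$), I would write
\[
\tau_0(X)=V_X+W_X ,
\]
where $W_X$ is the total time $X$ spends in $(0,M_2]$ before $\tau_0(X)$ and $V_X$ the total time in $(M_2,\infty)$. By the compact-region estimate, $W_X$ is dominated by a geometric$(\rho)$ sum of i.i.d.\ excursion lengths, so $\sup_x E_x[e^{cW_X}]<\infty$ for some $c>0$; and $V_X=\sum_{i=1}^{N}\ell_i$, where $N$ has a geometric tail and is independent of the i.i.d.\ sequence $(\ell_i)$, whose common law is the length of an excursion above $M_2$ of a diffusion with drift $-\beta z^{-p}$ there --- the \emph{same} law that enters the parallel decomposition $\tau_0(Y)=V_Y+W_Y$, $V_Y=\sum_{i=1}^{N'}\ell_i$, for $Y$, with a different (still geometric) count $N'$ and a light-tailed $W_Y$.

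\noindent Finally I would transport the rate. By the result and the method of DeBlassie and Smits (2009) applied to the pure power law above $M_2$, one has $\log P(\ell_1>s)\sim-\gamma(p,\beta)s^{\theta}$, so the law of $\ell_1$ is subexponential; Kesten's lemma then gives $P(V_X>s)\sim E[N]\,P(\ell_1>s)$ and $P(V_Y>s)\sim E[N']\,P(\ell_1>s)$, hence $\log P(V_X>s)=\log P(V_Y>s)+O(1)$. Feeding this and the light tail of $W_Y$ into $\tau_0(Y)=V_Y+W_Y$ --- via the split $P(\tau_0(Y)>t)\le P(V_Y>t-t^{\varrho})+P(W_Y>t^{\varrho})$ with $\theta<\varrho<1$, so the light-tailed term is $e^{-ct^{\varrho}}=o(e^{-\gamma t^{\theta}})$ and no constant is degraded --- recovers $s^{-\theta}\log P(V_Y>s)\to-\gamma(p,\beta)$, whence $s^{-\theta}\log P(V_X>s)\to-\gamma(p,\beta)$ as well. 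The same truncation applied to $\tau_0(X)=V_X+W_X$, together with the trivial inclusion $\{V_X>t\}\subseteq\{\tau_0(X)>t\}$, yields Theorem \ref{thm1.1}; a general starting point $x$ is handled by routine adjustments, since the first sojourn above $M_2$ only adds one more heavy-tailed summand, which leaves the logarithmic tail of a subexponential sum unchanged.

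\noindent The main obstacle is the pair consisting of the regeneration step and the exponential tail of the compact-region time: making the excursion decomposition at $M_2$ rigorous, and --- precisely in the repulsive case $\alpha<0$, where the singular drift pushes the process \emph{away} from $0$ --- verifying that the time spent in $(0,M_2]$ nonetheless has an exponential moment, via the scale and speed estimates. Once this is in hand the last step is essentially bookkeeping, the only subtlety being that additive decompositions interact with stretched-exponential asymptotics solely through the subexponential single-big-jump principle, which is why the truncation must be at scale $t^{\varrho}$, $\theta<\varrho<1$, rather than at a fixed fraction of $t$.
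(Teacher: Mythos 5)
Your proposal takes a genuinely different route from the paper. The paper's proof of Theorem~\ref{thm1.1} runs entirely through the $h$-transform/Feynman--Kac representation of Lemma~\ref{lem2.5}: the lower bound (Section~\ref{sec3}) plugs a near-optimal Cameron--Martin shift into the Brownian functional, and the upper bound (Sections~\ref{sec4}--\ref{sec6}) applies Varadhan's theorem after a sequence of Girsanov drift transformations (Lemmas~\ref{lem5.1}--\ref{lem5.4}) designed to reduce a drift that may be positive or non-monotone on $[M_1,M_2]$ to the monotone $C^3$ case of Theorem~\ref{thm4.1}; the sign of $\alpha$ forces three separate sub-cases. You instead propose a renewal/excursion decomposition at level $M_2$ and transport the constant $\gamma(p,\beta)$ from the known pure-power-law result via subexponential sums. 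Your diagnosis of \emph{why} the behaviour near the origin is irrelevant (a bounded speed/scale on $(0,M_2]$ gives an exponential moment for the exit time, which is $o$ on the $e^{-\gamma t^\theta}$ scale since $\theta<1$) is exactly the structural explanation the paper's Girsanov machinery obscures, and the compact-region scale/speed computation you sketch is correct for all $\alpha\in\mathbb R$, matching the paper's Lemma~\ref{lem2.2}.

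There are, however, two places where the outline needs more than ``bookkeeping.''
First, the tail of a single excursion length $\ell_1$ is not the DeBlassie--Smits (2007/2009) result: their theorem concerns $\tau_0(Y)$, whereas $\ell_1$ is the first return time of $Y$ to the interior level $M_2$. You flag this as ``the result and the method of DeBlassie and Smits applied to the pure power law above $M_2$,'' which is the right instinct --- the $h$-transform/Schilder argument does carry over to $\tau_{M_2}$ with the same rate functional, since the constraint $B_u>M_2/\sqrt t$ rescales to $B_u\geq 0$ --- but it is a new lemma, not a citation, and is where the real analytic work of this approach would sit.
Second, and more seriously as written, invoking Kesten's lemma to pass between $V_X=\sum_1^N\ell_i$ and $V_Y=\sum_1^{N'}\ell_i$ requires that the law of $\ell_1$ be \emph{subexponential}, which does \emph{not} follow from the logarithmic asymptotics $\log P(\ell_1>s)\sim-\gamma s^\theta$ alone; Weibull tails $e^{-\gamma s^\theta}$ are subexponential, but $e^{-\gamma s^\theta(1+o(1))}$ need not be, and Pitman-type criteria need control of the derivative of the hazard. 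Crude bounds of the form $P(\sum_1^n\ell_i>t)\leq nP(\ell_1>t/n)$ summed against the geometric tail lose the constant $\gamma$ (the dominant $n$ is of order $t^{\theta/(1+\theta)}$), so the single-big-jump input is genuinely needed here. This gap can be patched: either establish subexponentiality of $\ell_1$ from the regularity of one-dimensional diffusion hitting-time densities, or replace Kesten's lemma by a two-sided truncate-and-Chernoff argument (split off the event $\max_i\ell_i>(1-\delta)t$, bound the remainder by a Chernoff estimate with $\lambda\asymp t^{\theta-1}$, and let $\delta\downarrow0$), which preserves the constant and needs only the two-sided logarithmic bound on $P(\ell_1>s)$. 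Either repair keeps your approach intact, and it then offers a conceptually cleaner route than the paper's --- no $C^3$ regularization, no $b'\geq0$ hypothesis, no case split on the sign of $\alpha$ --- at the cost of importing excursion-theoretic and subexponential-sum machinery that the Feynman--Kac argument avoids.
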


\bigskip\n Our result extends to drifts of the form $\alpha(x)x^q\ell_1(x)$ near $0$ and $\beta(x)x^p\ell_2(x)$ near $\infty$, where $\ell_1$ and $\ell_2$ are slowly varying at $0$ and $\infty$, respectively, and the the coefficients $\alpha(x)$ and $\beta(x)$ have limiting values $\alpha_0$ and $\beta_0>0$. In the context of regular variation, one would say $x^q\ell_1(x)$ is regularly varying at $0$ from the right, with index $q$, and $x^p\ell_2(x)$ is regularly varying at infinity, with index $p$. 

\bigskip\n
To be precise, a measurable function $\ell>0$ is slowly varying at infinity if for each $\lambda>0$,
\[
\ell(\lambda x)/\ell(x)\to 1,\text{ as $x\to\infty$.}
\]
Similarly, measurable function $\ell>0$ is slowly varying at zero from the right if for each $\lambda>0$,
\[
\ell(\lambda x)/\ell(x)\to 1,\text{ as $x\to0^+$.}
\]

\begin{thm}\label{thm1.2}
Suppose $\alpha:(0,\infty)\to\mathbb{R}$ and $\beta:(0,\infty)\to(0,\infty)$ are continuous and $p,q\in(0,1)$. Assume:
\begin{itemize}
\item $\alpha(x)$ is bounded in a neighborhood of $0$ and $\lim_{x\to\infty}\beta(x)=\beta_0>0$ exists;
\item $\ell_1$ is continuous and slowly varying at zero from the right;
\item $\ell_2$ is continuous and slowly varying at infinity.
\end{itemize}
Suppose for some $0<M_1<M_2$,
\begin{equation} \label{eq1.5}
b(x)=\left\{
\begin{array}{ll}
-\alpha(x) x^q\ell_1(x),&\qquad 0<x\leq M _1\\
\text{\rm bounded measurable},&\qquad M_1<x<M_2\\
-\beta(x) x^p\ell_2(x),&\qquad M_2\leq x.
\end{array}
\right.
\end{equation}
Then the solution $X_t$ of \eqref{eq1.1} satisfies
\[
\lim_{t\to\infty}t^{-(1-p)/(1+p)}\log P_x(\tau_0(X)>t)=-\,\gamma(p,\beta_0),
\]
where $\gamma(p,\beta_0)$ is from \eqref{eq1.4}.

\end{thm}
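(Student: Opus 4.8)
\noindent\emph{Proof proposal.}\ The plan is to deduce Theorem~\ref{thm1.2} from Theorem~\ref{thm1.1} by a stochastic comparison, squeezing the diffusion $X$ of \eqref{eq1.1}--\eqref{eq1.5} between two diffusions whose drifts are of the exact form \eqref{eq1.2} with the coefficient $\beta$ replaced by $\beta_0\pm\vp$, and then sending $\vp\downarrow0$. First one checks \eqref{eq1.3}. Writing the scale function $s(x)=\int_{1}^{x}\exp\!\big(-2\int_{1}^{y}b(z)\,dz\big)\,dy$, near $0$ the drift is bounded (or at worst integrable, since $q<1$), so $s(0^{+})>-\infty$; near $\infty$, since $b$ is regularly varying there and $p<1$, Karamata's theorem gives $\int_{1}^{y}|b(z)|\,dz\to\infty$, so $s'(y)\to\infty$ and $s(\infty)=+\infty$. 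By the standard boundary classification this forces $P_x(\tau_0(X)<\infty)=1$ and rules out explosion.

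For the comparison itself, recall that for solutions driven by the \emph{same} Brownian motion, $b_1\le b_2$ pointwise implies $X^{1}_t\le X^{2}_t$ for all $t$, hence $\tau_0(X^{1})\le\tau_0(X^{2})$ and $P_x(\tau_0(X^{1})>t)\le P_x(\tau_0(X^{2})>t)$. The behaviour of $b$ on $(0,M_2)$ is bounded and therefore harmless: by the argument indicated in Section~\ref{sec1} (already used for Theorem~\ref{thm1.1}) one replaces $b$ on $(0,M_1]$ by a pure power $-\alpha_0x^{-q}$, bounding $b$ from above and from below, without affecting the a.s.\ hitting of $0$ nor the logarithmic asymptotics, the excursions of $X$ below $M_2$ costing a time and a survival probability of lower order than $t^{(1-p)/(1+p)}$. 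For $x\ge M_2$ one uses $\beta(x)\to\beta_0$: given $\vp>0$ there is $R_\vp$ with $\beta_0-\vp\le\beta(x)\le\beta_0+\vp$ for $x\ge R_\vp$, so $b$ is squeezed between $-(\beta_0+\vp)x^{-p}\ell_2(x)$ and $-(\beta_0-\vp)x^{-p}\ell_2(x)$. Comparing against diffusions of the form \eqref{eq1.2} with coefficients $\beta_0\pm\vp$, applying Theorem~\ref{thm1.1} to each, and letting $\vp\downarrow0$ while using that $\beta\mapsto\gamma(p,\beta)$ is continuous on $(0,\infty)$ — immediate from \eqref{eq1.4}, the Beta functions being finite for $p\in(0,1)$ — would give $\lim_{t\to\infty}t^{-(1-p)/(1+p)}\log P_x(\tau_0(X)>t)=-\gamma(p,\beta_0)$.

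The genuinely delicate step — and the main obstacle — is the passage through the slowly varying factor $\ell_2$ (and, near $0$, $\ell_1$, though that region is irrelevant in any case). A crude Potter bound $\ell_2(x)=O(x^{\pm\delta})$ would replace the power $p$ by $p\mp\delta$, and with it the rate exponent $(1-p)/(1+p)$, which one cannot afford. The remedy is to \emph{localise} the large deviation estimate in space: the variational problem underlying the constant $\gamma(p,\beta)$ shows that the paths contributing to $\{\tau_0(X)>t\}$ concentrate at spatial scale $x\asymp t^{1/(1+p)}$ (out to a level of this order and back, the return taking a time of order $t$ because the drift there is of order $x^{-p}$), and along any such scale tending to infinity a slowly varying $\ell_2$ is, by the uniform convergence theorem for regularly varying functions, asymptotically constant — so the effective drift coefficient felt by the optimal path tends to $\beta_0$ while $p$ is untouched. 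Making this localisation rigorous — e.g.\ by killing $X$ at the levels $c^{-1}t^{1/(1+p)}$ and $c\,t^{1/(1+p)}$, estimating the two excluded contributions, and then letting $c\to\infty$ — is where the real work lies; the scale-function computation, the comparison, and the continuity of $\gamma$ are all routine.
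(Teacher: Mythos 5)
Your first two paragraphs reproduce, essentially, the argument the paper uses in Section~\ref{sec7}: squeeze $X$ between diffusions whose drifts are pure power laws obtained from Potter's bounds for $\ell_1,\ell_2$, apply Theorem~\ref{thm1.1} to each, and let the perturbation parameters go to zero using the continuity of $\gamma$. Your third paragraph, however, correctly identifies the defect in this plan: Potter's bound forces the power $p$ to become $p\mp\delta$, and with it the rate exponent moves off $(1-p)/(1+p)$, so the sandwich does not close. Concretely, if $\overline X$ has drift index $p-\delta$ at infinity, then by Theorem~\ref{thm1.1} applied to $\overline X$ we have $t^{-(1-p)/(1+p)}\log P_x(\tau_0(\overline X)>t)\to-\infty$; combined with $P_x(\tau_0(\overline X)>t)\leq P_x(\tau_0(X)>t)$ this only yields $\liminf\geq-\infty$. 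Symmetrically, the upper comparison with index $p+\delta$ only gives $\limsup\leq 0$. The paper's own proof in Section~\ref{sec7} performs exactly this crude Potter squeeze (with drifts $\widetilde b,\overline b$ of indices $p\pm\delta$) and nevertheless writes the chain $-\gamma(p-\delta,\cdot)\leq\liminf\leq\limsup\leq-\gamma(p+\delta,\cdot)$ with the normalization $t^{-(1-p)/(1+p)}$; as you observe, that chain does not follow from Theorem~\ref{thm1.1} applied to processes whose rate exponents are $(1-(p\pm\delta))/(1+(p\pm\delta))\neq(1-p)/(1+p)$. So you have spotted a genuine gap in the argument the paper gives.

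Your proposed repair, though, does not close the gap either. The uniform convergence theorem says $\ell_2(\lambda x)/\ell_2(x)\to1$ uniformly over $\lambda$ in compact subsets of $(0,\infty)$; it does \emph{not} make $\ell_2$ asymptotically constant. Confining the process to $[c^{-1}t^{1/(1+p)},\,c\,t^{1/(1+p)}]$ and invoking UCT leaves an effective coefficient $\beta_0\ell_2(t^{1/(1+p)})$, which may tend to $0$ or to $\infty$ (take $\ell_2=\log$ or $\ell_2=1/\log$), in which case the normalized limit ought to be $0$ or $-\infty$ rather than $-\gamma(p,\beta_0)$; so the theorem as stated seems to require more than slow variation. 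If one strengthens the hypotheses to $\ell_2(x)\to1$ as $x\to\infty$ (and likewise $\ell_1\to1$ at $0$), then no Potter $\delta$ is needed at all: for $x$ large, $-(\beta_0+\vp)x^{-p}\leq b(x)\leq-(\beta_0-\vp)x^{-p}$, the comparison diffusions keep the exponent $p$, and both the theorem's conclusion and the simple squeeze (yours and the paper's) go through cleanly. Under the hypotheses as literally stated, neither your sketch nor the paper's Section~\ref{sec7} argument is complete.
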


\bigskip\n In the sequel, for a stochastic process $Z_t$ in $(0,\infty)$, for $0<\vp<M$, we will write
\[
\tau_{\vp,M}(Z)=\tau_\vp(Z)\wedge\tau_M(Z).
\]

%%%%%%%%%%%%%%%%%%%%%%%%%%%%%%%%%%%%%%%%%%%%%%%%%%%%%%%%%

\section{Preliminaries}\label{sec2}

First we prove zero is hit almost surely.

\begin{lem}\label{lem2.1}
For $X_t$ as in \eqref{eq1.1} with $b$ as in \eqref{eq1.2},
\[
P_x(\tau_0(X)<\infty)=1\text{ for all $x>0$.}
\]
\end{lem}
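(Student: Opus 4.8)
The plan is to use the standard scale-function criterion for one-dimensional diffusions to show that $X_t$ reaches $0$ (rather than exploding to $+\infty$) with probability one, and then rule out explosion in finite time. Recall that for the diffusion \eqref{eq1.1} with generator $\tfrac12 u'' + b(x)u'$ on $(0,\infty)$, the scale function is $s(x)=\int_{x_0}^x \exp\bigl(-2\int_{x_0}^y b(z)\,dz\bigr)\,dy$, and $X_t$ is recurrent toward $0$ (in the sense that $\tau_0(X)\wedge\tau_R(X)\to\tau_0(X)$ as $R\to\infty$ with $P_x(\tau_0(X)<\tau_R(X))\to 1$) precisely when $s(0^+)>-\infty$ and $s(\infty)=+\infty$. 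So the first step is to compute, or at least estimate, $s$ near $0$ and near $\infty$ using the explicit form \eqref{eq1.2} of $b$.

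First I would look at the behavior near infinity. For $x\geq M_2$ we have $b(x)=-\beta x^{-p}$, so $-2\int^y b(z)\,dz$ grows like $+\tfrac{2\beta}{1-p}y^{1-p}\to+\infty$ since $0<p<1$ and $\beta>0$; hence $\exp(-2\int^y b)$ blows up super-polynomially and $s(\infty)=+\infty$. This is the crucial place where the sign $\beta>0$ (inward-pushing drift at infinity) and $p<1$ are used, and it is what forces the process not to run off to $+\infty$. Next, near $0$: for $0<x\leq M_1$ we have $b(x)=-\alpha x^{-q}$ with $0<q<1$, so $\int_x^{x_0} b(z)\,dz$ converges as $x\to 0^+$ (the integrand is integrable since $q<1$), whence $\exp(-2\int^y b(z)\,dz)$ is bounded above and below by positive constants on $(0,M_1]$, and therefore $s(0^+)>-\infty$. (Note this holds regardless of the sign of $\alpha$, which matches the remark in the introduction.) Combining the two computations with the standard dichotomy gives $P_x(\tau_0(X)<\tau_R(X))\to 1$ as $R\to\infty$, i.e. $X_t$ hits $0$ before exploding, almost surely.

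Finally I must check that $0$ is actually reached in \emph{finite} time, i.e. that $\tau_0(X)<\infty$ a.s., and simultaneously that there is no explosion to $+\infty$ in finite time. Non-explosion to $+\infty$ follows from Feller's test: with $s(\infty)=+\infty$ the speed-measure integral $\int^\infty (s(\infty)-s(y))\,m(dy)$ is automatically $+\infty$, so $+\infty$ is inaccessible. For finiteness of $\tau_0$, I would use a comparison argument: since the drift pushes toward $0$ on $[M_2,\infty)$ and is bounded on $(0,M_2)$, one can dominate $X$ (after it enters a bounded region, which happens a.s. by the scale-function step) by a Brownian motion with bounded drift on a bounded interval, for which the exit time through $0$ is a.s. finite; alternatively, on $(0,M_1]$ the process behaves like Brownian motion perturbed by the integrable drift $-\alpha x^{-q}$, and an $h$-transform or direct Feller-test computation shows $0$ is accessible in finite time. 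Patching these pieces with the strong Markov property at the hitting times of $M_1$ and $M_2$ yields $P_x(\tau_0(X)<\infty)=1$.

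The main obstacle I anticipate is the bookkeeping at the boundary $0$: one must be careful that the singular-but-integrable drift $-\alpha x^{-q}$ does not make $0$ an entrance or natural boundary that the process fails to reach in finite time. The key quantitative fact making this work is $q<1$, which keeps $\int_0 b(z)\,dz$ finite so that $0$ behaves like a regular (accessible, finite-time) boundary exactly as for Brownian motion; verifying the relevant Feller integral $\int_0 (s(y)-s(0))\,m(dy)<\infty$ near $0$ is the one genuine computation, and everything else is assembling known facts via the strong Markov property.
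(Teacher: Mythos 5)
Your proposal is correct and uses the same underlying toolkit as the paper---the scale function $s$ and the observations that $s(\infty)=\infty$ (because $\beta>0$ and $p<1$) and $s(0^+)>-\infty$ (because $q<1$ makes $b$ integrable at the origin)---but it is organized differently. The paper first proves the result for \emph{continuous} $b$ (Lemma~\ref{lem2.2}), where It\^o's formula can be applied cleanly to the scale function $f$ to get the exit-probability identity \eqref{eq2.1}, and then handles the general bounded-measurable drift on $(M_1,M_2)$ by building a continuous majorant $\tilde b\geq b$ and invoking the Ikeda--Watanabe comparison theorem. You skip the two-stage reduction by appealing directly to Feller's boundary classification, which is valid for locally integrable measurable drift; this is a legitimate shortcut and arguably cleaner. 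You are also more explicit than the paper about why $0$ is reached in \emph{finite} time: the paper's step ``letting $r_1\downarrow0$ gives $P_x(\tau_0<\infty)=1$'' compresses exactly the Feller accessibility condition $\int_0(s(y)-s(0))\,m(dy)<\infty$ that you spell out; making that explicit is a genuine improvement in rigor.

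One small slip worth flagging: you assert the drift ``is bounded on $(0,M_2)$'' when motivating the first comparison route, but by \eqref{eq1.2} it equals $-\alpha x^{-q}$ on $(0,M_1]$ and is therefore unbounded near $0$ whenever $\alpha\neq0$ (integrable, but not bounded). Your proposed domination by a Brownian motion with bounded drift on a bounded interval does not go through as stated. Your second alternative---the $h$-transform/Feller-test computation at $0$, using only integrability of $x^{-q}$---is the correct one and is what actually closes the argument, so the overall proof strategy still works once that sentence is discarded.
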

\n This will be a consequence of the following special case.

\begin{lem}\label{lem2.2}
For $X_t$ from \eqref{eq1.1}, where $b$ from \eqref{eq1.2} is continuous,
\[
P_x(\tau_0(X)<\infty)=1\text{ for all $x>0$.}
\]
\end{lem}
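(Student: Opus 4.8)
The plan is to reduce the claim to a classical recurrence/explosion criterion for one-dimensional diffusions via the scale function. For a diffusion with generator $\frac12\frac{d^2}{dx^2}+b(x)\frac{d}{dx}$ on $(0,\infty)$, the scale function is $s(x)=\int_c^x \exp\bigl(-2\int_c^y b(z)\,dz\bigr)\,dy$ (any fixed base point $c>0$), and the standard dichotomy (e.g. the Feller test, or Theorem 1.1 of Chapter 5 in Pinsky 1995, which the authors already invoke) says that $0$ is hit almost surely before explosion to $+\infty$ provided $s(0^+)>-\infty$ while $s(\infty)=+\infty$, together with a check that there is no explosion to $0$ in finite time being irrelevant here because we only want $\tau_0(X)<\infty$, not finiteness of $E_x\tau_0$. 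So first I would compute $\int b$ on the two asymptotic regimes: near $0$, $b(x)=-\alpha x^{-q}$ with $0<q<1$, so $\int_0^y b(z)\,dz$ is finite (the singularity $z^{-q}$ is integrable), hence $\exp(-2\int_\cdot b)$ is bounded near $0$ and $s(0^+)>-\infty$; near $\infty$, $b(x)=-\beta x^{-p}$ with $\beta>0$, $0<p<1$, so $-2\int^y b(z)\,dz=\frac{2\beta}{1-p}y^{1-p}+O(1)\to+\infty$, whence $\exp(-2\int^y b)$ grows superpolynomially and $s(\infty)=+\infty$. The bounded-measurable piece on $[M_1,M_2]$ contributes only a bounded factor and does not affect either endpoint behavior.

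Concretely I would carry out the following steps. First, fix $c\in(M_1,M_2)$ and define $s$ as above; record that $b$ is locally bounded on $(0,\infty)$ except for the integrable singularity at $0$, so $s$ is a well-defined strictly increasing $C^1$ function on $(0,\infty)$ (and even extends continuously to $0$). Second, show $s(0^+)$ is finite: since $|b(x)|\le C x^{-q}$ for $x\le M_1$, the inner integral $\int_x^c b$ stays bounded as $x\downarrow 0$, so the integrand $\exp(-2\int_x^c b\,dz)$ of $s$ is bounded on $(0,M_1]$, giving $\int_0^{M_1}<\infty$. Third, show $s(\infty)=+\infty$: for $x\ge M_2$, $-2\int_c^x b(z)\,dz \ge \frac{2\beta}{1-p}x^{1-p} - C'$, so the integrand of $s$ is at least $c_0\exp\bigl(\tfrac{2\beta}{1-p}x^{1-p}\bigr)\to\infty$, and its integral over $[M_2,\infty)$ diverges. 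Fourth, invoke the standard boundary-classification result: with $Y_t=s(X_t)$ a local martingale on $(s(0^+),s(\infty))=(\,s(0^+),+\infty)$ run until it exits, and since the right end $+\infty$ is not reached (a continuous local martingale bounded below on $[0,T]$ cannot run off to $+\infty$; more carefully one uses that $s(\infty)=\infty$ means $\infty$ is non-attracting, via the Feller test comparing $\int s'\cdot(\text{speed})$), the process is forced to the left end $s(0^+)$, i.e. $P_x(\tau_0(X)<\infty)=1$. I would phrase this last step using the explicit criterion in Pinsky's Theorem 1.1, Ch. 5, exactly as the authors have already done for the $\beta<0$ discussion, so the paper's reader has a uniform reference.

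The main obstacle is purely technical rather than conceptual: one must be careful that the criterion being applied allows for a drift that is merely locally bounded (and in fact locally integrable near $0$), not continuous — the classical statements are often phrased for continuous or locally Lipschitz coefficients. This is exactly why the authors split off Lemma~\ref{lem2.2} as the continuous case: for continuous $b$ on $(0,\infty)$ the scale-function/Feller-test machinery applies verbatim, and the computation of $s(0^+)$ and $s(\infty)$ above goes through unchanged since continuity of $b$ on $(0,M_1]$ together with the bound $|b|\le Cx^{-q}$ is all that is used. So for Lemma~\ref{lem2.2} the proof is: verify $b$ is continuous on $(0,\infty)$ (given), form $s$, check $s(0^+)<\infty$ and $s(\infty)=\infty$ by the two integral estimates, and conclude via the standard diffusion boundary dichotomy that $0$ is attained a.s. The reduction of the general (bounded-measurable-in-the-middle) Lemma~\ref{lem2.1} to this continuous case is presumably handled afterward by a comparison or regularization argument, which is not part of the statement I am asked to prove here.
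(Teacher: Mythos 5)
Your proposal is correct and follows essentially the same scale-function argument as the paper: both hinge on the two estimates that the scale function is finite at $0$ (integrable singularity since $q<1$) and diverges at $\infty$ (the $\exp(\tfrac{2\beta}{1-p}x^{1-p})$ growth), and both trace back to Pinsky's Theorem 1.1 of Chapter 5. The only presentational difference is that the paper, rather than citing the Feller-test dichotomy as a black box, derives the explicit hitting probability $P_x(\tau_{r_1}<\tau_{r_2})=\frac{f(x)-f(r_2)}{f(r_1)-f(r_2)}$ from $Lf=0$ and It\^o's formula (using Pinsky Ch.\ 2, Thm.\ 1.1 for $\tau_{r_1,r_2}<\infty$), and then passes to the limits $r_2\uparrow\infty$, $r_1\downarrow 0$ -- a self-contained rendering of the same idea, which they say they include because verifying the hypotheses of the general theorem would be cumbersome.
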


\begin{proof}
This is a consequence of the proof of a very general result (part (ii) in Theorem 1.1 of Chapter 5) in Pinsky (1995). It is a bit cumbersome to verify the hypotheses there, so we will present the proof in our very special case. Let
\[
L=\frac12\frac{d^2}{dx^2}+b(x)\frac{d}{dx}
\]
be the differential operator corresponding to $X_t$. For any $0<r_1<r_2$, write 
\[\tau_{r_1,r_2}=\tau_{r_1,r_2}(X).
\]
By Theorem 1.1 in Chapter 2 of Pinsky, $\tau_{r_1,r_2}<\infty$ a.s. $P_x$ for any $x\in(r_1,r_2)$. Define
\[
f(x)=\int_0^x\exp\left(-2\int_0^zb(y)\,dy \right)\,dz,\quad x>0
\]
(note the inner integral is finite because $0<q<1$). Then $f\in C^2(0,\infty)$ and $Lf=0$. By It\^o's formula, 
\[
E_x[f(X(\tau_{r_1,r_2}))]=f(x).
\]
This implies
\begin{equation}\label{eq2.1}
P_x(\tau_{r_1}<\tau_{r_2})=\frac{f(x)-f(r_2)}{f(r_1)-f(r_2)}.
\end{equation}
The key fact is that for large $r_2$,
\begin{align*}
f(r_2)&\geq\int_{M_2}^{r_2}\exp\left(-2\int_0^zb(y)\,dy \right)\,dz\\
&=\int_{M_2}^{r_2}\exp\left(-2\int_0^{M_2}b(y)\,dy -2\int_{M_2}^zb(y)\,dy\right)\,dz\\
&=\exp\left(-2\int_0^{M_2}b(y)\,dy\right) \int_{M_2}^{r_2}\exp\left(-2\int_{M_2}^zb(y)\,dy\right)\,dz\\
&=C\int_{M_2}^{r_2}\exp\left(2\beta\int_{M_2}^zy^{-p}\,dy\right)\,dz
\intertext{(where $C<\infty$, since $b$ is integrable on any interval $(0,M)$)}
&=C\int_{M_2}^{r_2}\exp\left(\tfrac{2\beta}{1-p}\left(z^{1-p}-M_2^{1-p}\right)\right)\,dz.
\end{align*}
Since $1-p$ and $\tfrac{2\beta}{1-p}$ are both positive, we see $f(r_2)\to\infty$ as $r_2\to\infty$. Using this in \eqref{eq2.1} gives $P_x(\tau_{r_1}<\infty)=1$, and then letting $r_1\downarrow 0$ gives $P_x(\tau_0<\infty)=1$, as claimed.
\end{proof}

\noindent{\bf Proof of Lemma \ref{lem2.1}.} Given $b$ as in \eqref{eq1.2}, since $b$ is bounded on $[M_1,M_2]$, we can choose $\widetilde{M_1}<M_1$ and $\widetilde{M_2}>M_2$, along with a corresponding continuous $\tilde b$ satisfying \eqref{eq1.2} for $\widetilde{M_1}$, $\widetilde{M_2}$, such that $b\leq \tilde b$ on $(0,\infty)$. Then for $d\widetilde X_t=dB_t+\tilde b(\widetilde X_t)\,dt$, $\widetilde X_0=x$, by the Comparision Theorem for SDEs (Ikeda and Watanabe (1989), Theorem 1.1 in Chapter VI), $X_t\leq\widetilde X_t$ almost surely and so
\[
1=P_x(\tau_0(\widetilde X)<\infty)\leq P_x(\tau_0(X)<\infty),
\]
and so $P_x(\tau_0(X)<\infty)=1$, as claimed.\hfill$\square$

\bigskip
\n The next results pertain to the case when $b\in C^3(0,\infty)$.

\begin{lem}\label{lem2.3}
Suppose $b$ from \eqref{eq1.2} is in $C^1$. Then for
\begin{equation}\label{eq2.2}
h(x)=\exp\left(\int_0^xb(y)\,dy\right), \,x>0,
\end{equation}

\begin{equation}\label{eq2.3}
\text{$h(x)$ is bounded on $[M_1,M_2]$;}
\end{equation}

\begin{equation}\label{eq2.4}
\text{$h(x)$ is bounded below away from $0$ on $[M_1,M_2]$;}
\end{equation}

\begin{equation}\label{eq2.5}
h(x)=\exp\left(-\tfrac{\alpha}{1-q}x^{1-q}\right)\text{on $(0,M_1]$;}
\end{equation}
for some positive constant $C$,
\begin{equation}\label{eq2.6}
h(x)=C\exp\left(-\tfrac{\beta}{1-p}x^{1-p}\right), \,x>M_2.
\end{equation}
\end{lem}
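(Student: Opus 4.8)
The plan is to note first that $h$ is well defined, continuous, and strictly positive on all of $(0,\infty)$: near the origin $|b(y)|=|\alpha|y^{-q}$ with $0<q<1$, so $y\mapsto b(y)$ is integrable on $(0,M_1]$ and hence $x\mapsto\int_0^xb(y)\,dy$ is a finite continuous function on $(0,\infty)$; composing with $\exp$ keeps it continuous and makes it positive. Once this is in hand, the four claims split into two direct integrations and one compactness argument.

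For \eqref{eq2.5}, on $(0,M_1]$ the drift is exactly $b(y)=-\alpha y^{-q}$, so
\[
\int_0^xb(y)\,dy=-\alpha\int_0^xy^{-q}\,dy=-\frac{\alpha}{1-q}\,x^{1-q},
\]
the integral converging precisely because $q<1$; exponentiating gives \eqref{eq2.5}. For \eqref{eq2.6}, for $x>M_2$ I would split
\[
\int_0^xb(y)\,dy=\int_0^{M_2}b(y)\,dy-\beta\int_{M_2}^xy^{-p}\,dy=\int_0^{M_2}b(y)\,dy-\frac{\beta}{1-p}\bigl(x^{1-p}-M_2^{1-p}\bigr),
\]
so that
\[
h(x)=\exp\left(\int_0^{M_2}b(y)\,dy+\frac{\beta}{1-p}M_2^{1-p}\right)\exp\left(-\frac{\beta}{1-p}x^{1-p}\right)=:C\exp\left(-\frac{\beta}{1-p}x^{1-p}\right),
\]
with $C$ finite and positive since $b$ is integrable on $(0,M_2]$.

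Finally, \eqref{eq2.3} and \eqref{eq2.4} follow at once: $h$ is continuous and strictly positive on the compact interval $[M_1,M_2]$, hence attains a finite maximum and a strictly positive minimum there, which gives both the upper bound and the bound below away from zero. I do not anticipate any real obstacle here; the only point requiring a moment's care is the convergence of $\int_0^{\,\cdot}b(y)\,dy$ at the origin, which is exactly where the hypothesis $0<q<1$ is used, and the observation that $C>0$ in \eqref{eq2.6} because the same integrability makes the constant a genuine exponential of a finite number.
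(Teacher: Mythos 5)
Your proposal is correct and follows essentially the same route as the paper: both compute $\int_0^x b(y)\,dy$ explicitly on $(0,M_1]$ and on $[M_2,\infty)$ (the latter producing the constant $C$ from the integrability of $b$ on $(0,M_2]$), and obtain \eqref{eq2.3}--\eqref{eq2.4} from continuity and positivity of $h$ on the compact interval $[M_1,M_2]$. The only difference is that you spell out the constant $C$ and the compactness argument explicitly, which the paper leaves as ``immediate.''
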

\begin{proof}
For some constants $C_1$ and $C_2$,
\begin{align}\label{eq2.7}
\int_0^x b(y)\,dy&=-\tfrac{\alpha}{1-q}x^{1-q}I_{(0,M_1]}(x)+\left[C_1+\int_{M_1}^xb(y)\,dy\right]I_{(M_1,M_2)}(x)\notag\\
&+\left[C_2-\tfrac{\beta}{1-p}x^{1-p}\right]I_{[M_2,\infty)}(x).
\end{align}
The assertions of the Lemma follow immediately from this.
\end{proof}

\begin{lem}\label{lem2.4}
Suppose $b$ from \eqref{eq1.2} is in $C^1$ and $\alpha\geq0$. Then for 
\begin{equation}\label{eq2.8}
V=-\tfrac12(b^2+b'),
\end{equation}
we have $V\leq VI_{(M_1,\infty)}$.
\end{lem}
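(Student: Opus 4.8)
The plan is simply to unwind what the inequality $V\le VI_{(M_1,\infty)}$ actually asserts. Since $I_{(M_1,\infty)}$ is the indicator of the set $(M_1,\infty)$, the function $VI_{(M_1,\infty)}$ coincides with $V$ on $(M_1,\infty)$ and vanishes identically on $(0,M_1]$. Thus on $(M_1,\infty)$ the claimed inequality is the trivial $V\le V$, and the whole content of the lemma reduces to showing that $V\le 0$ on the interval $(0,M_1]$.

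So I would restrict attention to $x\in(0,M_1]$, where by \eqref{eq1.2} we have $b(x)=-\alpha x^{-q}$, hence (this is where the hypothesis $b\in C^1$ matters, although here the formula is explicit) $b'(x)=\alpha q\,x^{-q-1}$. Substituting into the definition \eqref{eq2.8},
\[
V(x)=-\tfrac12\bigl(\alpha^2 x^{-2q}+\alpha q\,x^{-q-1}\bigr),\qquad 0<x\le M_1.
\]
Because $\alpha\ge0$, $q>0$ and $x>0$, both summands inside the parentheses are nonnegative, so $V(x)\le0$ for $0<x\le M_1$, which is exactly what is required.

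There is no real obstacle here; the only points to be careful about are the bookkeeping of the indicator and the role of the sign condition $\alpha\ge0$. This is precisely the place in the paper where one uses $\alpha\ge0$ rather than merely $\alpha\in\mathbb{R}$ as allowed in \eqref{eq1.2}: if $\alpha<0$ the term $b'(x)=\alpha q\,x^{-q-1}$ is negative and of larger order than $b^2(x)=\alpha^2x^{-2q}$ as $x\to0^+$, so $b^2+b'$ would be negative near $0$ and the conclusion $V\le0$ on $(0,M_1]$ would fail.
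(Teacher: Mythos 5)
Your proof is correct and takes essentially the same approach as the paper: restrict to $(0,M_1]$, compute $b^2+b'$ explicitly from $b(x)=-\alpha x^{-q}$, and observe both summands are nonnegative when $\alpha\ge 0$. Your closing remark about the failure for $\alpha<0$ (since $x^{-q-1}$ dominates $x^{-2q}$ near $0$ when $q<1$) is a correct and useful observation, though not in the paper.
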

\begin{proof}
Since $\alpha\geq 0$ and $0<q<1$, on the interval $(0,M_1]$ we have
\[
V(x)=-\frac{\alpha^2}{2x^{2q}}-\frac{\alpha q}{2x^{q+1}}\leq0.
\]
The conclusion is immediate.
\end{proof}

\n The next result we will need follows from the formula (1.1) in the proof of Theorem 1.1 from Section 4.1 in the book by Pinsky (1995). To satisfy the hypotheses he requires, we take the drift to be in $C^3$.

\begin{lem}\label{lem2.5}
Suppose $X_t$ is from \eqref{eq1.1} in the introduction, where $X_0=x>0$ and $b$ given by \eqref{eq1.2} is in $C^3$. Then for $h$ from \eqref{eq2.2} and $V$ from \eqref{eq2.8},
\[
P_x(\tau_0(X)>t)=E_x\left[\exp\left(\int_0^tV(B_s)\,ds\right)h(B_t)I_{\tau_0(B)>t}\right].
\]
\end{lem}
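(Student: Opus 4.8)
The plan is to derive the identity from Girsanov's theorem together with a localization away from the singularity of the drift at $0$; this is, in essence, what formula (1.1) of Pinsky encodes, but the probabilistic route keeps the bookkeeping light. The algebraic core is a rearrangement of the Girsanov exponent: writing $H(x)=\int_0^x b(y)\,dy$, one has $H'=b$ and $H''=b'$ wherever $b\in C^1$, so It\^o's formula applied to $H(B_s)$ gives
\[
\int_0^t b(B_s)\,dB_s=H(B_t)-H(B_0)-\tfrac12\int_0^t b'(B_s)\,ds ,
\]
and therefore
\[
\int_0^t b(B_s)\,dB_s-\tfrac12\int_0^t b(B_s)^2\,ds=\log\frac{h(B_t)}{h(B_0)}+\int_0^t V(B_s)\,ds ,
\]
with $h$ as in \eqref{eq2.2} and $V=-\tfrac12(b^2+b')$ as in \eqref{eq2.8}. (On the operator side this is the conjugation identity $\bigl(\tfrac12\tfrac{d^2}{dx^2}+V\bigr)(h\,u)=h\,Lu$, where $L$ is the generator of $X$, and Feynman--Kac then produces the claim for smooth bounded data; the probabilistic route below avoids that PDE regularity bookkeeping.)

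Since $b$, and possibly $V$, blows up at $0$, Girsanov's theorem cannot be run on $[0,t]$ directly, so I would first localize. Fix $0<\vp<x<M$. On the compact interval $[\vp,M]$ the functions $b$ and $b'$ are bounded, so the stopped exponential $\cl E_{s\wedge\tau_{\vp,M}(B)}$, where $\cl E_s=\exp\bigl(\int_0^s b(B_u)\,dB_u-\tfrac12\int_0^s b(B_u)^2\,du\bigr)$, is a true martingale (Novikov's condition is automatic), and under $dQ=\cl E_{t\wedge\tau_{\vp,M}(B)}\,dP_x$ the path of $B$ stopped at $\tau_{\vp,M}(B)$ has, by pathwise uniqueness of \eqref{eq1.1} up to exit from $[\vp,M]$, the law of the path of $X$ stopped at $\tau_{\vp,M}(X)$. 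Taking the trivial functional and invoking the rearrangement above on the event $\{\tau_{\vp,M}(B)>t\}$ — where the stopping time equals $t$ and all the stochastic integrals are the stopped ones — gives
\[
P_x\bigl(\tau_{\vp,M}(X)>t\bigr)=\frac{1}{h(x)}\,E_x\!\left[\exp\!\left(\int_0^t V(B_s)\,ds\right)h(B_t)\,I_{\tau_{\vp,M}(B)>t}\right].
\]

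It then remains to let $\vp\downarrow0$ and $M\uparrow\infty$. On the left the events $\{\tau_{\vp,M}(X)>t\}$ increase; by path-continuity $\tau_\vp(X)\uparrow\tau_0(X)$, and $X$ does not explode upward (this is contained in the scale-function estimate used to prove Lemma \ref{lem2.2}, or follows by comparison with Brownian motion on $[M_2,\infty)$, where $b$ is negative and bounded), so the left side tends to $P_x(\tau_0(X)>t)$, which is finite by Lemma \ref{lem2.1}. On the right the integrand is nonnegative because $h>0$, and it is $P_x$-a.s.\ finite on $\{\tau_0(B)>t\}$: there $B$ stays in $[\delta,\infty)$ with $\delta:=\min_{0\le s\le t}B_s>0$, and $V$ is bounded on $[\delta,\infty)$ — continuity handles $[\delta,M_1]$ and, since $b\in C^1$, also $[M_1,M_2]$, while on $[M_2,\infty)$ both $b(x)=-\beta x^{-p}$ and $b'(x)=\beta p\,x^{-p-1}$ are bounded — so $\exp\!\bigl(\int_0^t V(B_s)\,ds\bigr)\le e^{t\sup_{[\delta,\infty)}V}$ on $\{\min_{0\le s\le t}B_s\ge\delta\}$. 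Since $\{\tau_{\vp,M}(B)>t\}\uparrow\{\tau_0(B)>t\}$ as well (again $\tau_\vp(B)\uparrow\tau_0(B)$ by path-continuity, and $B$ does not explode), monotone convergence lets the right side pass to the limit, which is thus finite, and one reads off the asserted formula — up to the multiplicative constant $h(x)$ of \eqref{eq2.2}, which the displayed statement omits and which in any case has no bearing on the $t^{-(1-p)/(1+p)}\log(\cdot)$ asymptotics pursued later.

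The hard part will be this passage to the limit, which rests on (a) $X$ genuinely reaching $0$ and not escaping to $+\infty$, already supplied by Lemmas \ref{lem2.1}--\ref{lem2.2}, and (b) the a.s.\ finiteness of the Feynman--Kac weight on $\{\tau_0(B)>t\}$ despite the possible divergence of $V$ at the origin when $\alpha<0$. The remaining ingredients — the It\^o rearrangement, the boundedness of $b$ and $V$ away from $0$, and the validity of Girsanov on $[\vp,M]$ — are routine. Note, finally, that this argument uses only $b\in C^1$; the $C^3$ hypothesis in the statement is needed only if instead one quotes Pinsky's theorem as a black box.
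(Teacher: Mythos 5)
Your proof is correct and reaches the same localized identity and then takes the same monotone-convergence limit as the paper, but you get there by a more elementary, self-contained route. The paper phrases the localized formula as an $h$-transform and quotes formula (1.1) of Pinsky (Section 4.1), which is why it carries the $C^3$ hypothesis; you instead run Girsanov on $[\vp,M]$ and strip the stochastic integral out of the Radon--Nikodym exponent by an It\^o rearrangement of $H(B_s)=\int_0^{B_s}b$, i.e.\ the computation
\[
\int_0^t b(B_s)\,dB_s-\tfrac12\int_0^t b(B_s)^2\,ds=\log\tfrac{h(B_t)}{h(x)}+\int_0^t V(B_s)\,ds ,
\]
which is exactly what Pinsky's formula encodes but costs only $b\in C^1$. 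In fact this is precisely the mechanism the paper itself uses later (Lemma \ref{lem5.2}), except there the drift is only piecewise $C^1$ and the It\^o--Tanaka formula with a local-time term replaces plain It\^o; your version is the smooth special case. The two approaches buy the same thing here; yours avoids the black-box citation and the $C^3$ overhead, at the cost of re-deriving the change of measure, and the paper's approach is shorter given the reference. You are also right that the displayed identity is missing the factor $1/h(x)$ --- the paper's own proof derives $P_x(\tau_0(X)>t)=h(x)^{-1}E_x[\cdots]$ and then states the lemma without the $h(x)^{-1}$; as you observe, this constant is harmless for the $\vp\log(\cdot)$ asymptotics where the formula is used (e.g.\ in the chain leading from \eqref{eq3.1} through Lemmas \ref{lem3.3}--\ref{lem3.7}), but it is a genuine typo in the statement. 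One small point worth making explicit in your finiteness discussion: the passage by monotone convergence does not actually require the right-hand side integrand to be a.s.\ finite in advance --- the integrand is nonnegative and increases in $(\vp^{-1},M)$ because only the indicator depends on $(\vp,M)$, so MCT applies unconditionally and finiteness of the limit is inherited from the left-hand side, which is bounded by $1$. Your a.s.\ finiteness argument on $\{\tau_0(B)>t\}$ is correct but not strictly needed.
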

\begin{proof}
Let $L=\tfrac{1}{2}\tfrac{d^2}{dx^2}+V$. Note $h$ is positive and $L$-harmonic on $(0,\infty)$. The $h$-transform $L^h$ of $L$ is defined by
\[
L^hf=\frac1hL(hf).
\]
It is easy to show that
\[
L^h=\frac{1}{2}\frac{d^2}{dx^2}+b\frac{d}{dx}.
\]
Let $0<\vp<M<\infty$ be such that $x\in(\vp,M)$. Since $V$ is bounded on $[\vp,M]$, the formula (1.1) in the proof of Theorem 1.1 in Section 4.1 of Pinsky (1995) holds: for each continuous $f$ with compact support in $(\vp,M)$,
\begin{align*}
\frac{1}{h(x)}E_x&\left[\exp\left(\int_0^tV(B_s)\,ds\right)(hf)(B_t)I(\tau_{\vp,M}(B)>t)\right]\\
&=E_x\left[\exp\left(\int_0^t\frac{Lh}{h}(X_s)\,ds\right)f(X_t)I(\tau_{\vp,M}(X)>t)\right]\\
&=E_x\left[f(X_t)I(\tau_{\vp,M}(X)>t)\right],
\end{align*}
since $Lh=0$. From this we get
\[
\frac{1}{h(x)}E_x\left[\exp\left(\int_0^tV(B_s)\,ds\right)h(B_t)I(\tau_{\vp,M}(B)>t)\right]=P_x(\tau_{\vp,M}(X)>t).
\]
By Monotone Convergence and Lemma \ref{lem2.1}, we can let $\vp\downarrow 0$ and $M\uparrow \infty$ to get
\[
\frac{1}{h(x)}E_x\left[\exp\left(\int_0^tV(B_s)\,ds\right)h(B_t)I_{\tau_0(B)>t}\right]=P_x(\tau_0(X)>t),
\]
as claimed.
\end{proof}

%%%%%%%%%%%%%%%%%%%%%%%%%%%%%%%%%%%%%%%%%%%%%%%%%%%%%%%%%%%%%%%%%%%%%%%%%%%%%%%%%%%%%%

\section{Lower Bound}\label{sec3}

The main result of this section is the following lower bound.

\begin{thm}\label{thm3.1} Let $X_t$ be as in \eqref{eq1.1}, where $b$ is from \eqref{eq1.2}. Then
\[
\liminf_{t\to\infty}t^{-(1-p)/(1+p)}\log P_x(\tau_0(X)>t)\geq-\,\gamma(p,\beta),
\]
where $\gamma(p,\beta)$ is from \eqref{eq1.4}.

\end{thm}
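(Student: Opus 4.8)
The plan is to reduce to the case of a $C^3$ drift and then exploit the Feynman--Kac/$h$-transform representation of Lemma~\ref{lem2.5}. First I would note that it suffices to prove the bound when $b\in C^3(0,\infty)$ and $\alpha\ge 0$: indeed, given a general $b$ as in \eqref{eq1.2}, one mollifies on $(M_1,M_2)$ and extends to a slightly larger transition window so as to produce a $C^3$ drift $\tilde b$ with $\tilde b\ge b$ and with the \emph{same} parameters $\beta,p$ at infinity (so the constant $\gamma(p,\beta)$ is unchanged), and moreover with $\tilde\alpha\ge 0$ (replacing $-\alpha x^{-q}$ by a nonpositive drift near $0$ only makes $X$ smaller, hence $\tau_0$ smaller); by the SDE comparison theorem, $X_t\le\widetilde X_t$, so $\tau_0(X)\le\tau_0(\widetilde X)$ and thus $P_x(\tau_0(X)>t)\le P_x(\tau_0(\widetilde X)>t)$ --- wait, that is the wrong direction for a lower bound. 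The correct move is the reverse: choose $\tilde b\le b$ with $\tilde b$ in $C^3$, matching $-\beta x^{-p}$ for $x\ge M_2$ and with $\tilde\alpha\ge0$ near the origin, so $\widetilde X_t\le X_t$, hence $\tau_0(\widetilde X)\le\tau_0(X)$ and $P_x(\tau_0(\widetilde X)>t)\le P_x(\tau_0(X)>t)$. So it suffices to prove the lower bound for such a $\tilde b$, which is $C^3$ with $\tilde\alpha\ge0$; rename it $b$.

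Now apply Lemma~\ref{lem2.5}:
\[
P_x(\tau_0(X)>t)=E_x\!\left[\exp\!\left(\int_0^t V(B_s)\,ds\right)h(B_t)\,I_{\tau_0(B)>t}\right].
\]
For a lower bound I would restrict the expectation to a judiciously chosen event of Brownian paths that stay in $(0,\infty)$ up to time $t$, travel out to a level of order $t^{1/(1+p)}$, and keep the potential $\int_0^t V(B_s)\,ds$ controlled from below. By Lemma~\ref{lem2.4}, $V\le VI_{(M_1,\infty)}$, but for the lower bound I need an \emph{upper} bound on $-\int_0^tV(B_s)\,ds$, i.e. a lower bound on $\int_0^t V(B_s)\,ds$; since $V(x)\sim-\tfrac12\beta^2 x^{-2p}$ for large $x$ and $V$ is bounded on compacts away from $0$ and large only near $0$ (where $\alpha\ge0$ makes it $\le0$ but possibly very negative), the key is to force the Brownian path to spend its time where $x$ is of order $t^{1/(1+p)}$, so that $V(B_s)\approx-\tfrac12\beta^2 t^{-2p/(1+p)}$ and $\int_0^tV(B_s)\,ds\approx-\tfrac12\beta^2 t^{(1-2p)/(1+p)}\cdot$(const), and simultaneously $h(B_t)=C\exp(-\tfrac{\beta}{1-p}B_t^{1-p})\approx\exp(-c\,t^{(1-p)/(1+p)})$. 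Balancing the exponent in $h$ against a cost term of the same order $t^{(1-p)/(1+p)}$ coming from the probabilistic cost of this large-deviation strategy (via a Cameron--Martin change of measure tilting $B$ to follow the optimal trajectory, or via a direct small-ball/large-ball estimate) should reproduce $-\gamma(p,\beta)$.

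The cleanest route is a variational one: under the tilting that turns $B$ into the solution of $dY=d\tilde B+b(Y)\,dt$ conditioned to stay positive (an $h$-transform of Brownian motion killed by $V$), the cost of having $\tau_0>t$ is, to logarithmic order, $\inf$ over paths $\varphi:[0,t]\to(0,\infty)$ of $\int_0^t\bigl(\tfrac12\dot\varphi^2+\tfrac12\beta^2\varphi^{-2p}\bigr)ds$, and a scaling $\varphi(s)=t^{1/(1+p)}\psi(s/t)$ converts this to $t^{(1-p)/(1+p)}\inf_\psi\int_0^1(\tfrac12\dot\psi^2+\tfrac12\beta^2\psi^{-2p})ds$; the Euler--Lagrange equation for the reduced functional is integrable (it is essentially the zero-energy trajectory in the potential $-\tfrac12\beta^2\psi^{-2p}$, giving $\dot\psi^2=\beta^2\psi^{-2p}$ up to the energy constant), and evaluating the action on the optimizer yields exactly the Beta-function combination in \eqref{eq1.4}. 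So the concrete steps are: (i) reduce to $C^3$, $\alpha\ge0$ via comparison; (ii) invoke Lemma~\ref{lem2.5}; (iii) lower-bound the expectation by restricting to a tube around the scaled optimal trajectory $\varphi_t$; (iv) estimate the three contributions --- the Girsanov/Cameron--Martin density along $\varphi_t$, the potential integral $\int_0^tV$, and the terminal factor $h(B_t)$ --- each of order $e^{-c\,t^{(1-p)/(1+p)}}$; (v) identify the constant by solving the one-dimensional variational problem and matching to $\gamma(p,\beta)$.

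The main obstacle I expect is (iii)--(iv): making the tube estimate rigorous, i.e. showing that the probability that $B$ stays in a shrinking-relative-width tube around $\varphi_t$ on $[0,t]$ --- long enough to be a genuine large-deviation event with the right rate --- is $\exp(-(1+o(1))\,t^{(1-p)/(1+p)}\,\mathcal{A}^*)$ with $\mathcal{A}^*$ the optimal action, while simultaneously controlling the potential $\int_0^t V(B_s)\,ds$ uniformly on that tube (the danger being excursions of $B$ toward $0$, where $V$ can be very negative if $q$ is close to $1$, although $\alpha\ge0$ keeps $V\le0$ there so these excursions only \emph{help}, and excursions toward $\infty$, where $V$ is harmless but $h$ penalizes). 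Getting the endpoint behavior right --- the path must reach height $\asymp t^{1/(1+p)}$ at time $t$ to make $h(B_t)$ not too small, which is precisely why the reduced functional is on $[0,1]$ with a free right endpoint --- is the delicate bookkeeping, and is presumably where the asymmetric $B(\tfrac12,\cdot)+B(\tfrac32,\cdot)$ structure (as opposed to a single Beta value) enters.
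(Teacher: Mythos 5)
Your plan matches the paper's proof: reduce to $b\in C^3$ with $\alpha\ge0$ via the SDE comparison theorem (choosing $\tilde b\le b$ that agrees with $-\beta x^{-p}$ at infinity), invoke Lemma~\ref{lem2.5}, scale time and space, and apply a Cameron--Martin--Girsanov tilt by the (approximate) optimal profile $\tilde g/\sqrt\vp$ with $g\ge0$ in $K_0$ arbitrary, identifying $\gamma(p,\beta)$ as $\inf F$ over the Cameron--Martin space. The paper restricts to the one-sided event $\tau_0(Z)>1$ (i.e.\ $B_u\ge\tilde g(u)/\sqrt\vp$, exploiting monotonicity of $V$) rather than a two-sided tube, and cites DeBlassie and Smits (2007) for the value of the variational problem instead of re-solving the Euler--Lagrange equation, but these are implementation details rather than a different route.
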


\n We first prove a special case.

\begin{thm}\label{thm3.2} Let $X_t$ be as in \eqref{eq1.1}, where $b$ from \eqref{eq1.2} satisfies $\alpha\geq 0$ and $b\in C^3$. Then
\[
\liminf_{t\to\infty}t^{-(1-p)/(1+p)}\log P_x(\tau_0(X)>t)\geq-\,\gamma(p,\beta),
\]
where $\gamma(p,\beta)$ is from \eqref{eq1.4}.
\end{thm}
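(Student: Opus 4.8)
The plan is to use the representation from Lemma \ref{lem2.5}, namely
\[
P_x(\tau_0(X)>t)=\frac{1}{h(x)}E_x\left[\exp\left(\int_0^tV(B_s)\,ds\right)h(B_t)I_{\tau_0(B)>t}\right],
\]
and bound the right-hand side from below by restricting the expectation to a judiciously chosen set of Brownian paths. Because we only need a logarithmic lower bound, the strategy is a Feynman--Kac / large deviations one: force the Brownian path to travel out to a large level and stay in a region where the potential $V=-\tfrac12(b^2+b')$ is as small (in absolute value) as possible for as long as possible, then estimate the probability of that event and the exponential cost $\int_0^t V(B_s)\,ds$ along it. Since $b(x)=-\beta x^{-p}$ for $x\ge M_2$, out there $V(x)\sim-\tfrac12\beta^2 x^{-2p}$, which is small when $x$ is large; the competition is between paying the Feynman--Kac cost $\int_0^t\tfrac12\beta^2 B_s^{-2p}\,ds$ (cheap if $B_s$ is large) and paying the probabilistic cost of keeping Brownian motion at a high level for time $t$ without hitting $0$ (expensive if the level is too high). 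Optimizing the level as a function of $t$ is exactly what produces the exponent $t^{(1-p)/(1+p)}$ and the constant $\gamma(p,\beta)$.

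Concretely, I would proceed as follows. First, reduce to the region $x\ge M_2$: by Lemma \ref{lem2.4} (using $\alpha\ge0$) we have $V\le VI_{(M_1,\infty)}$, so $\exp(\int_0^t V(B_s)\,ds)$ is not too badly hurt by the part of the path below $M_1$; and $h$ is bounded below on $[M_1,M_2]$ by \eqref{eq2.4} and has the explicit form \eqref{eq2.6} above $M_2$. Next, introduce a scaling parameter and a target level $R=R(t)\to\infty$, to be optimized, and restrict to the event that $B$ reaches $R$ quickly, stays in an interval around $R$ (say $[R,2R]$ or $[R/2,2R]$) for almost all of $[0,t]$, and does not hit $0$. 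On that event, $V(B_s)$ is bounded below by $-c\,\beta^2 R^{-2p}$, so the Feynman--Kac factor is at least $\exp(-c\beta^2 R^{-2p}t)$; the factor $h(B_t)$ is at least of order $\exp(-\tfrac{\beta}{1-p}(2R)^{1-p})$; and the probability of the confinement event decays like $\exp(-c' t R^{-2})$ (the principal Dirichlet eigenvalue cost of keeping Brownian motion in an interval of width $\sim R$ for time $t$). Collecting, the logarithm of the lower bound is at least of order $-t R^{-2p}-tR^{-2}-R^{1-p}$ up to constants. For large $t$, $R^{1-p}$ is negligible compared to $tR^{-2}$ when $R=o(t^{1/(1+p)})$, so the balance is between $tR^{-2p}$ and $tR^{-2}$; choosing $R\asymp t^{1/(1+p)}$ makes both of order $t^{(1-p)/(1+p)}$, giving the correct rate. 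Getting the sharp constant $\gamma(p,\beta)$ from \eqref{eq1.4} requires more care than the crude confinement bound above: one should instead use a sharp small-deviation estimate, e.g. optimize over a family of $h$-transformed (or tilted) measures, or directly invoke the known precise asymptotics for $P_x(\tau_0(B)>t,\ B\in\cdot)$ together with a Laplace-type evaluation of $E_x[\exp(\tfrac12\beta^2\int_0^t B_s^{-2p}\,ds)\cdots]$, so that the Beta-function combination in \eqref{eq1.4} emerges from the variational problem $\inf\{\tfrac12\int_0^1(\dot\phi)^2+\tfrac12\beta^2\phi^{-2p}\,ds\}$ after the parabolic rescaling $\phi(s)=t^{-1/(1+p)}B_{ts}$.

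The main obstacle is pinning down the exact constant rather than the rate. The rate argument is robust and only uses soft confinement estimates, but $\gamma(p,\beta)$ is the value of a genuine variational/rate-function problem, and matching the closed-form Beta-function expression in \eqref{eq1.4} means one must (i) set up the correct rescaled large-deviation functional for the path, with the drift contributing the potential term $\tfrac12\beta^2\phi^{-2p}$ and the constraint $\phi>0$, (ii) solve or recognize its Euler--Lagrange equation explicitly, and (iii) verify that the lower bound from a near-optimal deterministic path $\phi^*$ can actually be realized by Brownian motion with the claimed exponential probability (a Schilder-type or Girsanov argument, handled carefully near the boundary where $\phi^*\to0$ at the endpoints). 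I expect steps (ii)--(iii) to be where the real work lies; the reduction via Lemma \ref{lem2.5} and Lemma \ref{lem2.4} and the removal of the bounded-drift middle region are comparatively routine. Once Theorem \ref{thm3.2} is in hand, Theorem \ref{thm3.1} will follow by the comparison/approximation device already used in the proof of Lemma \ref{lem2.1}, smoothing $b$ and adjusting $\alpha$ upward to be nonnegative and $C^3$ while only decreasing the drift, hence only decreasing $P_x(\tau_0>t)$.
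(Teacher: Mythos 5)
Your high-level plan matches the paper's strategy: start from the Feynman--Kac representation of Lemma \ref{lem2.5}, rescale time so that $\vp=t^{-(1-p)/(1+p)}$, tilt the Brownian path toward a favorable profile, and identify the resulting variational cost with $\gamma(p,\beta)$. You also correctly anticipate that Theorem \ref{thm3.1} then follows from Theorem \ref{thm3.2} by a comparison/smoothing device. But there are two concrete gaps.

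First, the role you assign to Lemma \ref{lem2.4} is the wrong way around. That lemma says $V\leq V\,I_{(M_1,\infty)}$; this is an \emph{upper} bound on $V$, which is what is used in the upper-bound Section \ref{sec4}. For a \emph{lower} bound on $E_x\bigl[\exp\bigl(\int_0^t V(B_s)\,ds\bigr)\cdots\bigr]$ you need $V$ bounded from \emph{below} along the path. The paper achieves this not via Lemma \ref{lem2.4} but by observing that $V\bigl[I_{(0,M_1)}+I_{(M_2,\infty)}\bigr]$ is nonpositive and increasing, and by applying the Cameron--Martin shift $Z_u=B_u-\tilde g(u)/\sqrt\vp$ so that on $\{\tau_0(Z)>1\}$ one has $B_u\geq\tilde g(u)/\sqrt\vp$, hence $V(\sqrt tB_u)\geq V(\sqrt{t/\vp}\,\tilde g(u))$ on the two explicit regions (Lemma \ref{lem3.3}). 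The bounded middle region $[M_1,M_2]$ and the boundary weight $h(\sqrt tB_1)$ then need their own estimates (Lemmas \ref{lem3.4}--\ref{lem3.7}), involving the Girsanov density $\exp\bigl(-\tfrac1{\sqrt\vp}\int_0^1\tilde g'\,dB-\tfrac1{2\vp}\int_0^1(\tilde g')^2\bigr)$, exponential Markov bounds, and the small-ball asymptotic $P_{x/\sqrt t}(\tau_0(B)>1)\sim C\vp^{(1+p)/2(1-p)}$. None of this appears in the proposal; the crude $[R,2R]$-confinement heuristic you describe indeed yields only the rate, as you note, and the sharper program you outline in steps (ii)--(iii) is exactly the missing content.

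Second, you propose to "solve or recognize the Euler--Lagrange equation" for the variational functional. The paper avoids this: it establishes, for each fixed $g\geq0$ in $K_0$, the pathwise lower bound $-F(g)$ where $F$ is \eqref{eq3.2}, and then cites DeBlassie and Smits (2007) for the identity $\inf_{\omega\in K_0,\ \omega\geq0}F(\omega)=\gamma(p,\beta)$. So the closed-form Beta-function value is imported, not rederived. What cannot be imported, and what constitutes the real proof, is the chain of estimates reducing the Feynman--Kac expectation to $-F(g)$ up to $o(1/\vp)$ errors; that is the part your sketch leaves open.
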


\n The proof of the theorem is long, so we break it up into pieces. By Lemma \ref{lem2.5}
\begin{equation}\label{eq3.1}
P_x(\tau_0(X)>t)=E_x\left[\exp\left(\int_0^tV(B_s)\,ds\right)h(B_t)I_{\tau_0(B)>t}\right].
\end{equation}
Define
\[
K_0=\{\omega\in C_0:\int_0^1|\dot\omega_u|^2\,du<\infty\}
\]
and
\begin{equation}\label{eq3.2}
F(\omega)=\frac{\beta^2}{2}\int_0^1|\omega_u|^{-2p}\,du+\frac{\beta}{1-p}|\omega_1|^{1-p}+\frac12\int_0^1\dot\omega_u^2\,du,\quad \omega\in K_0.
\end{equation}
Let $g\in K_0$ with $g\geq0$. Let $\delta>0$ be given and set $\tilde g=g+\delta$,
\begin{equation}\label{eq3.3}
\vp=t^{-(1-p)/(1+p)},
\end{equation}
and
\begin{equation}\label{eq3.4}
Z_u=B_u-\tilde g(u)/\sqrt\vp.
\end{equation}
For $Z_0>0$, we have
\begin{equation}\label{eq3.5}
\tau_0(Z)>1\Longrightarrow \tau_0(B)>1
\end{equation}
\begin{equation}\label{eq3.6}
\tau_0(Z)>1\Longrightarrow B_u\geq\tilde g(u)/\sqrt\vp,\,u\in[0,1].
\end{equation}
Now scale and use \eqref{eq3.5}:
\begin{align}\label{eq3.7}
P_x(\tau_0(X)>t)&=E_x\left[\exp\left(\vphantom{\int_0^t}\right.\right.\left.\left.\int_0^tV(B_s)\,ds\right)h(B_t)I_{\tau_0(B)>t}\right]\notag\\
&=E_{x/\sqrt t}\left[\exp\left(t\int_0^1V(\sqrt tB_u)\,du\right)h(\sqrt tB_1)I_{\tau_0(B)>1}\right]\notag\\
&\geq E_{x/\sqrt t}\left[\exp\left(t\int_0^1V(\sqrt tB_u)\,du\right)h(\sqrt tB_1)I_{\tau_0(Z)>1}\right].
\end{align}
\begin{lem}\label{lem3.3}
For $\tau_0(Z)>1$ and $\vp$ from \eqref{eq3.3},
\[
t\int_0^1V(\sqrt tB_u)\left[I_{\sqrt tB_u<M_1}+I_{M_2<\sqrt tB_u}\right]\,du\geq G_1(\tilde g,\vp),
\]
where
\[
\lim_{\vp\to 0}\vp\, G_1(\tilde g,\vp)= -\frac{\beta^2}{2}\int_0^1\tilde g(u)^{-2p}\,du.
\]
\end{lem}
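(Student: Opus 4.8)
\textbf{Proof proposal for Lemma \ref{lem3.3}.}
The plan is to use the pathwise constraint \eqref{eq3.6}: on $\{\tau_0(Z)>1\}$ we have $B_u\ge\tilde g(u)/\sqrt\vp\ge\delta/\sqrt\vp$ for every $u\in[0,1]$. Since $\vp=t^{-(1-p)/(1+p)}$ by \eqref{eq3.3}, a direct exponent computation gives $\sqrt t\,B_u\ge\tilde g(u)\,t^{1/(1+p)}\ge\delta\,t^{1/(1+p)}$, uniformly in $u\in[0,1]$ and in $\omega\in\{\tau_0(Z)>1\}$. Hence there is a threshold $t_0=t_0(\delta,M_1,M_2)$, \emph{not depending on $\omega$}, so that for all $t\ge t_0$ and all $\omega$ with $\tau_0(Z)>1$ one has $\sqrt t\,B_u>M_2$ for every $u\in[0,1]$. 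Consequently, for $t\ge t_0$ the indicator $I_{\sqrt t B_u<M_1}$ vanishes identically and $I_{M_2<\sqrt t B_u}\equiv1$ on $[0,1]$, so the quantity to be bounded equals $t\int_0^1 V(\sqrt t B_u)\,du$ with the argument of $V$ always in $[M_2,\infty)$.

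Next I would insert the explicit form of $V$ there. On $[M_2,\infty)$, $b(x)=-\beta x^{-p}$, so by \eqref{eq2.8} $V(x)=-\tfrac12\bigl(\beta^2 x^{-2p}+\beta p\,x^{-p-1}\bigr)$, and both terms are negative, so replacing $B_u$ by its lower bound $\tilde g(u)/\sqrt\vp$ in each term yields a genuine \emph{lower} bound. This gives, for $t\ge t_0$,
\[
t\int_0^1 V(\sqrt t B_u)\,du\ \ge\ -\frac{\beta^2}{2}\,t\int_0^1\Bigl(\tfrac{\sqrt t\,\tilde g(u)}{\sqrt\vp}\Bigr)^{-2p}du-\frac{\beta p}{2}\,t\int_0^1\Bigl(\tfrac{\sqrt t\,\tilde g(u)}{\sqrt\vp}\Bigr)^{-p-1}du .
\]
For the first integrand, $t\,t^{-p}\vp^{p}\tilde g(u)^{-2p}=\vp^{-1}\tilde g(u)^{-2p}$ using $1-p-\tfrac{p(1-p)}{1+p}=\tfrac{1-p}{1+p}$; for the second, $t\,t^{-(p+1)/2}\vp^{(p+1)/2}\tilde g(u)^{-p-1}=\tilde g(u)^{-p-1}$ since $\tfrac{1-p}{2}-\tfrac{(p+1)(1-p)}{2(1+p)}=0$. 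Hence for $t\ge t_0$,
\[
t\int_0^1 V(\sqrt t B_u)\bigl[I_{\sqrt t B_u<M_1}+I_{M_2<\sqrt t B_u}\bigr]\,du\ \ge\ -\frac{\beta^2}{2\vp}\int_0^1\tilde g(u)^{-2p}\,du-\frac{\beta p}{2}\int_0^1\tilde g(u)^{-p-1}\,du .
\]

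I would take the right-hand side (finite since $\tilde g\ge\delta>0$) as $G_1(\tilde g,\vp)$ for $\vp\le\vp_0:=t_0^{-(1-p)/(1+p)}$, extending it to any crude lower bound (e.g.\ $-\infty$) for the finitely many larger $\vp$, which does not affect the limit. Then
\[
\vp\,G_1(\tilde g,\vp)=-\frac{\beta^2}{2}\int_0^1\tilde g(u)^{-2p}\,du-\vp\,\frac{\beta p}{2}\int_0^1\tilde g(u)^{-p-1}\,du\ \longrightarrow\ -\frac{\beta^2}{2}\int_0^1\tilde g(u)^{-2p}\,du
\]
as $\vp\to0$, which is the assertion. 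The only delicate point is the uniformity of $t_0$ in $\omega$: it is essential that the bound $\sqrt t B_u\ge\delta t^{1/(1+p)}$, and hence the time after which the argument of $V$ stays in $[M_2,\infty)$, is path-independent, so that the displayed estimate holds pointwise on $\{\tau_0(Z)>1\}$ and not merely in expectation. Everything else is bookkeeping of powers of $t$ through $\vp=t^{-(1-p)/(1+p)}$, together with the sign $V\le0$ on $[M_2,\infty)$ and on $(0,M_1]$ (the latter from $\alpha\ge0$, cf.\ Lemma \ref{lem2.4}).
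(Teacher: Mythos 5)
Your proof is correct and is essentially the paper's argument: both rest on the pathwise bound \eqref{eq3.6}, the monotonicity of the negative power terms, and the same scaling bookkeeping via $\vp=t^{-(1-p)/(1+p)}$, arriving at the identical dominant term $-\tfrac{\beta^2}{2\vp}\int_0^1\tilde g(u)^{-2p}\,du$. The only (cosmetic) difference is that you discard the $(0,M_1)$ region at the outset, using $\sqrt t\,B_u\ge\delta\,t^{1/(1+p)}>M_2$ for $t$ large, whereas the paper keeps the $\alpha$-terms inside $G_1$ multiplied by $I_{\tilde g(u)<M_1\vp^{1/(1-p)}}$ and observes they vanish for small $\vp$ because $\tilde g\ge\delta$ --- the same mechanism applied one step later, which has the minor advantage that $G_1$ is given by a single finite formula for all $\vp$.
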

\begin{proof}
By \eqref{eq2.8},
\[
V(x)\left[I_{0<x<M_1}+I_{M_2<x}\right]=-\left(\frac{\alpha^2}{2x^{2q}}+\frac{q\alpha}{2x^{q+1}}\right)I_{0<x<M_1}-\left(\frac{\beta^2}{2x^{2p}}+\frac{p\beta}{2x^{p+1}}\right)I_{M_2<x}.
\]
This is nonpositive and increasing, since $\alpha\geq 0$ and $\beta>0$. By \eqref{eq3.6}, for $u\in[0,1]$, $B_u\geq\tfrac{\tilde g(u)}{\sqrt\vp}$, and so
$V\left(\sqrt tB_u\right)\geq V\left(\sqrt{\tfrac{t}{\vp}}\,\tilde g(u)\right)$. Thus for $\tau_0(Z)>1$,
\begin{align}\label{eq3.8}
t\int_0^1V(\sqrt tB_u)&\left[I_{\sqrt tB_u<M_1}+I_{M_2<\sqrt tB_u}\right]\,du\geq t\int_0^1V\left(\sqrt{\tfrac{t}{\vp}}\,\tilde g(u)\right)\left[I_{\sqrt tB_u<M_1}+I_{M_2<\sqrt tB_u}\right]\,du\notag\\
&=-t\int_0^1\left[\frac{\alpha^2}{2}\left(\sqrt{\tfrac{t}{\vp}}\,\tilde g(u)\right)^{-2q}+\frac{q\alpha}{2}\left(\sqrt{\tfrac{t}{\vp}}\,\tilde g(u)\right)^{-q-1}\right]I_{\sqrt tB_u<M_1}\,du\notag\\
&\qquad-t\int_0^1\left[\frac{\beta^2}{2}\left(\sqrt{\tfrac{t}{\vp}}\,\tilde g(u)\right)^{-2p}+\frac{p\beta}{2}\left(\sqrt{\tfrac{t}{\vp}}\,\tilde g(u)\right)^{-p-1}\right]I_{M_2<\sqrt tB_u}\,du\notag\\
&\geq-t\int_0^1\left[\frac{\alpha^2}{2}\left(\sqrt{\tfrac{t}{\vp}}\,\tilde g(u)\right)^{-2q}+\frac{q\alpha}{2}\left(\sqrt{\tfrac{t}{\vp}}\,\tilde g(u)\right)^{-q-1}\right]I_{\sqrt {t/\vp}\tilde g(u)<M_1}\,du\notag\\
&\qquad-t\int_0^1\left[\frac{\beta^2}{2}\left(\sqrt{\tfrac{t}{\vp}}\,\tilde g(u)\right)^{-2p}+\frac{p\beta}{2}\left(\sqrt{\tfrac{t}{\vp}}\,\tilde g(u)\right)^{-p-1}\right]\,du\notag\\
\intertext{(since $I_{M_2<\sqrt tB_u}\leq 1$ and since $\tau_0(Z)>1$ implies $I_{\sqrt tB_u<M_1}=I_{\sqrt t(Z_u+\tilde g(u)/\sqrt \vp)<M_1}\leq I_{\sqrt {t/\vp}\tilde g(u)<M_1}$ for $u\in[0,1]$)}
&=-\frac12\int_0^1\left[\alpha^2\vp^{(2q-p-1)/(1-p)}\tilde g(u)^{-2q}+q\alpha\vp^{(q-p)/(1-p)}\tilde g(u)^{-q-1}\right]I_{\tilde g(u)<M_1\vp^{1/(1-p)}}\,du\notag\\
&\qquad-\frac12\int_0^1\left[\beta^2\vp^{-1}\tilde g(u)^{-2p}+p\beta\tilde g(u)^{-p-1}\right]\,du,
\end{align}
where we have used \eqref{eq3.3} to replace the variable $t$ in terms of the variable $\vp$.
Define  
\[
G_1(\tilde g,\vp)=\text{RHS\eqref{eq3.8}.}
\]
Then
\begin{align}\label{eq3.9}
\vp\,G_1(\tilde g,\vp)&=-\frac12\int_0^1\left[\alpha^2\vp^{2(q-p)/(1-p)}\tilde g(u)^{-2q}+q\alpha\vp^{(1+q-2p)/(1-p)}\tilde g(u)^{-q-1}\right]I_{\tilde g(u)<M_1\vp^{1/(1-p)}}\,du\notag\\
&\qquad-\frac12\int_0^1\left[\beta^2\tilde g(u)^{-2p}+p\beta\vp\tilde g(u)^{-p-1}\right]\,du.
\end{align}
Now if $\vp<(\delta/M_1)^{1-p}$, then for any $u\in[0,1]$, $\tilde g(u)\geq \delta >M_1\vp^{1/(1-p)}$. Thus the first integral in \eqref{eq3.9} is $0$. The integrand in the second integral is bounded because $\tilde g\geq\delta$ and $p>0$. It follows that
\[
\lim_{\vp\to0}\vp\,G_1(\tilde g,\vp)=-\frac12\int_0^1\beta^2\tilde g(u)^{-2p}\,du,
\]
as claimed.
\end{proof} 

\n Having studied 
\[
t\int_0^1V(\sqrt tB_u)\left[I_{\sqrt tB_u<M_1}+I_{M_2<\sqrt tB_u}\right]\,du,
\] 
we now turn attention to 
\[t\int_0^1V(\sqrt tB_u)\left[I_{M_1\leq\sqrt tB_u\leq M_2}\right]\,du.
\]
\begin{lem}\label{lem3.4}
For any positive $C_1$ and $\gamma$, as $\vp\to 0$,
\begin{align*}
P_{x/\sqrt t}&\left(C_1t\int_0^1I_{\sqrt tB_u+\sqrt{t/\vp}\,\tilde g(u)<M_2}\,du+\frac{\beta}{1-p}\left(\sqrt tB_1\right)^{1-p}+\frac{1}{\sqrt\vp}\int_0^1\tilde g'(u)\,dB_u>\frac\gamma\vp,\,\tau_0(B)>1\right)\\
&\qquad=o\left(P_{x/\sqrt t}\left(\tau_0(B)>1\right)\right).
\end{align*}
\end{lem}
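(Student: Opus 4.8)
The plan is to show that, for all large $t$ and on $\{\tau_0(B)>1\}$, each of the three quantities in the event can reach the level $\gamma/\vp$ only on a large deviation event of probability $\exp(-c/\vp)$, which is super-polynomially small and hence negligible beside $P_{x/\sqrt t}(\tau_0(B)>1)\asymp t^{-1/2}$, the latter from the Feller formula quoted in Section~\ref{sec1}.

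\emph{Step 1: the first term vanishes.} On $\{\tau_0(B)>1\}$ we have $B_u>0$ for every $u\in[0,1]$ (since $B_0=x/\sqrt t>0$ and $B$ has not hit $0$), so $\sqrt tB_u+\sqrt{t/\vp}\,\tilde g(u)\geq\sqrt{t/\vp}\,\delta=\delta\,t^{1/(1+p)}$, using $g\geq0$, $\tilde g=g+\delta$, and $\vp=t^{-(1-p)/(1+p)}$. Once $t\geq(M_2/\delta)^{1+p}$ this lower bound is $\geq M_2$ for all $u$, so $I_{\sqrt tB_u+\sqrt{t/\vp}\tilde g(u)<M_2}\equiv0$ and the first term is identically $0$ there. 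Consequently, for such $t$, the event in the lemma intersected with $\{\tau_0(B)>1\}$ is contained in $A_2\cup A_3$, where $A_2=\{\tfrac{\beta}{1-p}(\sqrt tB_1)^{1-p}>\gamma/(2\vp)\}$ and $A_3=\{\tfrac1{\sqrt\vp}\int_0^1\tilde g'(u)\,dB_u>\gamma/(2\vp)\}$; this is just the splitting $a+b>c\Rightarrow a>c/2$ or $b>c/2$. It therefore suffices to bound $P_{x/\sqrt t}(A_2)$ and $P_{x/\sqrt t}(A_3)$ (the constraint $\{\tau_0(B)>1\}$ only lowers the probabilities, so it may be dropped).

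\emph{Step 2: the two Gaussian tails.} On $A_2$ the inequality rearranges to $B_1>c_0\,\vp^{-1/(1-p)}t^{-1/2}=c_0\,t^{(1-p)/(2(1+p))}$ for an explicit $c_0=c_0(p,\beta,\gamma)>0$, and since $B_1\sim N(x/\sqrt t,1)$ under $P_{x/\sqrt t}$ with $x/\sqrt t\to0$, and $\bigl(t^{(1-p)/(2(1+p))}\bigr)^2=\vp^{-1}$, the standard Gaussian tail bound gives $P_{x/\sqrt t}(A_2)\leq\exp(-c_1/\vp)$ for $t$ large. For $A_3$, observe that $\tilde g'=g'\in L^2[0,1]$ (because $g\in K_0$), so $\int_0^1\tilde g'(u)\,dB_u$ is a mean-zero Gaussian of the fixed variance $\|g'\|_2^2$ (a Wiener integral; the constant drift $x/\sqrt t$ does not contribute). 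Hence $A_3=\bigl\{\int_0^1\tilde g'(u)\,dB_u>\tfrac{\gamma}{2}\vp^{-1/2}\bigr\}$ has probability at most $\exp(-c_2/\vp)$ (and probability $0$ if $g'\equiv0$).

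\emph{Step 3: comparison.} Combining, the probability in the lemma is at most $2\exp(-c_3/\vp)$ with $c_3=\min(c_1,c_2)>0$ for all large $t$, while $P_{x/\sqrt t}(\tau_0(B)>1)=\tfrac{2}{\sqrt{2\pi}}\int_0^{x/\sqrt t}e^{-u^2/2}\,du\geq c_4\,t^{-1/2}$ for $t$ large. Since $\vp^{-1}=t^{(1-p)/(1+p)}$ is a positive power of $t$, the quotient $2\exp(-c_3/\vp)/(c_4 t^{-1/2})\to0$, which proves the claim. I expect the only point requiring care to be the exponent bookkeeping in Step 2 --- verifying that forcing $(\sqrt tB_1)^{1-p}$, or the Wiener integral, up to order $\vp^{-1}$ costs exactly a Gaussian factor $\exp(-c\vp^{-1})$, which is precisely the scale that beats the polynomial $t^{-1/2}$; everything else reduces to a union bound and standard Gaussian estimates.
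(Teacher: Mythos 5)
Your argument is correct and follows essentially the same route as the paper: split the event by a union bound, show each piece is superpolynomially small (of order $e^{-c/\vp^a}$ for some $a>0$), and compare against the Feller formula $P_{x/\sqrt t}(\tau_0(B)>1)\sim C\vp^{(1+p)/2(1-p)}$. The only cosmetic differences are that you observe the first term is \emph{identically} zero on $\{\tau_0(B)>1\}$ for large $t$ (the paper reaches the same fact inside an exponential Markov bound and then reduces to a three-way split), and that you use direct Gaussian tail estimates rather than Markov with $e^{B_1}$ and $e^{\int\tilde g'\,dB}$, which yields the slightly sharper (but not needed) rate $e^{-c/\vp}$ in place of the paper's $e^{-c/\sqrt\vp}$ for the Wiener-integral term.
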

\begin{proof}
Since $\tau_0(B)>1$ implies $B_u>0$ for $u\in[0,1]$,
\begin{align}\label{eq3.10}
P_{x/\sqrt t}&\left(C_1t\int_0^1I_{\sqrt tB_u+\sqrt{t/\vp}\,\tilde g(u)<M_2}\,du>\frac\gamma{3\vp},\,\tau_0(B)>1\right)\notag\\
&\leq e^{-\gamma/3\vp}E_{x/\sqrt t}\left[\exp\left(C_1t\int_0^1I_{\sqrt tB_u+\sqrt{t/\vp}\,\tilde g(u)<M_2}\,du\right)I_{\tau_0(B)>1}\right]\notag\\
&\leq e^{-\gamma/3\vp}E_{x/\sqrt t}\left[\exp\left(C_1t\int_0^1I_{\sqrt{t/\vp}\,\tilde g(u)<M_2}\,du\right)I_{\tau_0(B)>1}\right]\notag\\
&\leq e^{-\gamma/3\vp}\exp\left(C_1t\int_0^1I_{\sqrt{t/\vp}\,\delta<M_2}\,du\right)P_{x/\sqrt t}(\tau_0(B)>1),
\end{align}
(since $\tilde g\geq\delta$).
But by \eqref{eq3.3}, 
\[
\dfrac t\vp=\vp^{-(1+p)/(1-p)-1}\to\infty\text{ as } \vp\to 0,
\]
so for small $\vp$, $I_{\sqrt{t/\vp}\,\delta<M_2}=0$ and we get
\begin{equation}\label{eq3.11}
P_{x/\sqrt t}\left(C_1t\int_0^1I_{\sqrt tB_u+\sqrt{t/\vp}\,\tilde g(u)<M_2}\,du>\frac\gamma{3\vp},\,\tau_0(B)>1\right)\leq e^{-\gamma/3\vp}.
\end{equation}
Next, for some constant $C$ (whose exact value might change from line to line), independent of $\vp$,
\begin{align}\label{eq3.12}
P_{x/\sqrt t}&\left(\frac{\beta}{1-p}\left(\sqrt tB_1\right)^{1-p}>\frac\gamma{3\vp}\right)=P_{x/\sqrt t}\left(B_1>C\left(\frac\gamma{\vp}\right)^{1/(1-p)}t^{-1/2}\right)\notag\\
&\leq \exp\left(-C\vp^{-1/(1-p)}t^{-1/2}\right)E_{x/\sqrt t}\left[e^{B_1}\right]\notag\\
&=\exp\left(-C\vp^{-1/2}+x\vp^{(1+p)/2(1-p)}+1/2\right),
\end{align}
using \eqref{eq3.3} to write $t$ in terms of $\vp$. 

\n Finally,
\begin{align}\label{eq3.13}
P_{x/\sqrt t}&\left(\frac{1}{\sqrt\vp}\int_0^1\tilde g'(u)\,dB_u>\frac\gamma{3\vp}\right)\leq e^{-\gamma/3\sqrt\vp}\,E_{x/\sqrt t}\left[\exp\left(\int_0^1\tilde g'(u)\,dB_u\right)\right]\notag\\
&=e^{-\gamma/3\sqrt\vp}\,\exp\left(\frac12\int_0^1\tilde g'(u)^2\,du\right),
\end{align}
since under $P_{x/\sqrt t}$, $\int_0^1\tilde g'(u)\,dB_u$ is normal with mean $0$ and variance $\int_0^1\tilde g'(u)^2\,du$. Combining \eqref{eq3.11}--\eqref{eq3.13}, we get
\begin{align}\label{eq3.14}
P_{x/\sqrt t}&\left(C_1t\int_0^1I_{\sqrt tB_u+\sqrt{t/\vp}\,\tilde g(u)<M_2}\,du+\frac{\beta}{1-p}\left(\sqrt tB_1\right)^{1-p}+\frac{1}{\sqrt\vp}\int_0^1\tilde g'(u)\,dB_u>\frac\gamma\vp,\,\tau_0(B)>1\right)\notag\\
&\leq P_{x/\sqrt t}\left(C_1t\int_0^1I_{\sqrt tB_u+\sqrt{t/\vp}\,\tilde g(u)<M_2}\,du>\frac{\gamma}{3\vp},\,\tau_0(B)>1\right)\notag\\
&\qquad +P_{x/\sqrt t}\left(\frac{\beta}{1-p}\left(\sqrt tB_1\right)^{1-p}>\frac{\gamma}{3\vp},\,\tau_0(B)>1\right)\notag\\
&\qquad\qquad +P_{x/\sqrt t}\left(\frac{1}{\sqrt\vp}\int_0^1\tilde g'(u)\,dB_u>\frac{\gamma}{3\vp},\,\tau_0(B)>1\right)\notag\\
&\leq e^{-\gamma/3\vp}+C_2\exp\left(-C\vp^{-1/2}+x\vp^{(1+p)/2(1-p)}\right)+C_3e^{-\gamma/3\sqrt\vp},
\end{align}
where the constants are independent of $\vp$. From Feller (1971), as $t\to\infty$ (or equivalently, as $\vp\to 0$)
\begin{align}\label{eq3.15}
P_{x/\sqrt t}&\left(\tau_0(B)>1\right)=\frac{2}{\sqrt{2\pi}}\,\int_0^{x/\sqrt t}e^{-u^2/2}\,du\sim\frac{2}{\sqrt{2\pi}}\frac{x}{\sqrt t}\notag\\
&=C_4\,\vp^{(1+p)/2(1-p)},
\end{align}
where $C_4$ is independent of $\vp$ and we have used \eqref{eq3.3} to write $t$ in terms of $\vp$. Combining \eqref{eq3.14}--\eqref{eq3.15} yields the conclusion of the Lemma.
\end{proof}
\begin{lem}\label{lem3.5}
For any $C_1>0$ and $\gamma>0$, 
\begin{align*}
\lim_{\vp\to 0}\vp\,\log P_{x/\sqrt t}&\left(C_1t\int_0^1I_{\sqrt tB_u+\sqrt{t/\vp}\tilde g(u)<M_2}\,du+\frac{\beta}{1-p}\left(\sqrt tB_1\right)^{1-p}\right.\\
&\left. \qquad +\frac{1}{\sqrt\vp}\int_0^1\tilde g'(u)\,dB_u\leq\frac{\gamma}{\vp},\tau_0(B)>1\right)=0.
\end{align*}
\end{lem}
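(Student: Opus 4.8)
The plan is to obtain this as an immediate consequence of Lemma \ref{lem3.4} and the exact small-ball asymptotics \eqref{eq3.15}; no new estimation is needed. First I would write $A_\vp$ for the event
\[
\left\{C_1t\int_0^1I_{\sqrt tB_u+\sqrt{t/\vp}\,\tilde g(u)<M_2}\,du+\frac{\beta}{1-p}\left(\sqrt tB_1\right)^{1-p}+\frac{1}{\sqrt\vp}\int_0^1\tilde g'(u)\,dB_u>\frac{\gamma}{\vp}\right\},
\]
so that Lemma \ref{lem3.4} says $P_{x/\sqrt t}\big(A_\vp\cap\{\tau_0(B)>1\}\big)=o\big(P_{x/\sqrt t}(\tau_0(B)>1)\big)$ as $\vp\to0$, while the event in the present lemma is precisely $A_\vp^c\cap\{\tau_0(B)>1\}$.

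Next I would split $\{\tau_0(B)>1\}$ into the disjoint union of $A_\vp\cap\{\tau_0(B)>1\}$ and $A_\vp^c\cap\{\tau_0(B)>1\}$, which gives
\[
P_{x/\sqrt t}\big(A_\vp^c\cap\{\tau_0(B)>1\}\big)=P_{x/\sqrt t}(\tau_0(B)>1)-P_{x/\sqrt t}\big(A_\vp\cap\{\tau_0(B)>1\}\big).
\]
By Lemma \ref{lem3.4} the subtracted term is $o\big(P_{x/\sqrt t}(\tau_0(B)>1)\big)$, hence
\[
P_{x/\sqrt t}\big(A_\vp^c\cap\{\tau_0(B)>1\}\big)=P_{x/\sqrt t}(\tau_0(B)>1)\,(1+o(1)),
\]
and then \eqref{eq3.15}, namely $P_{x/\sqrt t}(\tau_0(B)>1)\sim C_4\,\vp^{(1+p)/2(1-p)}$ with $C_4>0$ fixed, yields
\[
P_{x/\sqrt t}\big(A_\vp^c\cap\{\tau_0(B)>1\}\big)\sim C_4\,\vp^{(1+p)/2(1-p)}.
\]
In particular this probability is strictly positive for all small $\vp$, so its logarithm is well defined.

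Finally I would take logarithms and multiply by $\vp$:
\[
\vp\log P_{x/\sqrt t}\big(A_\vp^c\cap\{\tau_0(B)>1\}\big)=\vp\log C_4+\frac{1+p}{2(1-p)}\,\vp\log\vp+o(\vp)\longrightarrow 0\quad(\vp\to0),
\]
since $\vp\log\vp\to0$, which is the assertion. There is no genuine obstacle here; the only point worth flagging is that Lemma \ref{lem3.4} is stated as a $o(\cdot)$ relative to $P_{x/\sqrt t}(\tau_0(B)>1)$ rather than as an absolute bound, and it is exactly this relative form that forces the complementary probability to be asymptotic to $P_{x/\sqrt t}(\tau_0(B)>1)$, so that the polynomially small prefactor washes out on the $\vp\log(\cdot)$ scale.
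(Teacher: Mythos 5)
Your proposal is correct and takes essentially the same route as the paper: split $\{\tau_0(B)>1\}$ into the event from Lemma \ref{lem3.4} and its complement, conclude the complement has probability asymptotic to $P_{x/\sqrt t}(\tau_0(B)>1)$, and invoke the polynomial asymptotic \eqref{eq3.15} so that $\vp\log(\cdot)\to 0$. The only difference is that you spell out the final $\vp\log\vp\to0$ computation, which the paper leaves implicit.
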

\begin{proof}
Write
\[
M=C_1t\int_0^1I_{\sqrt tB_u+\sqrt{t/\vp}\tilde g(u)<M_2}\,du+\frac{\beta}{1-p}\left(\sqrt tB_1\right)^{1-p}+\frac{1}{\sqrt\vp}\int_0^1\tilde g'(u)\,dB_u.
\]
Then we want to show
\begin{equation}\label{eq3.16}
\lim_{\vp\to 0}\vp\,\log P_{x/\sqrt t}\left(M\leq\frac\gamma\vp,\,\tau_0(B)>1\right)=0.
\end{equation}
Notice
\begin{align*}
P_{x/\sqrt t}\left(M\leq\frac\gamma\vp,\right.&\left.\,\tau_0(B)>1\vphantom{\frac\gamma\vp}\right)=P_{x/\sqrt t}\left(\tau_0(B)>1\right)-P_{x/\sqrt t}\left(M>\frac\gamma\vp,\,\tau_0(B)>1\right)\\
&=P_{x/\sqrt t}\left(\tau_0(B)>1\right)\left[1-P_{x/\sqrt t}\left(M>\frac\gamma\vp,\,\tau_0(B)>1\right)\bigg\slash P_{x/\sqrt t}\left(\tau_0(B)>1\right)\right].
\end{align*}
It follows that
\begin{align*}
\vp\,\log P_{x/\sqrt t}&\left(M\leq\frac\gamma\vp,\,\tau_0(B)>1\right)=\vp\,\log P_{x/\sqrt t}\left(\tau_0(B)>1\right)\\
&+\vp\,\log\left[1-P_{x/\sqrt t}\left(M>\frac\gamma\vp,\,\tau_0(B)>1\right)\bigg\slash P_{x/\sqrt t}\left(\tau_0(B)>1\right)\right].
\end{align*}
Equation \eqref{eq3.16} follows from this, using Lemma \ref{lem3.4} and \eqref{eq3.15}.
\end{proof}

\begin{lem}\label{lem3.6}
For any $C_1>0$,
\begin{align*}           
\liminf_{\vp\to 0}\vp\,&\log E_{x/\sqrt t}\left[\exp\left(-C_1t\int_0^1I_{\sqrt tB_u+\sqrt{t/\vp}\tilde g(u)<M_2}\,du-\frac{\beta}{1-p}\left(\sqrt tB_1\right)^{1-p}\right.\right.\\
&\left.\left. -\frac{1}{\sqrt\vp}\int_0^1\tilde g'(u)\,dB_u\right)I_{\tau_0(B)>1}\right]\geq 0.
\end{align*}
\end{lem}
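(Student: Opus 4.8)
The plan is to reduce this lower bound immediately to Lemma \ref{lem3.5} by truncating the (mostly nonnegative) exponent from above. Write, as in the proof of Lemma \ref{lem3.5},
\[
M=C_1t\int_0^1I_{\sqrt tB_u+\sqrt{t/\vp}\tilde g(u)<M_2}\,du+\frac{\beta}{1-p}\left(\sqrt tB_1\right)^{1-p}+\frac{1}{\sqrt\vp}\int_0^1\tilde g'(u)\,dB_u,
\]
so that the quantity inside the expectation is exactly $E_{x/\sqrt t}\bigl[e^{-M}I_{\tau_0(B)>1}\bigr]$. Fix an arbitrary $\gamma>0$. On the event $\{M\le\gamma/\vp\}$ we have $e^{-M}\ge e^{-\gamma/\vp}$, hence
\[
E_{x/\sqrt t}\bigl[e^{-M}I_{\tau_0(B)>1}\bigr]\ge E_{x/\sqrt t}\bigl[e^{-M}I_{M\le\gamma/\vp}I_{\tau_0(B)>1}\bigr]\ge e^{-\gamma/\vp}\,P_{x/\sqrt t}\bigl(M\le\tfrac\gamma\vp,\,\tau_0(B)>1\bigr).
\]
Taking $\vp\log(\cdot)$ of both sides gives
\[
\vp\,\log E_{x/\sqrt t}\bigl[e^{-M}I_{\tau_0(B)>1}\bigr]\ge-\gamma+\vp\,\log P_{x/\sqrt t}\bigl(M\le\tfrac\gamma\vp,\,\tau_0(B)>1\bigr).
\]

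Now I would let $\vp\to 0$ and invoke Lemma \ref{lem3.5}, which says precisely that the last term tends to $0$. This yields $\liminf_{\vp\to0}\vp\,\log E_{x/\sqrt t}\bigl[e^{-M}I_{\tau_0(B)>1}\bigr]\ge-\gamma$. Since $\gamma>0$ was arbitrary, letting $\gamma\downarrow 0$ gives the claimed bound $\ge 0$, completing the proof.

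The only point requiring a moment's care is that $M$ itself is \emph{not} bounded below: the first two terms are nonnegative on $\{\tau_0(B)>1\}$ (because $B_1>0$ there), but the stochastic integral $\frac{1}{\sqrt\vp}\int_0^1\tilde g'(u)\,dB_u$ has no sign, so $e^{-M}$ is not a bounded integrand and one cannot simply drop the exponential. The truncation to $\{M\le\gamma/\vp\}$ is exactly what circumvents this, and the genuine work — showing that restricting to this event costs nothing on the logarithmic scale — has already been done in Lemmas \ref{lem3.4} and \ref{lem3.5} via the Markov/Chernoff estimates on the three pieces of $M$ together with the Feller asymptotic \eqref{eq3.15} for $P_{x/\sqrt t}(\tau_0(B)>1)$. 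So no new obstacle arises here; this lemma is the short bookkeeping step that packages those estimates into the form needed for the lower bound in Theorem \ref{thm3.2}.
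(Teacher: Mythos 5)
Your argument is correct and is exactly the paper's proof: restrict to the event $\{M\le\gamma/\vp\}$, bound $e^{-M}\ge e^{-\gamma/\vp}$ there, invoke Lemma \ref{lem3.5}, and let $\gamma\downarrow 0$. The extra remark about why $M$ is not bounded below (the stochastic integral has no sign) is a nice clarification but does not change the substance.
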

\begin{proof}
For any $\gamma>0$, 
\begin{align*}           
\liminf_{\vp\to 0}\vp\,&\log E_{x/\sqrt t}\left[\exp\left(-C_1t\int_0^1I_{\sqrt tB_u+\sqrt{t/\vp}\tilde g(u)<M_2}\,du-\frac{\beta}{1-p}\left(\sqrt tB_1\right)^{1-p}\right.\right.\\
&\left.\left.\qquad -\frac{1}{\sqrt\vp}\int_0^1\tilde g'(u)\,dB_u\right)I_{\tau_0(B)>1}\right]\\
&\geq \liminf_{\vp\to 0}\vp\,\log E_{x/\sqrt t}\left[e^{-\gamma/\vp}I\left(C_1t\int_0^1I_{\sqrt tB_u+\sqrt{t/\vp}\tilde g(u)<M_2}\,du+\frac{\beta}{1-p}\left(\sqrt tB_1\right)^{1-p}\right.\right.\\
&\left.\left. \qquad+\frac{1}{\sqrt\vp}\int_0^1\tilde g'(u)\,dB_u\leq\frac{\gamma}{\vp}\right)I_{\tau_0(B)>1}\right]\\
&=\liminf_{\vp\to 0}\left[-\gamma+\vp\,\log P_{x/\sqrt t}\left(C_1t\int_0^1I_{\sqrt tB_u+\sqrt{t/\vp}\tilde g(u)<M_2}\,du+\frac{\beta}{1-p}\left(\sqrt tB_1\right)^{1-p}\right.\right.\\
&\left.\left. \qquad+\frac{1}{\sqrt\vp}\int_0^1\tilde g'(u)\,dB_u\leq\frac{\gamma}{\vp},\tau_0(B)>1\right)\right]\\
&\geq -\,\gamma+0,
\end{align*}
by Lemma \ref{lem3.5}. Let $\gamma\to0$ to finish.
\end{proof}

\begin{lem}\label{lem3.7}
We have
\begin{align*}
\liminf_{\vp\to 0}\vp\,\log E_{x/\sqrt t}&\left[\exp\left(t\int_0^1V(\sqrt tB_u)I_{M_1<\sqrt tB_u<M_2}\,du\right)h(\sqrt tB_1)I_{\tau_0(Z)>1}\right]\\
&\geq -\frac{\beta}{1-p}\,\tilde g(1)^{1-p}-\frac12\int_0^1\tilde g'(u)^2\,du.
\end{align*}
\end{lem}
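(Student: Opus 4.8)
The plan is to peel off from the expectation in Lemma \ref{lem3.7} everything except what Lemma \ref{lem3.6} already controls, using two crude pointwise bounds followed by a Cameron--Martin shift of the Brownian path by $\tilde g(\cdot)/\sqrt\vp$. Since $b\in C^3$, $V=-\tfrac12(b^2+b')$ is continuous, so $C_1:=\sup_{[M_1,M_2]}|V|<\infty$ and $t\int_0^1V(\sqrt tB_u)I_{M_1<\sqrt tB_u<M_2}\,du\ge-C_1t\int_0^1I_{\sqrt tB_u<M_2}\,du$. For $h$: by \eqref{eq2.5} with $\alpha\ge0$ one has $h\ge\exp(-\tfrac{\alpha}{1-q}M_1^{1-q})>0$ on $(0,M_1]$, by \eqref{eq2.4} $h$ is bounded below on $[M_1,M_2]$, and by \eqref{eq2.6} $h(y)=C\exp(-\tfrac{\beta}{1-p}y^{1-p})$ for $y>M_2$; combining these there is a constant $c>0$ with $h(y)\ge c\exp(-\tfrac{\beta}{1-p}y^{1-p})$ for all $y>0$. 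Hence, on $\{\tau_0(Z)>1\}$ (where $B_1>0$ by \eqref{eq3.5}), the quantity inside the expectation in Lemma \ref{lem3.7} is at least
\[
c\,\exp\left(-C_1t\int_0^1I_{\sqrt tB_u<M_2}\,du-\tfrac{\beta}{1-p}(\sqrt tB_1)^{1-p}\right)I_{\tau_0(Z)>1}.
\]

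Next I would carry out the Cameron--Martin/Girsanov change of measure that shifts the canonical path $B$ to $B+\tilde g(\cdot)/\sqrt\vp$ (writing $\tilde g=g+\delta$ and shifting by the element $g/\sqrt\vp\in K_0$, absorbing the constant $\delta/\sqrt\vp$ into the starting point). This (i) turns $\{\tau_0(Z)>1\}$, with $Z=B-\tilde g/\sqrt\vp$, into $\{\tau_0(B)>1\}$ up to a barrier at height of order $\vp^{-1/2}$ that is negligible because $t/\vp=\vp^{-(1+p)/(1-p)-1}\to\infty$ (the device used in the proof of Lemma \ref{lem3.4}); (ii) replaces each argument $\sqrt tB_u$ by $\sqrt tB_u+\sqrt{t/\vp}\,\tilde g(u)$; and (iii) produces the Radon--Nikodym density $\exp\bigl(-\tfrac1{\sqrt\vp}\int_0^1\tilde g'(u)\,dB_u-\tfrac1{2\vp}\int_0^1\tilde g'(u)^2\,du\bigr)$. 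Using subadditivity of $s\mapsto s^{1-p}$ and $(\sqrt{t/\vp})^{1-p}=\vp^{-1}$ (from \eqref{eq3.3}), $\bigl(\sqrt tB_1+\sqrt{t/\vp}\,\tilde g(1)\bigr)^{1-p}\le(\sqrt tB_1)^{1-p}+\vp^{-1}\tilde g(1)^{1-p}$, so after the shift the integrand is at least
\[
c\,\exp\left(-\tfrac{\beta}{1-p}\,\tfrac{\tilde g(1)^{1-p}}{\vp}-\tfrac1{2\vp}\int_0^1\tilde g'(u)^2\,du\right)\Xi,
\]
where
\[
\Xi:=\exp\left(-C_1t\int_0^1I_{\sqrt tB_u+\sqrt{t/\vp}\tilde g(u)<M_2}\,du-\tfrac{\beta}{1-p}(\sqrt tB_1)^{1-p}-\tfrac1{\sqrt\vp}\int_0^1\tilde g'(u)\,dB_u\right)I_{\tau_0(B)>1}.
\]

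Finally, taking $E_{x/\sqrt t}$ and then $\vp\log(\cdot)$: the deterministic exponential prefactor contributes $-\tfrac{\beta}{1-p}\tilde g(1)^{1-p}-\tfrac12\int_0^1\tilde g'(u)^2\,du$ in the limit, the constant $c$ drops out, and $\liminf_{\vp\to0}\vp\log E_{x/\sqrt t}[\Xi]\ge0$ is exactly Lemma \ref{lem3.6} (with the same $C_1$); adding the two contributions gives the asserted lower bound. I expect the main obstacle to be steps (i)--(iii), i.e., executing the Cameron--Martin shift cleanly: tracking signs in the density, handling the fact that $\tilde g(\cdot)/\sqrt\vp$ does not start at $0$, and verifying that the barrier introduced when passing from $\{\tau_0(Z)>1\}$ to $\{\tau_0(B)>1\}$ really is harmless as $t/\vp\to\infty$. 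Everything after the shift is the two elementary estimates above, the inequality $(a+b)^{1-p}\le a^{1-p}+b^{1-p}$, and a direct appeal to Lemma \ref{lem3.6}.
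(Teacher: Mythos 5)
Your proof is correct and essentially the same as the paper's: bound $V$ by $-C_1$ on $[M_1,M_2]$, perform the Cameron--Martin shift by $\tilde g/\sqrt\vp$, use $(a+b)^{1-p}\le a^{1-p}+b^{1-p}$ with $(\sqrt{t/\vp})^{1-p}=\vp^{-1}$ to pull out the deterministic prefactor, and invoke Lemma~\ref{lem3.6}. The only (harmless) difference is that the paper first restricts to the event $\{\sqrt tB_1>M_2\}$ so as to use the exact form of $h$ from \eqref{eq2.6} there and then checks $I_{M_2<\sqrt{t/\vp}\,\tilde g(1)}\to1$, whereas you derive the uniform bound $h(y)\ge c\exp\bigl(-\tfrac{\beta}{1-p}y^{1-p}\bigr)$ for all $y>0$ from Lemma~\ref{lem2.3} and $\alpha\ge0$, a modest streamlining.
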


\begin{proof}
Since $V$ is bounded on $[M_1,M_2]$, there is $C_1>0$ such that $|V|\leq C_1$ on that interval. Then
\begin{align}\label{eq3.17}
E_{x/\sqrt t}&\left[\exp\left(t\int_0^1V(\sqrt tB_u)I_{M_1<\sqrt tB_u<M_2}\,du\right)h(\sqrt tB_1)I_{\tau_0(Z)>1}\right]\notag\\
&\geq E_{x/\sqrt t}\left[\exp\left(t\int_0^1V(\sqrt tB_u)I_{M_1<\sqrt tB_u<M_2}\,du\right)h(\sqrt tB_1)I_{M_2<\sqrt tB_1}I_{\tau_0(Z)>1}\right]\notag\\
&\geq CE_{x/\sqrt t}\left[\exp\left(-C_1t\int_0^1I_{\sqrt tB_u<M_2}\,du-\frac{\beta}{1-p}(\sqrt tB_1)^{1-p}\right)I_{M_2<\sqrt tB_1}I_{\tau_0(Z)>1}\right]\notag\\
\intertext{(where $C$ is from \eqref{eq2.6} and using that $I_{M_1<\sqrt tB_u<M_2}\leq I_{\sqrt tB_u<M_2}$)}
&\geq CE_{x/\sqrt t}\left[\exp\left(-C_1t\int_0^1I_{\sqrt tZ_u+\sqrt{t/\vp}\tilde g(u)<M_2}\,du-\frac{\beta}{1-p}\left(\sqrt tZ_1+\sqrt{t/\vp}\tilde g(1)\right)^{1-p}\right)\cdot\right.\notag\\
&\qquad\left.\vphantom{\int_0^1}\cdot I_{M_2<\sqrt {t/\vp}\tilde g(1)}I_{\tau_0(Z)>1}\right]\notag\\
\intertext{\big(using \eqref{eq3.4} and that $I_{M_2<\sqrt tB_1}= I_{M_2<\sqrt tZ_1+\sqrt{t/\vp}\tilde g(1)}\geq I_{M_2<\sqrt {t/\vp}\tilde g(1)}$ for $\tau_0(Z)>1$\big)}
&\geq CE_{x/\sqrt t}\left[\exp\left(-C_1t\int_0^1I_{\sqrt tZ_u+\sqrt{t/\vp}\tilde g(u)<M_2}\,du-\frac{\beta}{1-p}\left(\sqrt tZ_1\right)^{1-p}\right.\right.\notag\\
&\qquad\left.\left. -\frac{\beta}{1-p}\left(\sqrt{t/\vp}\,\tilde g(1)\right)^{1-p}\right)I_{M_2<\sqrt {t/\vp}\tilde g(1)}I_{\tau_0(Z)>1}\right]\notag\\
\intertext{(using that $(a+b)^{1-p}\leq a^{1-p}+b^{1-p}$ since $0<p<1$)}
&=C\exp\left(-\frac{\beta}{1-p}\left(\sqrt{t/\vp}\,\tilde g(1)\right)^{1-p}\right)I_{M_2<\sqrt {t/\vp}\tilde g(1)}\cdot\notag\\
&\qquad\cdot E_{x/\sqrt t}\left[\exp\left(-C_1t\int_0^1I_{\sqrt tB_u+\sqrt{t/\vp}\tilde g(u)<M_2}\,du-\frac{\beta}{1-p}\left(\sqrt tB_1\right)^{1-p}\right.\right.\notag\\
&\qquad\qquad\left.\left. -\frac{1}{\sqrt\vp}\int_0^1\tilde g'(u)\,dB_u-\frac{1}{2\vp}\int_0^1\tilde g'(u)^2\,du\right)I_{\tau_0(B)>1}\right]\notag\\
\intertext{(by the Cameron-Martin-Girsanov Theorem)}
&=C\exp\left(-\frac{\beta}{1-p}\left(\sqrt{t/\vp}\,\tilde g(1)\right)^{1-p}-\frac{1}{2\vp}\int_0^1\tilde g'(u)^2\,du\right)I_{M_2<\sqrt {t/\vp}\tilde g(1)}\cdot\notag\\
&\qquad\cdot E_{x/\sqrt t}\left[\exp\left(-C_1t\int_0^1I_{\sqrt tB_u+\sqrt{t/\vp}\tilde g(u)<M_2}\,du-\frac{\beta}{1-p}\left(\sqrt tB_1\right)^{1-p}\right.\right.\notag\\
&\qquad\qquad\left.\left. -\frac{1}{\sqrt\vp}\int_0^1\tilde g'(u)\,dB_u\right)I_{\tau_0(B)>1}\right].
\end{align}
From this and Lemma \ref{lem3.6}, we get
\begin{align*}
\liminf_{\vp\to 0}\vp\,&\log E_{x/\sqrt t}\left[\exp\left(t\int_0^1V(\sqrt tB_u)I_{M_1<\sqrt tB_u<M_2}\,du\right)h(\sqrt tB_1)I_{\tau_0(Z)>1}\right]\\
&\geq \liminf_{\vp\to 0}\vp\,\left(\log C-\frac{\beta}{1-p}\left(\sqrt{t/\vp}\,\tilde g(1)\right)^{1-p}-\frac{1}{2\vp}\int_0^1\tilde g'(u)^2\,du+\log I_{M_2<\sqrt {t/\vp}\tilde g(1)}\right)+0\\
&=\liminf_{\vp\to 0}\left(\vp\,\log C-\frac{\beta}{1-p}\tilde g(1)^{1-p}-\frac{1}{2}\int_0^1\tilde g'(u)^2\,du+\vp\,\log I_{M_2<\vp^{-1/(1+p)}\tilde g(1)}\right)\\
\intertext{(using \eqref{eq3.3} to substitute for $t$ in terms of $\vp$)}
&= -\frac{\beta}{1-p}\,\tilde g(1)^{1-p}-\frac12\int_0^1\tilde g'(u)^2\,du,
\end{align*}
as desired, where we have used that 
\[
\lim_{\vp\to 0}\,I_{M_2<\vp^{-1/(1+p)}\tilde g(1)}=1,
\]
since $\tilde g(1)=g(1)+\delta>0$.
\end{proof}

\noindent{\bf Proof of Theorem \ref{thm3.2}.} It was shown in DeBlassie and Smits (2007) that
\[
\gamma(p,\beta)=\inf_{\omega\in K_0}F(\omega)=\mathop{\inf_{\omega\in K_0}}_{\omega\geq 0}F(\omega),
\]
where $F$ is from \eqref{eq3.2} (see the very end of Section 5 and Theorem 6.1 in that article). Hence to prove Theorem \ref{thm3.2} we need to show
\[
\liminf_{t\to\infty}t^{-(1-p)/(1+p)}\log P_x(\tau_0(X)>t)\geq-\mathop{\inf_{\omega\in K_0}}_{\omega\geq 0}F(\omega).
\]
Now by \eqref{eq3.3} and \eqref{eq3.7},
\begin{align*}
\liminf_{t\to\infty}\, &t^{-(1-p)/(1+p)}\log P_x(\tau_0(X)>t)\\
&\geq\liminf_{\vp\to 0}\vp\,\log E_{x/\sqrt t}\left[\exp\left(t\int_0^1V(\sqrt tB_u)\,du\right)h(\sqrt tB_1)I_{\tau_0(Z)>1}\right]\\
&\geq\liminf_{\vp\to 0}\vp\,\log E_{x/\sqrt t}\left[\exp\left(G_1(\tilde g,\vp)+t\int_0^1V(\sqrt tB_u)I_{M_1<\sqrt tB_u<M_2}\,du\right)h(\sqrt tB_1)I_{\tau_0(Z)>1}\right]\\
\intertext{(by Lemma \ref{lem3.3})}
&=\liminf_{\vp\to 0}\left[\vp\,G_1(\tilde g,\vp)+\vp\,\log E_{x/\sqrt t}\left[\exp\right.\left(t\int_0^1V(\sqrt tB_u)I_{M_1<\sqrt tB_u<M_2}\,du\right)h(\sqrt tB_1)I_{\tau_0(Z)>1}\right]\\
&\geq -\frac{\beta^2}{2}\int_0^1\tilde g(u)^{-2p}\,du-\frac{\beta}{1-p}\,\tilde g(1)^{1-p}-\frac12\int_0^1\tilde g'(u)^2\,du.
\intertext{(by Lemmas \ref{eq3.2} and \ref{lem3.7}).}
\end{align*}
Recalling $\tilde g=g+\delta$, where $\delta >0$ was arbitrary, we can let $\delta\to 0$ to end up  with
\begin{align*}
\liminf_{t\to\infty}\,t^{-(1-p)/(1+p)}&\log P_x(\tau_0(X)>t)\geq -\frac{\beta^2}{2}\int_0^1 g(u)^{-2p}\,du-\frac{\beta}{1-p}\,g(1)^{1-p}-\frac12\int_0^1g'(u)^2\,du.\\
&=-F(g)
\end{align*}
Since $g\geq 0$ in $K_0$ was arbitrary, the desired lower bound in Theorem \ref{thm3.2} holds.\hfill$\square$

\bigskip
\noindent{\bf Proof of Theorem \ref{thm3.1}.} Since $b$ is bounded below on $[M_1,M_2]$, we can choose $C<0$ such that $C\leq b$ on $[M_1,M_2]$. Then for some $x_1<M_1$ and $x_2>M_2$, we can choose $\tilde b\in C^3(0,\infty)$ such that
\[
\tilde b(x)=\left\{\begin{array}{ll}
-|\alpha| x^{-q},&\quad x\in(0,x_1]\\
-\beta x^{-p},&\quad x\in [x_2,\infty),
\end{array}\right.
\]
and $b\geq \tilde b$ on $(0,\infty)$. By the Comparison Theorem, if $dY_t=dB_t+\tilde b(Y_t),\, Y_0=x$, then 
\[
P_x(\tau_0(X)>t)\geq P_x(\tau_0(Y)>t).
\]
The desired lower bound follows from Theorem \ref{thm3.2} applied to $Y$. \hfill$\square$

%%%%%%%%%%%%%%%%%%%%%%%%%%%%%%%%%%%%%%%%%%%%%%%%%%%%%%%%%%%%%%%%%%%%%%%%%%%%%%%%%%%%%%

\section{Upper Bound: A Special $C^3$ Case}\label{sec4}

The main theorem of this section is the following special case. It will be crucial in proving the general case.
\begin{thm}\label{thm4.1}
Let $X_t$ be from \eqref{eq1.1} where $b$ from \eqref{eq1.2} satisfies $\alpha\geq 0$, $b\in C^3$ and $b'\geq 0$ on $(0,\infty)$. Then
\[
\limsup_{t\to\infty}\log P_x(\tau_0(X)>t)\leq-\,\gamma(p,\beta),
\]
where $\gamma(p,\beta)$ is from \eqref{eq1.4}.
\end{thm}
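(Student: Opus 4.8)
\noindent The plan is to rerun the Feynman--Kac/Brownian-scaling machinery of Section~\ref{sec3}, now in the direction that produces an \emph{upper} bound, and to recognize the resulting small-parameter asymptotics as a Laplace-type estimate driven by the functional $F$ of~\eqref{eq3.2}. (Just as Theorem~\ref{thm3.1} was deduced from the special case Theorem~\ref{thm3.2}, the reduction of a general drift as in~\eqref{eq1.2} to the present hypotheses will be postponed; here $\alpha\ge0$ and $b'\ge0$ are imposed precisely so that the Feynman--Kac potential is everywhere $\le0$, and $b\in C^3$ is what Lemma~\ref{lem2.5} needs.) By Lemma~\ref{lem2.5},
\[
P_x(\tau_0(X)>t)=E_x\!\left[\exp\!\left(\int_0^tV(B_s)\,ds\right)h(B_t)I_{\tau_0(B)>t}\right].
\]
Because $b'\ge0$ and $\alpha\ge0$, $V=-\tfrac12(b^2+b')\le-\tfrac12b^2\le0$ throughout $(0,\infty)$, and on $[M_2,\infty)$ the extra term $-\tfrac{p\beta}{2}x^{-p-1}$ only helps, so $V(x)\le-\tfrac{\beta^2}{2}x^{-2p}I_{[M_2,\infty)}(x)\le0$. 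By Lemma~\ref{lem2.3} (using $\alpha\ge0$ in~\eqref{eq2.5}, the bound~\eqref{eq2.3} on $[M_1,M_2]$, and~\eqref{eq2.6} for $x\ge M_2$), $h$ is bounded on $(0,\infty)$ and $h(x)\le C'\exp\!\big(-\tfrac{\beta}{1-p}x^{1-p}I_{[M_2,\infty)}(x)\big)$ for a suitable constant $C'$.

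\medskip\noindent Next I would apply Brownian scaling as in~\eqref{eq3.7}, set $\vp=t^{-(1-p)/(1+p)}$ and $\xi_u=\sqrt\vp\,B_u$, and use the identities $\sqrt t\,B_u=\vp^{-1/(1-p)}\xi_u$, $\,t^{1-p}\vp^{\,p}=\vp^{-1}$ and $\,\xi_0=x\vp^{1/(1-p)}\to0$ to turn the two pointwise bounds above into
\[
P_x(\tau_0(X)>t)\le C'\,E\!\left[\exp\!\left(-\tfrac1\vp J_\vp(\xi)\right)I_{\tau_0(\xi)>1}\right],
\]
where
\[
J_\vp(\phi)=\tfrac{\beta^2}{2}\int_0^1\phi_u^{-2p}\,I_{\{\phi_u\ge M_2\vp^{1/(1-p)}\}}\,du+\tfrac{\beta}{1-p}\phi_1^{1-p}\,I_{\{\phi_1\ge M_2\vp^{1/(1-p)}\}},
\]
and $\xi$ is a Brownian motion of infinitesimal variance $\vp$ started at $x\vp^{1/(1-p)}$. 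Since $J_\vp\ge0$, the integrand is $\le I_{\tau_0(\xi)>1}\le1$: this gives exponential tightness for free and makes the (polynomially small, by~\eqref{eq3.15}) factor $P(\tau_0(\xi)>1)$ irrelevant on the exponential scale. As $\vp\log C'\to0$, it suffices to prove
\[
\limsup_{\vp\to0}\vp\,\log E\!\left[\exp\!\left(-\tfrac1\vp J_\vp(\xi)\right)I_{\tau_0(\xi)>1}\right]\le-\gamma(p,\beta)=-\!\!\inf_{\omega\in K_0,\ \omega\ge0}\!\!F(\omega),
\]
the last equality being the variational identity of DeBlassie and Smits (2007) quoted in the proof of Theorem~\ref{thm3.2}.

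\medskip\noindent For this I would run a large-deviations analysis. By Schilder's theorem $\xi$ obeys a large deviation principle on $C[0,1]$ at speed $1/\vp$ with good rate function $I(\phi)=\tfrac12\int_0^1\dot\phi_u^2\,du$ for absolutely continuous $\phi$ with $\phi_0=0$ (the vanishing shift of the starting point being harmless). Covering the (compact, by exponential tightness) relevant path space by small balls, on any ball of paths staying bounded away from $0$ the cutoffs in $J_\vp$ are inactive once $\vp$ is small, so there $J_\vp$ agrees with the continuous functional $\tfrac{\beta^2}{2}\int_0^1\phi_u^{-2p}\,du+\tfrac{\beta}{1-p}\phi_1^{1-p}$; applying the LDP upper bound on such a ball yields the contribution $-\inf\{I(\phi)+\tfrac{\beta^2}{2}\int_0^1\phi_u^{-2p}du+\tfrac{\beta}{1-p}\phi_1^{1-p}\}$, and since this functional equals $F(\phi)$ on $\{\phi\ge0\}$, those pieces are $\le-\gamma(p,\beta)$ by the DeBlassie--Smits identity.

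\medskip\noindent The main obstacle is the balls that touch the line $\{\phi\equiv0\}$ near the left endpoint — and these are unavoidable, since the starting point tends to $0$ (so every rate-finite competitor has $\phi_0=0$) and, in particular, the $F$-minimizer itself vanishes at $u=0$. There the argument must combine two ingredients: (i) a continuous lower approximation $\hat J_\vp\le J_\vp$, obtained by ramping the indicator $I_{\{\cdot\ge M_2\vp^{1/(1-p)}\}}$ continuously up across $[M_2\vp^{1/(1-p)},\,2M_2\vp^{1/(1-p)}]$, which still coincides with the honest functional on any path lying above the cutoff; and (ii) a small-box/principal-eigenvalue estimate showing that the probability that $\xi$ stays below the level $M_2\vp^{1/(1-p)}$ over any macroscopic time interval decays at the \emph{super}-exponential rate $\sim\vp\,(M_2\vp^{1/(1-p)})^{-2}=M_2^{-2}\vp^{-(1+p)/(1-p)}$ (note $(1+p)/(1-p)>1$ for $p\in(0,1)$), hence negligibly against $e^{-\gamma/\vp}$. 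The underlying picture is that the cutoff is active only during a $\xi$-excursion near $0$: if that excursion lasts macroscopically long it is itself super-exponentially costly, while if it is brief then $J_\vp$ essentially recovers $\tfrac{\beta^2}{2}\int\phi_u^{-2p}du+\tfrac{\beta}{1-p}\phi_1^{1-p}$, so that the minimizer's integrably-singular behavior near $u=0$ is respected. Carrying this out uniformly over the finite cover is the heart of the proof; granting it and letting the mesh of the cover go to $0$ gives the displayed $\limsup$ bound and hence Theorem~\ref{thm4.1}. The subsequent reductions — to a general $C^3$ drift, and then to the merely bounded-measurable drift of~\eqref{eq1.2} — would be handled by comparison and $h$-transform arguments, exactly as in the passage from Theorem~\ref{thm3.2} to Theorem~\ref{thm3.1}.
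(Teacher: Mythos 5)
Your setup — the Feynman--Kac formula of Lemma~\ref{lem2.5}, the bounds $V\le V I_{[M_2,\infty)}$ and $h(x)\le C\exp\left(-\tfrac{\beta}{1-p}x^{1-p}\right)$, and Brownian scaling to reach a Laplace-type estimate in $\vp=t^{-(1-p)/(1+p)}$ — matches the paper's first steps. After that the two routes diverge. The paper drops the positivity indicator $I(\tau_0(\vp^{1/2}B+x_\vp)>1)$, reduces to $CE^{Q_\vp}[\exp(-J_\vp/\vp)]$, and then directly invokes Varadhan's Theorem (1984, Theorem 2.3), asserting the approximation property $\liminf_{n,\vp}J_\vp(\omega_n)\ge J(\omega)$ along $\omega_n\to\omega$. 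You instead retain the constraint $I_{\tau_0(\xi)>1}$ and rebuild the upper bound from Schilder's theorem, exponential tightness, a ball cover, and a principal-eigenvalue confinement estimate near $\{\phi\equiv0\}$. That instinct is in fact essential: with the indicator dropped, the paper's approximation claim fails at $\omega\equiv0$ when $x<M_2$ (one has $J_\vp(0)=\tfrac{\beta}{1-p}x^{1-p}\vp\to0$ yet $J(0)=\infty$), and on the event $\left\{\sup_u\omega_u<(M_2-x)\vp^{1/(1-p)},\ |\omega_1|<\vp^{1/(1-p)}\right\}$ — which has $Q_\vp$-probability of polynomial order $\vp^{3\theta}$, $\theta=1/(1-p)-1/2$ — the ratio $J_\vp/\vp$ stays bounded, so in fact $\lim_{\vp\to0}\vp\log E^{Q_\vp}[\exp(-J_\vp/\vp)]=0$, not $\le-\gamma(p,\beta)$. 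Your route thus confronts a subtlety that the paper's short citation glosses over, and is the one that can actually close the argument.

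That said, your proposal is a plan rather than a proof. You explicitly postpone ``the heart of the proof'' — the uniform ball-cover estimate near the zero path — which is precisely where the work lies. In addition, the stated confinement estimate, that ``the probability that $\xi$ stays below the level $M_2\vp^{1/(1-p)}$ over any macroscopic time interval decays at the super-exponential rate $\sim M_2^{-2}\vp^{-(1+p)/(1-p)}$,'' is misleading as written: a one-sided barrier $\sup_u\xi_u<c\,\vp^{1/(1-p)}$ for scaled Brownian motion over $[0,1]$ decays only polynomially. It is the \emph{two-sided} confinement of $\xi$ to the strip $\left(0,\,M_2\vp^{1/(1-p)}\right)$ — with the lower wall supplied by $\tau_0(\xi)>1$ — that produces the eigenvalue rate $\asymp\vp^{-(1+p)/(1-p)}$, and you should say so explicitly. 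To finish one must then localize to a compact level set by exponential tightness, split the cover into balls at positive distance from the set of paths touching $0$ (where uniform convergence of $J_\vp$ to $J$ and the Schilder upper bound apply) and balls containing such paths (where the two-sided confinement bound controls any macroscopically long sub-threshold stretch, and a separate argument handles the short initial excursion near $u=0$), and then let the mesh of the cover go to zero. Until those steps are carried out, the theorem is not proved.
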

\begin{proof}
Let $h$ be from \eqref{eq2.2} and $V$ from \eqref{eq2.8}. By Lemma \ref{lem2.4} and our hypotheses that $b'\geq 0$ and $\alpha\geq 0$,
\begin{align}\label{eq4.1}
V(x)&\leq V(x)I_{(M_1,\infty)}(x)\notag\\
&=V(x)I_{[M_2,\infty)}(x)-\tfrac12\left(b^2(x)+b'(x)\right)I_{(M_1,M_2)}(x)\notag\\
&\leq V(x)I_{[M_2,\infty)}(x).
\end{align}
By Lemma \ref{lem2.3}, since $\beta>0$,
\begin{equation}\label{eq4.2}
h(x)\leq C\exp\left(-\tfrac{\beta}{1-p}x^{1-p}\right), \quad x>0.
\end{equation}
By Lemma \ref{lem2.5}, \eqref{eq4.1}, scaling and translation,
\begin{align}\label{eq4.3}
P_x(\tau_0(X)>t)&=E_x\left[\exp\left(\int_0^tV(B_s)\,ds\right)h(B_t)I_{\tau_0(B)>t}\right]\notag\\
&\leq E_x\left[\exp\left(\int_0^t(VI_{[M_2,\infty)}(B_s)\,ds\right)h(B_t)I_{\tau_0(B)>t}\right]\notag\\
&= E_0\left[\exp\left(t\int_0^1(VI_{[M_2,\infty)}(\sqrt t(B_u+x/\sqrt t))\,du\right)\cdot\right.\notag\\
&\left.\vphantom{\int_0^t}\hspace{.75in}\cdot h(\sqrt t(B_1+x/\sqrt t))I(\tau_0(B+x/\sqrt t)>1)\right].
\end{align}
Writing $\vp=t^{-(1-p)/(1+p)}$ and $x_\vp=x\vp^{1/(1-p)}$, we have
\[
\sqrt t(B_u+x/\sqrt t)=\sqrt tB_u+x=\vp^{-1/(1-p)}(\vp^{1/2}B_u+x_\vp).
\]
By \eqref{eq2.8} and \eqref{eq1.2},
\begin{align*}
VI_{[M_2,\infty)}(ax)&=-\tfrac12\left(\beta^2a^{-2p}x^{-2p}+p\beta a^{-p-1}x^{-p-1}\right)I_{[a^{-1}M_2,\infty)}(x)\\
&\leq-\tfrac12\beta^2a^{-2p}x^{-2p}\,I_{[a^{-1}M_2,\infty)}(x).
\end{align*}
Thus
\begin{align}\label{eq4.4}
(VI_{[M_2,\infty)})&(\sqrt tB_u+x)=(VI_{[M_2,\infty)})(\vp^{-1/(1-p)}(\vp^{1/2}B_u+x_\vp))\notag\\
&\leq-\tfrac12\beta^2\vp^{2p/(1-p)}(\vp^{1/2}B_u+x_\vp)^{-2p}\,I_{\left[\vp^{1/(1-p)}M_2,\infty\right)}(\vp^{1/2}B_u+x_\vp).
\end{align}
By \eqref{eq4.2},
\begin{align}\label{eq4.5}
h(\sqrt tB_1+x)&\leq C\exp\left(-\tfrac{\beta}{1-p}(\sqrt tB_1+x)^{1-p}\right)\notag\\
&=C\exp\left(-\tfrac{\beta}{1-p}\vp^{-1}(\vp^{1/2}B_1+x_\vp)^{1-p}\right).
\end{align}
Substituting $t=\vp^{-(1+p)/(1-p)}$ into \eqref{eq4.3} and using \eqref{eq4.4}--\eqref{eq3.5} gives
\begin{align*}
P_x(\tau_0(X)>t)&\leq CE_0\left[\exp\left(-\tfrac{\beta^2}{2}\vp^{-1}\int_0^1\left(\vp^{1/2}B_u+x_\vp\right)^{-2p}I_{\left[\vp^{1/(1-p)}M_2,\infty\right)}(\vp^{1/2}B_u+x_\vp)\,du\right.\right.\notag\\
&\qquad\left.\left.-\tfrac{\beta}{1-p}\vp^{-1}\left(\vp^{1/2}B_1+x_\vp\right)^{1-p}\right)I(\tau_0(\vp^{1/2} B+x_\vp)>1)\right]\\
&\leq CE_0\left[\exp\left(-\tfrac{\beta^2}{2}\vp^{-1}\int_0^1\left(\vp^{1/2}B_u+x_\vp\right)^{-2p}I_{\left[\vp^{1/(1-p)}M_2,\infty\right)}(\vp^{1/2}B_u+x_\vp)\,du\right.\right.\notag\\
&\qquad\left.\left.-\tfrac{\beta}{1-p}\vp^{-1}\left|\vp^{1/2}B_1+x_\vp\right|^{1-p}\right)\right].
\end{align*}
Writing $Q_\vp$ for the law on $C([0,\infty),\mathbb{R})$ of $\sqrt\vp B$ under $P_0$, this becomes
\begin{align*}
P_x(\tau_0(X)>t)&\leq CE^{Q_\vp}\left[\exp\left(-\tfrac{\beta^2}{2}\vp^{-1}\int_0^1\left(\omega_u+x_\vp\right)^{-2p}I_{\left[\vp^{1/(1-p)}M_2,\infty\right)}(\omega_u+x_\vp)\,du\right.\right.\notag\\
&\qquad\left.\left.-\tfrac{\beta}{1-p}\vp^{-1}\left|\omega_1+x_\vp\right|^{1-p}\vphantom{\int_0^1}\right)\right]\\
&=CE^{Q_\vp}\left[\exp\left(-\tfrac1\vp J_\vp(\omega)\right)\right],
\end{align*}
where
\begin{align*}
J_\vp(\omega)&=\tfrac{\beta^2}{2}\int_0^1\left(\omega_u+x_\vp\right)^{-2p}I_{\left[\vp^{1/(1-p)}M_2,\infty\right)}(\omega_u+x_\vp)\,du+\tfrac{\beta}{1-p}\left|\omega_1+x_\vp\right|^{1-p}.
\end{align*}
Set
\[
J(\omega)=\tfrac{\beta^2}{2}\int_0^1|\omega_u|^{-2p}\,du+\tfrac{\beta}{1-p}|\omega_1|^{1-p}
\]
if the integral is finite, otherwise set $J(\omega)=\infty$. Then $J$ is lower semicontinuous on
\begin{equation}\label{eq4.6}
C_0=\{\omega:[0,1]\to\mathbb{R}\,|\,\omega \text{ is continuous and $\omega(0)=0$}\}
\end{equation}
and if $\omega_n\to\omega$ in $C_0$ as $n\to\infty$, then
\[
\mathop{\liminf_{n\to\infty}}_{\vp\to 0^+}J_\vp(\omega_n)\geq J(\omega).
\]
If $\omega\in C_0$ is absolutely continuous, denote its derivative by $\dot\omega_u$. Then for
\[
K_0=\{\omega\in C_0:\int_0^1|\dot\omega_u|^2\,du<\infty\},
\]
by Varadhan's Theorem (1984, Theorem 2.3),
\begin{align*}
\limsup_{\vp\to 0^+}\vp\log E^{Q_\vp}\left[\exp\left(-\tfrac1\vp J_\vp(\omega)\right)\right]&\leq-\inf_{\omega\in C_0}\left[J(\omega)+\tfrac12\int_0^1(\dot\omega_u)^2\,du\right]\\
&=-\inf_{\omega\in K_0}\left[J(\omega)+\tfrac12\int_0^1|\dot\omega_u|^2\,du\right].
\end{align*}
By Theorem 6.1 in DeBlassie and Smits (2007), the infimum is $\gamma(p,\beta)$. This completes the proof.

\end{proof}

%%%%%%%%%%%%%%%%%%%%%%%%%%%%%%%%%%%%%%%%%%%%%%%%%%%%%%%%%%%%%%%%%%%%%%%%%%%%%%%%%%%%%%

\section{Transformation of Drift}\label{sec5}

In this section we set the stage to extend Theorem \ref{thm4.1} to a drift with nonnegative $\alpha$ and $b$ that can take on positive values on $[M_1,M_2]$. There will be no increasing or $C^3$ conditions imposed. We will also set things up for the case of negative $\alpha$.

\bigskip\n First we prove a variant of the formula in Lemma \ref{lem2.5} that is applicable to discontinuous drifts.

\begin{lem}\label{lem5.1}
Suppose $b_X$ and $b_Y$ satisfy \eqref{eq1.2} with the same $M_1$ and $M_2$. If $b_X$ and $b_Y$ are continuous on $(0,\infty)\backslash\{M_2\}$ and each restricted to $(M_1,M_2)$ has a $C^1$ extension to $[M_1,M_2]$, then the function
\begin{equation}\label{eq5.1}
g(y)=\int_0^y\left(b_X-b_Y\right)(z)\,dz
\end{equation}
is a linear combination of convex functions with generalized second derivative $\mu$ given by
\begin{equation}\label{eq5.2}
\mu(A)=(b_X-b_Y)(M_2-)\,\delta_{M_2}(A)-\int_A\left(\left(b_X'-b_Y'\right)I_{(M_1,M_2)}\right)(a)\,da
\end{equation}
for any Borel set $A\subseteq(0,\infty)$. Here $\delta_{M_2}$ is the unit point mass at $M_2$.
\end{lem}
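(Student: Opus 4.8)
The plan is to reduce the statement to a local analysis of $\phi:=b_X-b_Y$ on $[M_1,M_2]$ together with a single jump of $g'$ at $M_2$. First I would extract the structural consequences of the hypotheses: since $b_X$ and $b_Y$ are prescribed by the same power laws on $(0,M_1]$ and on $[M_2,\infty)$, the difference $\phi$ vanishes on both sets, so $g(y)=\int_0^y\phi$ is well defined and continuous on $(0,\infty)$, identically $0$ on $(0,M_1]$, and equal to the constant $\int_{M_1}^{M_2}\phi$ on $[M_2,\infty)$. On $(M_1,M_2)$ the function $\phi$ is $C^1$ and, by assumption, extends to a $C^1$ function on $[M_1,M_2]$; hence $\phi$ has finite one-sided limits, $\phi(M_2-)=(b_X-b_Y)(M_2-)$ and, by continuity of $b_X,b_Y$ at $M_1$, $\phi(M_1)=0$. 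Consequently $g\in C^1\big((0,\infty)\setminus\{M_2\}\big)$ with $g'=\phi$ there, and $g'$ has precisely one jump, at $M_2$, of size $-(b_X-b_Y)(M_2-)$.

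Next I would show $g$ is a linear combination of convex functions. Subtract the jump: put $r(y):=g(y)+(b_X-b_Y)(M_2-)(y-M_2)^+$, so that $r'=g'+(b_X-b_Y)(M_2-)I_{(M_2,\infty)}$ is continuous on $(0,\infty)$ (the construction kills the jump at $M_2$). Since $r'$ vanishes near $0$, coincides with the $C^1$ function $\phi$ on $(M_1,M_2)$, and is constant near $\infty$, it is bounded and Lipschitz on $(0,\infty)$, so $r\in C^{1,1}(0,\infty)$ with a bounded, compactly supported a.e.\ second derivative. Taking $L:=\tfrac12\sup|r''|<\infty$, the functions $r+Ly^2$ and $Ly^2$ are convex, and
\[
g=(r+Ly^2)-Ly^2-(b_X-b_Y)(M_2-)(y-M_2)^+
\]
exhibits $g$ as a linear combination of convex functions; in particular its distributional second derivative is a finite signed measure.

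Finally I would identify that measure $\mu$ by pairing with $\psi\in C_c^\infty((0,\infty))$: split $\int_0^\infty g\,\psi''$ at $M_1$ and $M_2$ and integrate by parts twice on $(M_1,M_2)$, using $g'=\phi$ there. The boundary terms at $M_1$ drop out since $g(M_1)=0$ and $\phi(M_1)=0$; the boundary terms at $M_2$ coming from $\int_{M_1}^{M_2}$ cancel those from $\int_{M_2}^\infty g\,\psi''$ except for one term proportional to $\psi(M_2)$ — an atom at $M_2$ with mass $(b_X-b_Y)(M_2-)$; and the only surviving volume term is $\int_{M_1}^{M_2}\phi'(a)\psi(a)\,da=\int\big((b_X'-b_Y')I_{(M_1,M_2)}\big)(a)\psi(a)\,da$. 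Assembling these gives the measure $\mu$ of \eqref{eq5.2}.

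I expect the one genuinely delicate point to be the behaviour at $M_2$: correctly locating the atom $(b_X-b_Y)(M_2-)\,\delta_{M_2}$ and pinning down its sign, and verifying that the $M_1$-boundary terms in the integration by parts really vanish — which is exactly where continuity of $b_X,b_Y$ at $M_1$ and their coincidence with the near-$0$ power law are used. The other ingredients — local integrability of $\phi$ near $0$, the $C^{1,1}$ property of $r$, and the standard fact that a continuous function with locally finite signed second-derivative measure is a difference of convex functions — are routine.
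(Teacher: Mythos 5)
Your overall strategy is the right one and it is close to the paper's: both identify the measure by pairing with a test function, integrating by parts, and observing that the $M_1$ boundary term dies (because $g(M_1)=0$ and $\phi(M_1^+)=0$) while a single surviving boundary term at $M_2$ produces the atom. Two genuine differences from the paper, both acceptable: (i) for the ``linear combination of convex functions'' part, the paper simply cites Roberts--Varberg (a bounded--variation derivative implies the decomposition), while you give a short explicit construction by killing the jump with $(y-M_2)^+$ and then adding and subtracting a parabola — that is more self-contained and perfectly fine; (ii) the paper performs a single integration by parts on $\int H'(b_X-b_Y)$, truncating to $(M_1+\vp,M_2-\vp)$ and letting $\vp\to0^+$, whereas you do two integrations by parts on $\int g\,\psi''$ after splitting at $M_1$ and $M_2$.

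There is, however, a concrete sign problem in your final step that you should fix. The paper's definition of the measure $\mu$, stated in its displayed identity preceding the computation, is
\[
\int H'\,(b_X-b_Y)\,da \;=\; \int H\,d\mu ,
\]
i.e.\ a \emph{single} integration by parts with no extra sign. If instead you compute the usual distributional pairing $\int g\,\psi''$, then $\int g\,\psi'' = -\int g'\,\psi'$, and your bookkeeping gives
\[
\int_0^\infty g\,\psi'' \;=\; -\,(b_X-b_Y)(M_2^-)\,\psi(M_2) \;+\; \int_{M_1}^{M_2}\phi'\,\psi ,
\]
which is the \emph{negative} of $\int\psi\,d\mu$ for the $\mu$ of \eqref{eq5.2}. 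Moreover, as written your stated conclusion is internally inconsistent: you claim the atom has mass $+(b_X-b_Y)(M_2^-)$ \emph{and} the volume density is $+\phi'$, which matches neither \eqref{eq5.2} (whose volume density is $-\phi' I_{(M_1,M_2)}$) nor the usual $\int g\,\psi''$ convention (which gives $-\phi(M_2^-)$ for the atom). The cleanest repair is to do exactly what the paper does — a single integration by parts on $\int_{M_1+\vp}^{M_2-\vp} H'\phi$, passing to the limit $\vp\to0^+$ — which produces
\[
\int H'\phi \;=\; H(M_2)\,\phi(M_2^-) \;-\;\int_{M_1}^{M_2} H\,\phi' ,
\]
and this is precisely $\int H\,d\mu$ for $\mu$ as in \eqref{eq5.2}. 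If you prefer your double-IBP route, you must carry the extra minus sign from $\int g\,\psi''=-\int g'\,\psi'$ explicitly and relate it to the paper's convention; as the proposal stands, the signs do not come out.
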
 

\begin{proof}
By our assumptions on $b_X$ and $b_Y$, $b_X-b_Y$ is of bounded variation on any bounded open subset $U$ of $(0,\infty)$, hence $g$ is a linear combination of convex functions on $U$ (Roberts and Varberg (1973), page 23). We now show its generalized second derivative is given by \eqref{eq5.2}.

\bigskip
\n Since $g'=b_X-b_Y$ a.e.,  it suffices to show for each $H\in C_0^\infty(0,\infty)$,
\begin{equation}\label{eq5.3}
\int\left(H'(b_X-b_Y)\right)(a)\,da=\int H(a)\,d\mu(a).
\end{equation}
To this end, let $H$ be so given. Then denoting the $C^1$ extensions of $b_X$, $b_Y$ on $(M_1,M_2)$ to $[M_1,M_2]$ by $\tilde b_X$, $\tilde b_Y$, respectively,
\begin{align*}
\int H'(b_X-b_Y)&=\int H'(\tilde b_X-\tilde b_Y)I_{(M_1,M_2)}
&\intertext{(since $b_X=b_Y$ on $(0,M_1]\cup[M_2,\infty)$)}
&=\lim_{\vp\to0^+}\int H'(\tilde b_X-\tilde b_Y)I_{(M_1+\vp,M_2-\vp)}\\
&=\lim_{\vp\to0^+}\left[(H(\tilde b_X-\tilde b_Y))(M_2-\vp)-(H(\tilde b_X-\tilde b_Y))(M_1+\vp)-\int_{M_1+\vp}^{M_2-\vp}H(\tilde b_X'-\tilde b_Y')\right]\\
&=\lim_{\vp\to0^+}\left[(H(b_X-b_Y))(M_2-\vp)-\left(H(b_X-b_Y)\right)(M_1+\vp)-\int_{M_1+\vp}^{M_2-\vp}H(b_X'-b_Y')\right]\\
&=\left(H(b_X-b_Y)\right)(M_2-)-\int_{M_1}^{M_2}H(b_X'-b_Y')
\intertext{(since $b_X$, $b_Y$ are continuous at $M_1$ and coincide there, and $\tilde b_X'-\tilde b_Y'=b_X'-b_Y'$ is bounded on $(M_1,M_2)$)}
&=H(M_2)(b_X-b_Y)(M_2-)-\int_{M_1}^{M_2}H(b_X'-b_Y')
\end{align*}
(since $H$ is continuous). This gives \eqref{eq5.2}.
\end{proof}

\begin{lem}\label{lem5.2}
Let $x>0$ and let
\begin{equation}\label{eq5.4}
\left\{\begin{array}{ll}
dX_t=dB_t+b_X(X_t)\,dt,&\quad X_0=x\\
\\
dY_t=dB_t+b_Y(Y_t)\,dt,&\quad Y_0=x
\end{array}\right.
\end{equation}
be such that $b_X$ and $b_Y$ satisfy \eqref{eq1.2} with the same $M_1$ and $M_2$. If $b_X$ and $b_Y$ are continuous on $(0,\infty)\backslash\{M_2\}$ and each restricted to $(M_1,M_2)$ has a $C^1$ extension to $[M_1,M_2]$, then for $g$ as in \eqref{eq5.1} and
\[
H=\left(b_X^2-b_Y^2\right)-\left(b_X'-b_Y'\right)I_{(M_1,M_2)}
\]
we have 
\begin{align}\label{eq5.5}
P_x(\tau_0(X)>t)=E_x\left[\exp\left(g(Y_t)-g(x)-\tfrac12\int_0^tH(Y_s)\,ds\right.\right.\notag\\
&\hspace{-2in}\left.\left.-\tfrac12(b_X-b_Y)(M_2-)\,\ell_t^{M_2}(Y)\vphantom{\int_0^t}\right)I(\tau_0(Y)>t)\right],
\end{align}
where $\ell_t^{M_2}(Y)$ is the local time of $Y$ at $M_2$.
\end{lem}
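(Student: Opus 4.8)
The plan is to exploit the same $h$-transform / Feynman--Kac mechanism that underlies Lemma~\ref{lem2.5}, but now with $Y$ (rather than Brownian motion) as the base process, so that the change of measure is governed by the \emph{difference} of the two drifts. Concretely, set $L_Y=\tfrac12\tfrac{d^2}{dx^2}+b_Y\tfrac{d}{dx}$, the generator of $Y$, and observe that the generator of $X$ can be written as $L_X=L_Y+(b_X-b_Y)\tfrac{d}{dx}$. Since $g'=b_X-b_Y$ a.e.\ and, by Lemma~\ref{lem5.1}, $g$ is a difference of convex functions with generalized second derivative $\mu$ given by \eqref{eq5.2}, the function $\rho=e^{g}$ formally satisfies $\tfrac{L_Y\rho}{\rho}=\tfrac12\big((b_X-b_Y)^2+(b_X-b_Y)'\big)$ in the distributional sense, where the ``$(b_X-b_Y)'$'' piece contributes the absolutely continuous part $-(b_X'-b_Y')I_{(M_1,M_2)}$ plus the atom $(b_X-b_Y)(M_2-)\delta_{M_2}$. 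The first step is therefore to record that $\rho$ is a positive function whose $L_Y$-image, computed via the generalized It\^o--Tanaka formula, produces exactly the exponent appearing in \eqref{eq5.5}.

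The second step is to apply the It\^o--Tanaka formula to $g(Y_t)$ on the stochastic interval $[0,\tau_{\vp,M}(Y)]$ for $0<\vp<x<M<\infty$. Because $g$ is a difference of convex functions, It\^o--Tanaka gives
\[
g(Y_t)-g(x)=\int_0^t g'(Y_s)\,dB_s+\int_0^t g'(Y_s)b_Y(Y_s)\,ds+\tfrac12\int_{(0,\infty)}\ell_t^{a}(Y)\,\mu(da),
\]
and plugging in $\mu$ from \eqref{eq5.2} turns the last term into $-\tfrac12\int_0^t (b_X'-b_Y')I_{(M_1,M_2)}(Y_s)\,ds+\tfrac12(b_X-b_Y)(M_2-)\ell_t^{M_2}(Y)$, using the occupation-time formula to rewrite $\int \ell_t^a(Y)(b_X'-b_Y')I_{(M_1,M_2)}(a)\,da=\int_0^t(b_X'-b_Y')I_{(M_1,M_2)}(Y_s)\,ds$. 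Rearranging, the exponent in \eqref{eq5.5} is precisely $\int_0^t g'(Y_s)\,dB_s-\tfrac12\int_0^t g'(Y_s)^2\,ds$, i.e.\ the Girsanov exponent associated with adding the drift $g'=b_X-b_Y$ to $Y$. (Here one uses $H=(b_X^2-b_Y^2)-(b_X'-b_Y')I_{(M_1,M_2)}$ and the identity $(b_X^2-b_Y^2)-(b_X-b_Y)^2=2b_Y(b_X-b_Y)=2b_Y g'$, so that $g'b_Y\,ds$ and the $-(b_X'-b_Y')I_{(M_1,M_2)}$ term combine with the quadratic variation term to give $-\tfrac12 H-\tfrac12(b_X-b_Y)(M_2-)\ell^{M_2}$ after subtracting the It\^o integral's compensator.) Then Girsanov's theorem, applied up to the bounded stopping time $t\wedge\tau_{\vp,M}(Y)$ where all the integrands are bounded, yields
\[
E_x\!\left[\exp\!\Big(\text{(exponent)}\Big)\,F\big(Y_{\cdot\wedge\tau_{\vp,M}}\big)\,I(\tau_{\vp,M}(Y)>t)\right]=E_x\!\left[F\big(X_{\cdot\wedge\tau_{\vp,M}}\big)\,I(\tau_{\vp,M}(X)>t)\right]
\]
for bounded path functionals $F$; taking $F\equiv 1$ gives $P_x(\tau_{\vp,M}(X)>t)$ on the right.

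The third step is to remove the localization: let $\vp\downarrow 0$ and $M\uparrow\infty$. On the right, $P_x(\tau_{\vp,M}(X)>t)\uparrow P_x(\tau_0(X)>t)$ by monotone convergence together with \eqref{eq1.3} (Lemma~\ref{lem2.1}), which guarantees $X$ does not explode to $\infty$. On the left, the integrand is nonnegative, so monotone convergence again applies and the limiting integrand is $\exp(g(Y_t)-g(x)-\tfrac12\int_0^t H(Y_s)\,ds-\tfrac12(b_X-b_Y)(M_2-)\ell_t^{M_2}(Y))\,I(\tau_0(Y)>t)$; one only needs the analogous non-explosion fact for $Y$, which holds because $b_Y$ has the same form \eqref{eq1.2} (with $\beta>0$ at infinity), so Lemma~\ref{lem2.1} applies to $Y$ as well. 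This gives \eqref{eq5.5}.

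The main obstacle is the rigorous handling of the generalized It\^o--Tanaka formula for the difference-of-convex-functions $g$, in particular tracking the local-time atom at $M_2$ and justifying that the occupation-time formula converts the absolutely continuous part of $\mu$ into an ordinary Lebesgue-in-time integral of $(b_X'-b_Y')I_{(M_1,M_2)}(Y_s)$; one must also check that the drift $g'=b_X-b_Y$ is regular enough (bounded measurable, continuous off $M_2$) for Girsanov's theorem to apply up to the bounded stopping time $t\wedge\tau_{\vp,M}(Y)$ — this is where the hypotheses that $b_X,b_Y$ agree off $(M_1,M_2)$ near $0$ and have $C^1$ extensions to $[M_1,M_2]$ do the real work, since they make $\mu$ a finite signed measure with a single atom. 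A secondary technical point is confirming that $\ell_t^{M_2}(Y)$ is finite and that the exponential in \eqref{eq5.5} is integrable, which follows because the local time is a.s.\ finite for fixed $t$ and, after the change of measure, the expression simply equals the probability $P_x(\tau_0(X)>t)\le 1$.
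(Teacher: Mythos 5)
Your proposal is correct and follows essentially the same route as the paper: Girsanov's theorem on the localized interval $[0,t\wedge\tau_{\vp,M}(Y)]$ (justified by boundedness/Novikov), the It\^o--Tanaka formula together with Lemma~\ref{lem5.1} and the occupation-time formula to eliminate the stochastic integral in the Girsanov exponent, and then Monotone Convergence with Lemma~\ref{lem2.1} to remove the localization. The only cosmetic difference is that you verify the identity ``target exponent $=\int g'(Y_s)\,dB_s-\tfrac12\int g'(Y_s)^2\,ds$'' directly, whereas the paper substitutes the It\^o--Tanaka expression into the Girsanov exponent and simplifies; the $h$-transform framing in your first paragraph is purely motivational and not needed for the argument.
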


\begin{proof}
The idea is similar to the one used by Pinsky (1995) to get the formula cited in Lemma \ref{lem2.5}: use the Girsanov Theorem and eliminate the stochastic integral in the martingale measure. Our reference for the particular form of the Girsanov Theorem that we use is Theorem 8.6.6 in Oksendal (2007).

\bigskip
\n Let $0<\vp<M_1\leq M_2<M$ such that $x\in(\vp,M)$. Then since $b_X-b_Y$ is bounded on $(\vp,M)$, the ``Local Novikov Condition'' holds:
\[
E_x\left[\exp\left(\tfrac12\int_0^{t\wedge\tau_{\vp,M}(Y)}(b_X-b_Y)^2(Y_s)\,ds\right)\right]<\infty.
\]
It follows that
\[
M_t=\exp\left(\int_0^{t\wedge\tau_{\vp,M}(Y)}(b_X-b_Y)(Y_s)\,dB_s-\tfrac12\int_0^{t\wedge\tau_{\vp,M}(Y)}(b_X-b_Y)^2(Y_s)\,ds\right)
\]
is a martingale. By Girsanov's Theorem,
\begin{align}\label{eq5.6}
P_x(\tau_{\vp,M}(X)>t)&=E_x\left[M_tI(\tau_{\vp,M}(Y)>t)\right]\notag\\
&=E_x\left[\exp\left(\int_0^t(b_X-b_Y)(Y_s)\,dB_s\right.\right.\notag\\
&\qquad\left.\left.-\tfrac12\int_0^t(b_X-b_Y)^2(Y_s)\,ds\right)I(\tau_{\vp,M}(Y)>t)\right].
\end{align}
Now we eliminate the stochastic integral in \eqref{eq5.6}. By Lemma \ref{lem5.1}, we can apply the It\^o-Tanaka formula (Revuz and Yor (1991)) to get for $\tau_{\vp,M}(Y)>t$,
\[
g(Y_t)=g(x)+\int_0^tg_-'(Y_s)\left[dB_s+b_Y(Y_s)\,ds\right]+\tfrac12\int \ell^a_t(Y)\mu(da),
\]
where $g_-'$ is the left derivative and $\mu(da)$ is the generalized second derivative of $g$ from \eqref{eq5.2} in Lemma \ref{lem5.1}. By our hypotheses on $b_X$ and $b_Y$, $g_-'=b_X-b_Y$ a.e. on $(0,\infty)$. Using this, the occupation times formula (Revuz and Yor (1991)) and Lemma \ref{lem5.1} gives, for $\tau_{\vp,M}(Y)>t$,
\begin{align*}
g(Y_t)&=g(x)+\int_0^t(b_X-b_Y)(Y_s)\left[dB_s+b_Y(Y_s)\,ds\right]\\
&\qquad+\tfrac12\left[(b_X-b_Y)(M_2-)\,\ell^{M_2}_t(Y)-\int_0^t((b_X'-b_Y')I_{(M_1,M_2)})(Y_s)\,ds\right]\\
\end{align*}
Solving for the stochastic integral, we get, for $\tau_{\vp,M}(Y)>t$,
\begin{align*}
\int_0^t(b_X-b_Y)(Y_s)\,dB_s&=g(Y_t)-g(x)-\int_0^t((b_X-b_Y)b_Y)(Y_s)\,ds\\
&\qquad+\tfrac12\int_0^t\left((b'_X-b'_Y)I_{(M_1,M_2)}\right)(Y_s)\,ds-\tfrac12(b_X-b_Y)(M_2-)\,\ell_t^{M_2}(Y).
\end{align*}
Substituting this into the exponential in \eqref{eq5.6} gives
\begin{align*}
P_x(\tau_{\vp,M(X)}>t)&=E_x\left[\exp\left(g(Y_t)-g(x)-\int_0^t((b_X-b_Y)b_Y)(Y_s)\,ds\right.\right.\hspace{1in}\notag\\
&\qquad+\tfrac12\int_0^t\left((b'_X-b'_Y)I_{(M_1,M_2)}\right)(Y_s)\,ds-\tfrac12(b_X-b_Y)(M_2-)\,\ell_t^{M_2}(Y)\\
&\qquad\qquad\left.\left.-\tfrac12\int_0^t(b_X-b_Y)^2(Y_s)\,ds\right)I(\tau_{\vp,M}(Y)>t)\right]\\
&=E_x\left[\exp\left(g(Y_t)-g(x)-\tfrac12\int_0^tH(Y_s)\,ds\right.\right.\notag\\
&\qquad\left.\left.-\tfrac12(b_X-b_Y)(M_2-)\,\ell_t^{M_2}(Y)\vphantom{\int_0^t}\right)I(\tau_{\vp,M}(Y)>t)\right].
\end{align*}
By Monotone Convergence and Lemma \ref{lem2.1}, upon letting $\vp\downarrow0$ and $M\uparrow\infty$, we get the desired conclusion.
\end{proof}

\bigskip
\n Next, we prove a variant of the previous results applicable to the case $\alpha<0$.

\begin{lem}\label{lem5.3}
Let $b_1$ and $b_2$ satisfy \eqref{eq1.2} with the same $\beta>0$, $M_1$ and $M_2$, but the corresponding $\alpha$'s---call them $\alpha_1$ and $\alpha_2$, respectively---are different. Suppose $b_1$ and $b_2$ are continuous on $(0,\infty)$ and for some $M\in(M_1,M_2)$, $b_1-b_2$ restricted to $(M_1,M)$ and $(M,M_2)$ has $C^1$ extensions to $[M_1,M]$ and $[M,M_2]$, respectively. Then the function
\begin{equation}\label{eq5.7}
g(y)=\int_0^y\left(b_1-b_2\right)(z)\,dz
\end{equation}
is a linear combination of convex functions with generalized second derivative $\mu$ given by
\begin{equation}\label{eq5.8}
\mu(A)=-\int_A\left(\left(b_1'-b_2'\right)I_{(M_1,M_2)}\right)(a)\,da
\end{equation}
for any Borel set $A\subseteq(0,\infty)$. 
\end{lem}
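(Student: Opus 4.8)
The plan is to follow the proof of Lemma~\ref{lem5.1} almost verbatim. The one new feature is that here $b_1-b_2$ is \emph{continuous on all of} $(0,\infty)$ --- not merely off a single point --- so that, in contrast with \eqref{eq5.2}, I expect $\mu$ to have no atomic part; the auxiliary point $M\in(M_1,M_2)$ is present only to allow $b_1-b_2$ to have a kink there, which will turn out to contribute nothing.

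\emph{Difference-of-convex structure.} First I would fix an open set $U$ whose closure is a compact subset of $(0,\infty)$ and check that $b_1-b_2$ has bounded variation on $U$: on $U\cap(0,M_1]$ it is the monotone function $x\mapsto(\alpha_2-\alpha_1)x^{-q}$, bounded there since $U$ is bounded away from $0$; on $U\cap[M_1,M]$ and on $U\cap[M,M_2]$ it extends to a $C^1$ function; on $U\cap[M_2,\infty)$ it vanishes identically; and it is continuous across $M_1$, $M$, $M_2$. By Roberts and Varberg (1973), $g$ is then a linear combination of convex functions on $U$, and since $U$ was arbitrary the first assertion follows.

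\emph{Identification of $\mu$.} Since $g'=b_1-b_2$ a.e., it suffices, exactly as in Lemma~\ref{lem5.1}, to verify
\[
\int\big(H'(b_1-b_2)\big)(a)\,da=\int H(a)\,d\mu(a)\qquad\text{for every }H\in C_0^\infty(0,\infty).
\]
I would split the left-hand side over the four intervals $(0,M_1)$, $(M_1,M)$, $(M,M_2)$, $(M_2,\infty)$; the contribution of $(M_2,\infty)$ is zero because $b_1=b_2$ there. On each of the remaining three intervals $b_1-b_2$ extends to a $C^1$ function up to the endpoints, so an integration by parts there produces $-\int H(b_1'-b_2')$ together with boundary terms of the form $H(\cdot)(b_1-b_2)(\cdot)$ evaluated at the endpoints. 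At $M_1$, $M$ and $M_2$ the adjacent boundary terms cancel in pairs because $b_1-b_2$ is \emph{continuous} there --- this is precisely where continuity of $b_1$ and $b_2$ enters, and it is why no point mass arises here, whereas in Lemma~\ref{lem5.1} the jump of $b_X-b_Y$ at $M_2$ produced $\delta_{M_2}$ (a continuous kink of $b_1-b_2$ at $M$ produces a jump, not an atom, in the second derivative). The one remaining boundary term, at the left endpoint of the support of $H$, vanishes because $H$ has compact support in $(0,\infty)$. Collecting the pieces gives $\int H'(b_1-b_2)=-\int H(b_1'-b_2')$, with $b_1'-b_2'$ the locally integrable a.e.\ derivative of $b_1-b_2$ (which vanishes on $[M_2,\infty)$); this is $\int H\,d\mu$ with $\mu$ the absolutely continuous measure in \eqref{eq5.8}.

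\emph{The hard part.} The only genuine subtlety is the behaviour near $0$: one must establish the bounded-variation / difference-of-convex property and carry out the integration by parts although $b_1-b_2\sim x^{-q}$ and $(b_1-b_2)'\sim x^{-q-1}$ are singular at the origin. This is harmless: $0<q<1$ makes $x^{-q}$ locally integrable, $x^{-q-1}$ is bounded on every set bounded away from $0$, and each test function $H$ vanishes in a neighbourhood of $0$, so the singular endpoint is never felt. Once this is observed, the rest is a routine transcription of the proof of Lemma~\ref{lem5.1}; Lemma~\ref{lem2.1} is what would later justify, in the Girsanov / It\^o--Tanaka application of \eqref{eq5.8}, the limiting passage corresponding to the one at the end of that proof.
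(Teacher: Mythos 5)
Your approach mirrors the paper's own (one-sentence) proof---``This is similar to the proof of Lemma~\ref{lem5.1}, except\ldots there will be no point mass term''---carried out in detail, and the difference-of-convex structure and the integration-by-parts computation are correct, including the observation that the boundary terms at $M_1$, $M$ and $M_2$ cancel by continuity of $b_1-b_2$ and that $(b_1-b_2)(M_2-)=0$ kills the term that produced $\delta_{M_2}$ in Lemma~\ref{lem5.1}. The trouble is the very last sentence. Your calculation yields
\[
\int H'(b_1-b_2)=-\int H(b_1'-b_2'),
\]
a measure whose density $-(b_1'-b_2')$ is supported on $(0,M_2)$ (it vanishes on $[M_2,\infty)$ because $b_1=b_2$ there), \emph{not} on $(M_1,M_2)$. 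Indeed on $(0,M_1]$ one has $b_1(x)-b_2(x)=(\alpha_2-\alpha_1)x^{-q}$ with $\alpha_1\neq\alpha_2$, so $(b_1'-b_2')(x)=-q(\alpha_2-\alpha_1)x^{-q-1}\neq0$; this part of the density does not disappear. Thus the measure you actually derived is not the one written in \eqref{eq5.8}, and the closing identification is a non-sequitur.

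What your computation in fact reveals is that \eqref{eq5.8} as printed cannot be right. The indicator $I_{(M_1,M_2)}$ is inherited from \eqref{eq5.2}, where it is justified because in Lemma~\ref{lem5.1} the two drifts agree on $(0,M_1]\cup[M_2,\infty)$; here, $\alpha_1\neq\alpha_2$ by hypothesis, so $b_1\neq b_2$ on $(0,M_1]$ and the generalized second derivative picks up a nonzero contribution there. The correct statement should carry $I_{(0,M_2)}$ (equivalently, no indicator, since $b_1'-b_2'$ already vanishes on $[M_2,\infty)$), and correspondingly the $H$ of Lemma~\ref{lem5.4} should use $I_{(0,M_2)\setminus\{M\}}$. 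The downstream application in Lemma~\ref{lem6.6} survives the correction---on $(0,M_1)$ one has $b'-b_Y'=2b'\leq0$, so the added piece of $H$ is still nonnegative---but you should flag this discrepancy rather than assert the match to \eqref{eq5.8}.
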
 

\begin{proof}
This is similar to the proof of Lemma \ref{lem5.1}, except because of the continuity of $b_1$ and $b_2$ on $(0,\infty)$, there will be no point mass term in the generalized second derivative.
\end{proof}

\begin{lem}\label{lem5.4}
Let $x>0$ and let
\begin{equation}\label{eq5.9}
\left\{\begin{array}{ll}
dX_t=dB_t+b_X(X_t)\,dt,&\quad X_0=x\\
\\
dY_t=dB_t+b_Y(Y_t)\,dt,&\quad Y_0=x
\end{array}\right.
\end{equation}
be such that $b_1$ and $b_2$ satisfy \eqref{eq1.2} with the same $\beta>0$, $M_1$ and $M_2$, but the corresponding $\alpha$'s---call them $\alpha_1$ and $\alpha_2$, respectively---are different. Suppose $b_1$ and $b_2$ are continuous on $(0,\infty)$ and for some $M\in(M_1,M_2)$, $b_1-b_2$ restricted to $(M_1,M)$ and $(M,M_2)$ has $C^1$ extensions to $[M_1,M]$ and $[M,M_2]$. Then for $g$ as in \eqref{eq5.7} and
\[
H=\left(b_1^2-b_2^2\right)-\left(b_1'-b_2'\right)I_{(M_1,M_2)\backslash\{M\}}
\]
we have 
\begin{equation}\label{eq5.10}
P_x(\tau_0(X)>t)=E_x\left[\exp\left(g(Y_t)-g(x)-\tfrac12\int_0^tH(Y_s)\,ds\right)I(\tau_0(Y)>t)\right].
\end{equation}
\end{lem}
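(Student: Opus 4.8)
The plan is to mimic the proof of Lemma \ref{lem5.2} almost verbatim, with the simplification that the generalized second derivative of $g$ carries no point mass. First I would fix $0<\vp<M_1\leq M_2<M'$ with $x\in(\vp,M')$ (I will write $M'$ for the large truncation level, to avoid clashing with the interior point $M$ of the interval $(M_1,M_2)$). Since $b_1-b_2$ vanishes on $(0,M_1]\cup[M_2,\infty)$ and is bounded on the compact middle piece, $b_1-b_2$ is bounded on $(0,\infty)$, so the local Novikov condition
\[
E_x\left[\exp\left(\tfrac12\int_0^{t\wedge\tau_{\vp,M'}(Y)}(b_1-b_2)^2(Y_s)\,ds\right)\right]<\infty
\]
holds and the exponential local martingale $M_t$ built from the stochastic integral of $(b_1-b_2)(Y_s)\,dB_s$ is a genuine martingale. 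Applying Girsanov's theorem (Theorem 8.6.6 in Oksendal (2007)) exactly as in \eqref{eq5.6} gives
\[
P_x(\tau_{\vp,M'}(X)>t)=E_x\left[\exp\left(\int_0^t(b_1-b_2)(Y_s)\,dB_s-\tfrac12\int_0^t(b_1-b_2)^2(Y_s)\,ds\right)I(\tau_{\vp,M'}(Y)>t)\right].
\]

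Next I would eliminate the stochastic integral using the It\^o-Tanaka formula. By Lemma \ref{lem5.3}, $g$ is a difference of convex functions with $g_-'=b_1-b_2$ a.e.\ and generalized second derivative $\mu(da)=-\big((b_1'-b_2')I_{(M_1,M_2)}\big)(a)\,da$; note the $C^1$-extension hypothesis is imposed separately on $(M_1,M)$ and $(M,M_2)$, so $b_1'-b_2'$ need only be defined off the single point $M$, and since $\mu$ has a density the value of the integrand at $M$ is irrelevant — this is why the indicator in $H$ is written $I_{(M_1,M_2)\backslash\{M\}}$. The It\^o-Tanaka formula together with the occupation times formula yields, on $\{\tau_{\vp,M'}(Y)>t\}$,
\[
g(Y_t)=g(x)+\int_0^t(b_1-b_2)(Y_s)\big[dB_s+b_2(Y_s)\,ds\big]-\tfrac12\int_0^t\big((b_1'-b_2')I_{(M_1,M_2)\backslash\{M\}}\big)(Y_s)\,ds,
\]
with no local-time term because $\mu$ is absolutely continuous. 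Solving for $\int_0^t(b_1-b_2)(Y_s)\,dB_s$ and substituting into the exponential above, the drift terms combine via $(b_1-b_2)b_2+\tfrac12(b_1-b_2)^2=\tfrac12(b_1^2-b_2^2)$ into $-\tfrac12\int_0^t H(Y_s)\,ds$ with $H$ as stated, giving
\[
P_x(\tau_{\vp,M'}(X)>t)=E_x\left[\exp\left(g(Y_t)-g(x)-\tfrac12\int_0^tH(Y_s)\,ds\right)I(\tau_{\vp,M'}(Y)>t)\right].
\]
Finally, letting $\vp\downarrow0$ and $M'\uparrow\infty$ and invoking Lemma \ref{lem2.1} (so that $\tau_0(Y)<\infty$ a.s.\ and $\tau_{\vp,M'}(Y)>t$ increases to $\tau_0(Y)>t$) together with monotone convergence on the right-hand side delivers \eqref{eq5.10}.

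I do not expect a serious obstacle here: this is the Lemma \ref{lem5.2} argument with the point-mass term deleted. The one point requiring a little care is justifying the passage to the limit in $\vp$ and $M'$ on the right-hand side — the integrand is not obviously bounded — but this is handled exactly as in Lemma \ref{lem5.2}: monotone convergence applied to $I(\tau_{\vp,M'}(Y)>t)\uparrow I(\tau_0(Y)>t)$ after noting that the process $Y$ does not explode and the bounded-variation/local-time machinery is applied pathwise on each compact interval $(\vp,M')$, with Lemma \ref{lem2.1} guaranteeing the event $\{\tau_{M'}(Y)\le t\}$ has probability tending to $0$.
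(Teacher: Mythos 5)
Your proposal is correct and follows essentially the same route the paper intends: the paper's own proof of Lemma \ref{lem5.4} is a single sentence pointing to the argument of Lemma \ref{lem5.2} with the observation that, since $b_1$ and $b_2$ are continuous on $(0,\infty)$, the generalized second derivative of $g$ (from Lemma \ref{lem5.3}) is absolutely continuous and hence the local-time term drops out. Your spelled-out version — Girsanov via the local Novikov condition, It\^o-Tanaka plus the occupation times formula with no point mass, the algebraic combination $(b_1-b_2)b_2+\tfrac12(b_1-b_2)^2=\tfrac12(b_1^2-b_2^2)$, and the monotone-convergence passage $\vp\downarrow 0$, $M'\uparrow\infty$ using Lemma \ref{lem2.1} — is exactly that modification, carried out carefully, including the correct remark that deleting the single point $M$ from the indicator in $H$ is harmless because $\mu$ has a density.
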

\begin{proof}
This is a simple modification of the proof of Lemma \ref{lem5.1}, where again, because of the continuity of $b_1$ and $b_2$ on $(0,\infty)$, there will be no local time term. 
\end{proof}

%%%%%%%%%%%%%%%%%%%%%%%%%%%%%%%%%%%%%%%%%%%%%%%%%%%%%%%%%%%%%%%%%%%%%%%%%%%%%%%%%%%%%%
\section{Upper Bound}\label{sec6}

The main result of this section is the following theorem, which combined with Theorem \ref{thm3.1} will prove Theorem \ref{thm1.1}.

\begin{thm}\label{thm6.1} Let $X_t$ be as in \eqref{eq1.1}, where $b$ is from \eqref{eq1.2}. Then
\[
\limsup_{t\to\infty}t^{-(1-p)/(1+p)}\log P_x(\tau_0(X)>t)\leq-\,\gamma(p,\beta),
\]
where $\gamma(p,\beta)$ is from \eqref{eq1.4}.

\end{thm}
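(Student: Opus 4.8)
\textbf{Proof proposal for Theorem \ref{thm6.1}.}

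The plan is to reduce the general case to the special $C^3$ case already established in Theorem \ref{thm4.1} by a sequence of comparisons that progressively ``repair'' the three defects of the general $b$ from \eqref{eq1.2}: it may have a negative $\alpha$, it need not be monotone on $[M_1,M_2]$ (nor even $C^1$ there), and it need not be $C^3$. The upper bound is the ``easy'' direction for monotonicity in the sense that we want to \emph{dominate} $P_x(\tau_0(X)>t)$, so we must find a comparison process $\widetilde X$ with $P_x(\tau_0(X)>t)\le C\,P_x(\tau_0(\widetilde X)>t)$ (or with the two quantities comparable up to factors that vanish on the logarithmic scale after multiplying by $t^{-(1-p)/(1+p)}$), where $\widetilde X$ has a drift satisfying the hypotheses of Theorem \ref{thm4.1}.

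First I would handle the sign of $\alpha$. If $\alpha<0$, the drift $-\alpha x^{-q}$ near $0$ is \emph{repelling} from $0$; I would use Lemma \ref{lem5.4} (the $\alpha<0$ variant of the drift-transformation formula, which has no local-time term) to rewrite $P_x(\tau_0(X)>t)$ as an expectation against a process $Y$ whose drift has $\alpha\ge0$ (e.g. $\alpha=0$ near the origin), picking up the factor $\exp(g(Y_t)-g(x)-\frac12\int_0^t H(Y_s)\,ds)$. The function $g$ from \eqref{eq5.7} is bounded near $0$ and grows only polynomially (like $x^{1-q}$) near infinity, while $H=(b_1^2-b_2^2)-(b_1'-b_2')I_{(M_1,M_2)\setminus\{M\}}$ is, on $[M_2,\infty)$, identically zero (the $\beta$-parts cancel) and on $(0,M_1]$ is of the form $-(\alpha_1^2-\alpha_2^2)x^{-2q}-(\text{lower order})$, which is \emph{nonpositive} up to a bounded correction once we arrange $|\alpha_1|\le|\alpha_2|$ appropriately; so $-\frac12\int_0^t H(Y_s)\,ds$ is bounded above by a constant times $t$ contributed only on the compact window $[M_1,M_2]$, and the $g(Y_t)$ term contributes at most polynomially in $\sup_{[0,t]}Y$. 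Standard Gaussian-type tail estimates for $Y$ (exactly as in Lemma \ref{lem3.4}, using that $Y$ is dominated above by a Brownian motion with bounded-above drift on the relevant region, or directly via Novikov/exponential moment bounds) show these extra factors are $\exp(o(1/\vp))$ with $\vp=t^{-(1-p)/(1+p)}$, hence harmless after multiplying by $\vp$ and taking $\limsup$. The same device via Lemma \ref{lem5.2} removes positive excursions of $b$ on $[M_1,M_2]$: I would choose $b_Y$ agreeing with $b_X$ off $(M_1,M_2)$ but replaced inside by a smooth nonpositive, nondecreasing function, absorbing the difference into the exponential factor (now including the local-time term $-\frac12(b_X-b_Y)(M_2-)\ell_t^{M_2}(Y)$; by choosing $b_Y(M_2-)=b_X(M_2-)=-\beta M_2^{-p}$ this coefficient vanishes, or at worst the local-time term is controlled since $\ell_t^{M_2}(Y)=O(t^{1/2})$ in $L^k$).

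Having reduced to $\alpha\ge0$ and $b$ nonpositive and smoothable on the window, I would finally smooth $b$ to a $C^3$ function with $b'\ge0$ on all of $(0,\infty)$: near $0$ the drift $-\alpha x^{-q}$ already has $b'=\alpha q x^{-q-1}\ge0$, near infinity $-\beta x^{-p}$ has $b'=\beta p x^{-p-1}\ge0$, so only the compact interval $[M_1,M_2]$ needs adjustment, and since the endpoint values $b(M_1)=-\alpha M_1^{-q}$ and $b(M_2-)=-\beta M_2^{-p}$ are fixed (with $-\alpha M_1^{-q}\le -\beta M_2^{-p}$ possibly needing a slight enlargement of the window $[\widetilde M_1,\widetilde M_2]$ as in the proof of Lemma \ref{lem2.1}), one can interpolate by a $C^3$ nondecreasing function; a further comparison $b\le\widetilde b$ via the SDE Comparison Theorem (Ikeda--Watanabe) then gives $P_x(\tau_0(X)>t)\le P_x(\tau_0(\widetilde X)>t)$ directly, and Theorem \ref{thm4.1} applied to $\widetilde X$ yields $\limsup_{t\to\infty}t^{-(1-p)/(1+p)}\log P_x(\tau_0(\widetilde X)>t)\le-\gamma(p,\beta)$. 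Note $\widetilde b$ has the same $\beta$ and $p$ at infinity, so the constant is unchanged.

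The main obstacle I anticipate is bookkeeping the exponential correction factors from the Girsanov-type transformations of Lemmas \ref{lem5.2} and \ref{lem5.4}: one must verify that $g(Y_t)$, the running integral $\int_0^t H(Y_s)\,ds$, and the local time $\ell_t^{M_2}(Y)$ all contribute only $\exp(o(t^{(1-p)/(1+p)}))$ to the expectation. The delicate point is that $Y$ itself can reach values of order $\sqrt t$ under the event $\{\tau_0(Y)>t\}$, so $g(Y_t)\sim Y_t^{1-q}$ could a priori be of order $t^{(1-q)/2}$; one checks $(1-q)/2<(1-p)/(1+p)$ is \emph{not} always true, so instead one must split on $\{\sup_{[0,t]}Y\le t^{1/2+\eta}\}$ and its complement, bounding the complement by Gaussian tails (which beat any polynomial-in-$t$ growth of $g$) exactly in the style of \eqref{eq3.12}--\eqref{eq3.14}, and on the main event controlling $g(Y_t)$ by $t^{(1-q)(1/2+\eta)}=o(t/\vp^{-1})$—wait, more carefully, one needs this to be $o(1/\vp)=o(t^{(1-p)/(1+p)})$, which requires choosing the splitting level and $\eta$ with care, or equivalently pairing the growth of $g$ against the decay $e^{-\frac{\beta}{1-p}Y_t^{1-p}}$ already present inside the expectation (from $h$, equivalently from the comparison structure), which dominates any polynomial. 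Making this pairing precise—so that the ``bad'' polynomial growth of $g$ is always beaten either by a Gaussian tail or by the built-in stretched-exponential weight—is the crux of the argument; the rest is routine.
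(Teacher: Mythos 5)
Your high-level outline is the same as the paper's: reduce the general $b$ of \eqref{eq1.2} to the $C^3$, $\alpha\geq 0$, $b'\geq 0$ situation of Theorem \ref{thm4.1} by a mix of SDE comparison and the drift-transformation formulas of Lemmas \ref{lem5.2} and \ref{lem5.4}, and to treat $\alpha>0$, $\alpha<0$, $\alpha=0$ separately. But the two technical points you flag as ``the crux'' and ``the main obstacle'' are not the real difficulties, and the real difficulty is not solved in the proposal.

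First, the growth of $g$ at infinity is a non-issue. In every application of Lemma \ref{lem5.2} or \ref{lem5.4} one keeps $b_X-b_Y\equiv 0$ on $[M_2,\infty)$ (both drifts agree with $-\beta x^{-p}$ there), so $g(y)=\int_0^y(b_X-b_Y)$ is constant for $y\geq M_2$; and since $0<q<1$, $b_X-b_Y$ is integrable near $0$, so $g$ is bounded near $0$ as well. Thus $g$ is globally bounded, and the $\exp(g(Y_t)-g(x))$ factor is harmless with no splitting on $\sup_{[0,t]}Y$, no Gaussian tail estimates, and no appeal to the stretched-exponential weight $e^{-\frac{\beta}{1-p}Y_t^{1-p}}$ (which, incidentally, does not appear in \eqref{eq5.5} or \eqref{eq5.10} at all — those are Girsanov identities between two diffusions, not $h$-transforms).

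Second, and more seriously, the term you would actually have to control is $\exp\big(-\tfrac12\int_0^t H(Y_s)\,ds\big)$, and your proposal does not do so. You suggest arranging $H$ to be ``nonpositive up to a bounded correction'' and then absorbing the resulting $\exp(Ct)$ by tail estimates; this cannot work, because $\vp\cdot Ct=Ct^{2p/(1+p)}\to\infty$, so any genuinely order-$t$ contribution inside the exponent destroys the bound. One must actually arrange $H\geq 0$ so that the factor is $\leq 1$. (You also have a sign slip: with $H=b_X^2-b_Y^2-\cdots$ and $b_X=-\alpha_1 x^{-q}$, $b_Y=-\alpha_2 x^{-q}$ near $0$, one gets $H=(\alpha_1^2-\alpha_2^2)x^{-2q}$ there, which is $\geq 0$ when $|\alpha_1|\geq|\alpha_2|$, not $\leq$.) The paper's proof contains two specific devices, both absent from your proposal, to force $H\geq 0$. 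For $\alpha>0$ it first dominates $b$ by a drift equal to a constant $C>0$ on $(M_1,M_2)$, then compares against linear $\tilde b_X,\tilde b_Y$ through carefully chosen endpoints and shows (Lemma \ref{lem6.3}) that making the slope $\gamma$ large forces $\tilde b_X^2-\tilde b_Y^2-(\tilde b_X'-\tilde b_Y')\geq 0$; moreover $(b_X-b_Y)(M_2-)=\gamma-y_2>0$, so the local-time term is also $\leq 1$ as is. For $\alpha<0$ the trick is to flip the sign of $b$ on $(0,x_1]$ (taking $b_Y=-b$ there, with $b(x_1)=0$), which preserves $b^2$ and makes $b'-b_Y'\leq 0$, hence $H\geq 0$. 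Neither idea appears in your proposal, and without them the Girsanov correction factor is not under control. The rest of your outline (reduce $\alpha=0$ by comparison, smooth on the compact window, invoke Theorem \ref{thm4.1}) is consistent with the paper.
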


\bigskip
\n We break up the proof into the cases when $\alpha>0$, $\alpha<0$ and $\alpha=0$. 

\bigskip\n\emph{First case: $\alpha>0$.} 

\n We remove the assumptions that $b\in C^3$ and $b'\geq 0$ in Theorem \ref{thm4.1}.

\begin{lem}\label{lem6.2} Under the condition \eqref{eq1.2}, if $\alpha>0$, then the solution $X_t$ of \eqref{eq1.1} satisfies
\[
\limsup_{t\to\infty}\log P_x(\tau_0(X)>t)\leq-\,\gamma(p,\beta),
\]
where $\gamma(p,\beta)$ is from \eqref{eq1.4}. 
\end{lem}

\n The proof uses several lemmas. Here is the basic setup. Let $0<x_1<x_2$, $y_1<y_2<0$ and $\gamma>0$ be given. Consider the lines
\begin{equation}\label{eq6.1}
\tilde b_X(x)=\frac{\gamma-y_1}{x_2-x_1}(x-x_1)+y_1,\quad x>0
\end{equation}
and
\begin{equation}\label{eq6.2}
\tilde b_Y(x)=\frac{y_2-y_1}{x_2-x_1}(x-x_1)+y_1,\quad x>0.
\end{equation}
Then $\tilde b_X$ passes through the points $(x_1,y_1)$ and $(x_2,\gamma)$ and $\tilde b_Y$ passes through $(x_1,y_1)$ and $(x_2,y_2)$.
\begin{lem}\label{lem6.3}
For $\gamma$ sufficiently large,
\[
\tilde b_X^2-\tilde b_Y^2-(\tilde b_X'-\tilde b_Y')\geq 0\text{ on $[x_1,x_2]$.}
\]
\end{lem}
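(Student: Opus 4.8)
\textbf{Proof proposal for Lemma \ref{lem6.3}.}

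The plan is to compute both sides explicitly using the affine formulas \eqref{eq6.1} and \eqref{eq6.2} and then choose $\gamma$ large. First I would record the slopes: writing $m_X=(\gamma-y_1)/(x_2-x_1)$ and $m_Y=(y_2-y_1)/(x_2-x_1)$, we have $\tilde b_X'\equiv m_X$, $\tilde b_Y'\equiv m_Y$, so $\tilde b_X'-\tilde b_Y'=(\gamma-y_2)/(x_2-x_1)$, a constant. Next I would analyze $\tilde b_X^2-\tilde b_Y^2=(\tilde b_X-\tilde b_Y)(\tilde b_X+\tilde b_Y)$ on $[x_1,x_2]$: the factor $\tilde b_X-\tilde b_Y$ is the affine function vanishing at $x_1$ and equal to $\gamma-y_2$ at $x_2$, hence equals $(\gamma-y_2)(x-x_1)/(x_2-x_1)\geq 0$ for $x\in[x_1,x_2]$ once $\gamma>y_2$; it grows linearly in $\gamma$. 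The factor $\tilde b_X+\tilde b_Y$ is affine with value $2y_1$ at $x_1$ and $\gamma+y_2$ at $x_2$, so for $\gamma$ large it is positive on a subinterval near $x_2$ but can be negative near $x_1$.

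The key observation that makes the estimate work is that near $x_1$, where $\tilde b_X+\tilde b_Y$ may be negative, the factor $\tilde b_X-\tilde b_Y$ is small (it vanishes at $x_1$), so the product $\tilde b_X^2-\tilde b_Y^2$ is controlled there; meanwhile the subtracted term $(\gamma-y_2)/(x_2-x_1)$ is a fixed constant (for fixed $\gamma$) that does not blow up. More precisely, I would bound $|\tilde b_X+\tilde b_Y|\leq C_0+C_1\gamma$ on $[x_1,x_2]$ for constants depending only on $x_1,x_2,y_1,y_2$, and bound $\tilde b_X-\tilde b_Y\geq 0$ with $\tilde b_X-\tilde b_Y=(\gamma-y_2)(x-x_1)/(x_2-x_1)$. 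Then split $[x_1,x_2]$ at the point $x^\ast$ where $\tilde b_X+\tilde b_Y=0$ (if it lies in the interval): on $[x^\ast,x_2]$ both factors of $\tilde b_X^2-\tilde b_Y^2$ are nonnegative so the whole expression is $\geq -(\gamma-y_2)/(x_2-x_1)$, which... is not obviously nonnegative. So instead I would argue more carefully: on the region where $\tilde b_X+\tilde b_Y\geq \eta>0$ (a fixed fraction of $[x_1,x_2]$ near $x_2$ whose length is bounded below independent of $\gamma$ once $\gamma$ is large), $\tilde b_X-\tilde b_Y$ is bounded below by a positive multiple of $\gamma$ times the distance from $x_1$, which on that subregion is $\geq$ a fixed positive constant; hence $\tilde b_X^2-\tilde b_Y^2\geq c\gamma$ there, dominating the constant $(\gamma-y_2)/(x_2-x_1)$ for $\gamma$ large. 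On the complementary region near $x_1$, where $\tilde b_X+\tilde b_Y$ may be negative, one uses that $|\tilde b_X+\tilde b_Y|$ is at most its value $|2y_1|$ at $x_1$ plus a linear-in-$\gamma$ term, but crucially the sign issue only occurs for $x$ within distance $O(1/\gamma)$ of $x_1$, on which $\tilde b_X-\tilde b_Y=O(1)$ and $\tilde b_X+\tilde b_Y=O(1)$, so the product is $O(1)$ and the worst-case negative contribution is a fixed constant; but again this does not beat the constant being subtracted.

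The cleanest route — and the one I would actually carry out — is a direct quadratic-in-$\gamma$ analysis: substitute $x=x_1+s(x_2-x_1)$, $s\in[0,1]$, and write $\tilde b_X=\gamma s+y_1(1-s)$, $\tilde b_Y=y_2 s+y_1(1-s)$. Then $\tilde b_X^2-\tilde b_Y^2-(\tilde b_X'-\tilde b_Y')=(\gamma s)^2-(y_2 s)^2+2y_1(1-s)s(\gamma-y_2)-(\gamma-y_2)/(x_2-x_1)$. For fixed $s\in(0,1]$ this is a polynomial in $\gamma$ with positive leading coefficient $s^2>0$, so it is positive for $\gamma$ large; the only delicate point is uniformity as $s\to 0$. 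Near $s=0$ the $\gamma^2 s^2$ term degenerates, but so does the $\gamma s$ term: for $s\leq 1/\gamma$ one has $\gamma s\leq 1$ and $|y_1|(1-s)s(\gamma-y_2)\leq |y_1|\cdot(1/\gamma)\cdot\gamma=|y_1|$ bounded, so the whole left side is $\geq -(\text{const}) - \gamma/(x_2-x_1)$, which is NOT nonnegative. This shows the naive pointwise argument genuinely fails at $s=0$, where the left side equals $-(\gamma-y_2)/(x_2-x_1)<0$. I expect therefore that the intended reading is that the inequality is asserted only after additionally shrinking the interval, or that $x_1$ is chosen with $\tilde b_X(x_1),\tilde b_Y(x_1)$ of the same large sign; the \emph{main obstacle}, and the step I would flag for the authors, is precisely the behavior at the left endpoint $x_1$ where $\tilde b_X-\tilde b_Y$ vanishes. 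Modulo that endpoint issue — which is resolved in the paper's setup by the freedom in choosing $x_1,x_2,y_1,y_2$ so that $2y_1$ has the same sign as $\gamma+y_2$, making $\tilde b_X+\tilde b_Y>0$ on all of $[x_1,x_2]$ — the argument is then immediate: both $\tilde b_X-\tilde b_Y\geq 0$ and $\tilde b_X+\tilde b_Y>0$ on $[x_1,x_2]$, so $\tilde b_X^2-\tilde b_Y^2\geq 0$, and since $\tilde b_X-\tilde b_Y$ attains the value $\gamma-y_2$ while $\tilde b_X+\tilde b_Y$ is bounded below by a positive constant on the right half of the interval, the product exceeds the fixed constant $(\gamma-y_2)/(x_2-x_1)$ once $\gamma$ is large enough.
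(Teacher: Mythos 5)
You have correctly identified a real error: Lemma \ref{lem6.3} as stated is false. At $x=x_1$ one has $\tilde b_X(x_1)=\tilde b_Y(x_1)=y_1$, so $\tilde b_X^2-\tilde b_Y^2$ vanishes there, while $\tilde b_X'-\tilde b_Y'\equiv(\gamma-y_2)/(x_2-x_1)>0$ is a positive constant; hence the stated quantity equals $-(\gamma-y_2)/(x_2-x_1)<0$ at $x_1$, and increasing $\gamma$ only makes this worse. Your left-endpoint diagnosis is exactly right.

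Two further comments. First, your speculated repair --- arranging $\tilde b_X+\tilde b_Y>0$ throughout $[x_1,x_2]$ --- is not how this gets resolved: the paper fixes $y_1<y_2<0$, so $(\tilde b_X+\tilde b_Y)(x_1)=2y_1<0$, and even if the sum were positive the product $(\tilde b_X-\tilde b_Y)(\tilde b_X+\tilde b_Y)$ would still vanish at $x_1$ because the first factor is zero there, so subtracting the fixed positive constant $\tilde b_X'-\tilde b_Y'$ would still give a negative value near $x_1$. Second, the paper's own displayed chain of inequalities actually estimates $(\tilde b_X^2-\tilde b_Y^2)(x)+(\tilde b_X'-\tilde b_Y')(x)$ --- with a plus sign --- from below by
\[
(\gamma-y_2)\left[\frac{1}{x_2-x_1}-\frac{y_1^2(\gamma-y_2)}{(\gamma-y_1)^2-(y_2-y_1)^2}\right],
\]
which is indeed nonnegative for large $\gamma$ because the global minimum of the parabola $\tilde b_X^2-\tilde b_Y^2$ stays bounded (it tends to $-y_1^2$) while $\tilde b_X'-\tilde b_Y'$ grows linearly in $\gamma$. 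So the proof and the statement disagree by a sign, and the proof is correct for the ``$+$'' version. Tracking the conventions in Lemma \ref{lem5.1} (which characterizes the generalized second derivative $\mu$ by $\int H'\,g'=\int H\,d\mu$ rather than the standard $\int H\,d\mu=-\int H'\,g'$) and then through the It\^o--Tanaka step in Lemma \ref{lem5.2} shows that the sign errors propagate consistently: the quantity that must be nonnegative for Lemma \ref{lem6.4} to go through is $(b_X^2-b_Y^2)+(b_X'-b_Y')I_{(M_1,M_2)}$, matching what the proof of Lemma \ref{lem6.3} actually establishes rather than what its statement asserts.
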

\begin{proof}
We have
\[
\tilde b_X^2-\tilde b_Y^2=\left(\frac{x-x_1}{x_2-x_1}\right)^2\left[(\gamma-y_1)^2-(y_2-y_1)^2\right]+\frac{2y_1(\gamma-y_2)}{x_2-x_1}(x-x_1)
\]
and $\tilde b_X^2-\tilde b_Y^2$ takes on its minimum value $-\frac{y_1^2(\gamma-y_2)^2}{(\gamma-y_1)^2-(y_2-y_1)^2}$ at the point $x=x_1-\frac{y_1(\gamma-y_2)(x_2-x_1)}{(\gamma-y_1)^2-(y_2-y_1)^2}$ (note that $\gamma-y_1>y_2-y_1>0$). In particular, on $[x_1,x_2]$,
\begin{align*}
(\tilde b_X^2-\tilde b_Y^2)(x)&+(\tilde b_X'-\tilde b_Y')(x)\geq -\frac{y_1^2(\gamma-y_2)^2}{(\gamma-y_1)^2-(y_2-y_1)^2}+\frac{\gamma-y_1}{x_2-x_1}-\frac{y_2-y_1}{x_2-x_1}\\
&=-\frac{y_1^2(\gamma-y_2)^2}{(\gamma-y_1)^2-(y_2-y_1)^2}+\frac{\gamma-y_2}{x_2-x_1}\\
&=(\gamma-y_2)\left[-\frac{y_1^2(\gamma-y_2)}{(\gamma-y_1)^2-(y_2-y_1)^2}+\frac{1}{x_2-x_1}\right]\\
&\geq0
\end{align*}
for large $\gamma$.
\end{proof}

\begin{lem}\label{lem6.4}
Let $dZ_t=dB_t+b_Z(Z_t)\,dt$, where $b_Z$ satisfies \eqref{eq1.2} with $b_Z=C>0$ on $(M_1,M_2)$. Then
\[
\limsup_{t\to\infty}\log P_x(\tau_0(Z)>t)\leq-\,\gamma(p,\beta),
\]
where $\gamma(p,\beta)$ is from \eqref{eq1.4}.
\end{lem}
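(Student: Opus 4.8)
Since $b_Z>0$ on $(M_1,M_2)$, neither Theorem \ref{thm4.1} nor the monotone comparison used in Theorem \ref{thm3.1} applies to $Z$ directly; the plan is a three-step argument --- lift, transform, smooth. Recall that in the present case $\alpha>0$, so $b_Z(x)=-\alpha x^{-q}$ near $0$. Fix $0<x_1<M_1<M_2<x_2$ with $\alpha x_1^{-q}>\beta x_2^{-p}$, put $y_1=-\alpha x_1^{-q}$ and $y_2=-\beta x_2^{-p}$, so that $y_1<y_2<0$; let $\gamma>0$ be a large parameter, and let $\tilde b_X$, $\tilde b_Y$ be the lines \eqref{eq6.1}, \eqref{eq6.2} through $(x_1,y_1)$. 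Define drifts $b_W$ and $b_Y$ that equal $-\alpha x^{-q}$ on $(0,x_1]$, equal $\tilde b_X$ respectively $\tilde b_Y$ on $(x_1,x_2)$, and equal $-\beta x^{-p}$ on $[x_2,\infty)$; then $b_Y$ is continuous on $(0,\infty)$, $b_W$ is continuous off $x_2$, and both obey \eqref{eq1.2} with the same $\alpha$, $\beta$ and with $(x_1,x_2)$ in place of $(M_1,M_2)$.

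\emph{Lift and transform.} For $\gamma$ large one has $b_Z\le b_W$ on all of $(0,\infty)$: the two coincide on $(0,x_1]\cup[x_2,\infty)$; on $(x_1,M_1)$ the line $\tilde b_X$ leaves $(x_1,-\alpha x_1^{-q})=(x_1,b_Z(x_1))$ with slope of order $\gamma$, which exceeds the slope of $-\alpha x^{-q}$ throughout $[x_1,M_1]$, so $\tilde b_X\ge-\alpha x^{-q}$ there; on $[M_1,M_2]$ it suffices that $\tilde b_X(M_1)\ge C$; and on $[M_2,x_2)$ the line is positive while $b_Z=-\beta x^{-p}<0$. Hence, by the comparison theorem for one-dimensional SDEs, $P_x(\tau_0(Z)>t)\le P_x(\tau_0(W)>t)$. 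Now apply Lemma \ref{lem5.2} to $W$ and $Y$ with middle interval $(x_1,x_2)$: since $b_W=b_Y$ off $(x_1,x_2)$, the functions $g(y)=\int_0^y(b_W-b_Y)$ and $H=(b_W^2-b_Y^2)-(b_W'-b_Y')I_{(x_1,x_2)}$ vanish there, and the identity reads
\[
P_x(\tau_0(W)>t)=E_x\!\left[\exp\!\left(g(Y_t)-g(x)-\tfrac12\int_0^t H(Y_s)\,ds-\tfrac12(\gamma-y_2)\,\ell_t^{x_2}(Y)\right)I(\tau_0(Y)>t)\right].
\]
Here $g$ is bounded --- it is $0$ on $(0,x_1]$, nondecreasing on $[x_1,x_2]$ since $\tilde b_X\ge\tilde b_Y$, and constant on $[x_2,\infty)$ --- so $g(Y_t)-g(x)\le C_g$; the local-time term is $\le 0$ because $\gamma-y_2>0$; and on $(x_1,x_2)$, $H=\tilde b_X^2-\tilde b_Y^2-(\tilde b_X'-\tilde b_Y')\ge 0$ by Lemma \ref{lem6.3} once $\gamma$ is large. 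Choosing $\gamma$ large enough for all of the above, $P_x(\tau_0(W)>t)\le e^{C_g}\,P_x(\tau_0(Y)>t)$.

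\emph{Smooth.} The drift $b_Y$ is continuous, increasing, equals $-\alpha x^{-q}$ near $0$ and $-\beta x^{-p}$ near $\infty$, and has corners only at $x_1$ and $x_2$; a routine smoothing produces a $C^3$, increasing drift $\widehat b_Y\ge b_Y$ that coincides with $b_Y$ off small neighborhoods of $x_1$ and $x_2$, and hence is of the form \eqref{eq1.2} with the same $\beta$, $p$ and with near-origin coefficient $\alpha>0$. By the comparison theorem $P_x(\tau_0(Y)>t)\le P_x(\tau_0(\widehat Y)>t)$, and Theorem \ref{thm4.1} applies to $\widehat Y$, giving $\limsup_{t\to\infty}t^{-(1-p)/(1+p)}\log P_x(\tau_0(\widehat Y)>t)\le-\gamma(p,\beta)$. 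Chaining the three inequalities and using $C_g\,t^{-(1-p)/(1+p)}\to 0$ yields the lemma.

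\textbf{Main obstacle.} The one genuinely delicate term is $-\tfrac12\int_0^tH(Y_s)\,ds$: as an additive functional it grows \emph{linearly} in $t$ on the set where $Y$ spends positive-density time in $\{H<0\}$, and such linear growth would swamp the $\exp(-\gamma(p,\beta)\,t^{(1-p)/(1+p)})$ decay we are after. This is exactly why $Z$ cannot be transformed directly, but must first be lifted to a process $W$ whose middle drift is a steep line, with the slope $\gamma$ pushed up until $\tilde b_X^2-\tilde b_Y^2-(\tilde b_X'-\tilde b_Y')\ge 0$ on all of $[x_1,x_2]$ --- the content of Lemma \ref{lem6.3}. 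The remaining ingredients (boundedness of $g$, the sign of the local-time term, and the final smoothing) are routine.
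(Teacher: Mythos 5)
Your argument reproduces the paper's proof essentially step for step: fix two points $(x_1,y_1)$ with $x_1<M_1$ and $(x_2,y_2)$ on the terminal power-law curve with $y_1<y_2<0$, raise the slope $\gamma$ of the upper line until Lemma \ref{lem6.3} gives the sign condition on $H$, compare $Z$ upward to the piecewise process, apply Lemma \ref{lem5.2} and discard the nonpositive $H$-integral and local-time contributions, then bound $g$ and invoke Theorem \ref{thm4.1} on the negative, monotone drift $b_Y$. The only departures are cosmetic — you place $x_2$ strictly beyond $M_2$ while the paper takes $x_2=M_2$, and you make explicit the $C^3$ smoothing of $b_Y$ at its two corners before applying Theorem \ref{thm4.1}, a regularity step the paper leaves implicit when it asserts that Theorem \ref{thm4.1} applies to $Y$.
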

\begin{proof}
By our hypotheses,
\begin{equation}\label{eq6.3}
b_Z(x)=\left\{\begin{array}{ll}
-\alpha x^{-q},&\quad x\in(0,M_1]\\
C,&\quad x\in(M_1,M_2)\\
-\beta x^{-p},&\quad x\in[M_2,\infty).
\end{array}
\right.
\end{equation}
In Lemma \ref{lem5.2}, choose $(x_2,y_2)=(M_2,-\beta M_2^{-p})$ and $(x_1,y_1)=\left(\tfrac{M_1}{K},-\alpha\left(\tfrac{M_1}{K}\right)^{-q}\right)$ where $K>1$ is so large that $y_1<y_2$. Notice this tells us $y_1<-\alpha M_1^{-q}$. Then we can choose $\gamma>0$ so large that the conclusion of Lemma \ref{lem5.2} holds for $\tilde b_X$ and $\tilde b_Y$ given by \eqref{eq6.1} and \eqref{eq6.2}:                          
\begin{equation}\label{eq6.4}
\tilde b_X^2-\tilde b_Y^2-(\tilde b_X'-\tilde b_Y')\geq 0\text{ on $[x_1,x_2]$.}
\end{equation}
Define
\begin{equation}\label{eq6.5}
b_X(x)=\left\{\begin{array}{ll}
-\alpha x^{-q},&\quad x\in(0,x_1]\\
\tilde b_X(x),&\quad x\in(x_1,x_2)=(x_1,M_2)\\
-\beta x^{-p},&\quad x\in[x_2,\infty)=[M_2,\infty)
\end{array}
\right.
\end{equation}
and
\begin{equation}\label{eq6.6}
b_Y(x)=\left\{\begin{array}{ll}
-\alpha x^{-q},&\quad x\in(0,x_1]\\
\tilde b_Y(x),&\quad x\in(x_1,x_2)=(x_1,M_2)\\
-\beta x^{-p},&\quad x\in[x_2,\infty)=[M_2,\infty).
\end{array}
\right.
\end{equation}
Then by definition of $b_X$, $b_Y$ and \eqref{eq6.4},
\begin{equation}\label{eq6.7}
b_X^2-b_Y^2-(b_X'-b_Y')I_{(x_1,x_2)}\geq 0\text{ on $(0,\infty)$.}
\end{equation}
Notice this continues to hold if $\gamma$ is made larger. Since $x_1=\frac{M_1}{K}<M_1$, we can make $\gamma$ larger, if necessary, so that the line segment $\{(x,C):M_1<z<M_2\}$ (recall $C$ is from \eqref{eq6.3}) lies to the right of the line through the points $(x_1,y_1)$ and $(x_2,\gamma)$. Since $x_1<M_1$, we have $b_Z\leq b_X$ on $(0,\infty)$. By the Comparison Theorem,
\begin{equation}\label{eq6.8}
P_x(\tau_0(Z)>t)\leq P_x(\tau_0(X)>t).
\end{equation}
We now apply Lemma \ref{lem5.2} to $b_X$, $b_Y$ given by \eqref{eq6.5}, \eqref{eq6.6}, respectively, but with $M_1,M_2$ in the lemma taken to be $x_1,x_2$. First check the hypotheses of the Lemma:
\begin{itemize}
\item $b_X$ and $b_Y$ are continuous on $(0,\infty)\backslash\{x_2\}$ since
\[
\lim_{x\to x_1^-}b_X(x)=y_1=\lim_{x\to x_1^+}b_X(x)\text{ and }\lim_{x\to x_1^-}b_Y(x)=y_1=\lim_{x\to x_1^+}b_Y(x).
\]
\item Since each is linear on $(x_1,x_2)$, each restricted to $(x_1,x_2)$ has a $C^1$ extension to $[x_1,x_2]$.
\end{itemize}
Thus Lemma \ref{lem5.2} applies and so for
\[
g(z)=\int_0^x(b_X-b_Y)(y)\,dy
\]
and
\[
H=(b_X^2-b_Y^2)-(b_X'-b_Y')I_{(x_1,x_2)},
\]
we have 
\begin{align}\label{eq6.9}
P_x(\tau_0(X)>t)=E_x\left[\exp\left(g(Y_t)-g(x)-\tfrac12\int_0^tH(Y_s)\,ds\right.\right.\notag\\
&\hspace{-2in}\left.\left.-\tfrac12\left(b_X-b_Y\right)(M_2-)\,\ell_t^{M_2}(Y)\vphantom{\int_0^t}\right)I(\tau_0(Y)>t)\right].
\end{align}
Now by \eqref{eq6.7}, $H\geq0$, and by definition of $b_X$ and $b_Y$,
\begin{align*}
(b_X-b_Y)(M_2-)&=\lim_{x\to x_2^-}\left(\tilde b_X(x)-\tilde b_Y(x)\right)\text{ (by \eqref{eq6.5}--\eqref{eq6.6})}\\
&=\tilde b_X(x_2)-\tilde b_Y(x_2)\text{ ($\tilde b_X$ and $\tilde b_Y$ are continuous)}\\
&=\gamma-y_2\text{ (by \eqref{eq6.1}--\eqref{eq6.2})}\\
&>0.
\end{align*}
Then \eqref{eq6.9} becomes
\begin{equation}\label{eq6.10}
P_x(\tau_0(X)>t)\leq E_x\left[\exp\left(g(Y_t)-g(x)\right)I(\tau_0(Y)>t)\right].
\end{equation}
Since $b_X=b_Y$ on $(x_1,x_2)^c$ and since $b_X-b_Y=\tilde b_X-\tilde b_Y$ is bounded on $(x_1,x_2)$, we see
\[
g(z)=\int_0^z(b_X-b_Y)(y)\,dy\leq \sup_{(x_1,x_2)}(\tilde b_X-\tilde b_Y)(x_2-x_1),\quad z>0.
\]
Hence for some positive constant $C_1$ independent of $t$, \eqref{eq6.10} becomes
\begin{equation}\label{eq6.11}
P_x(\tau_0(X)>t)\leq C_1P_x(\tau_0(Y)>t).
\end{equation}
The crucial point is that
\[
\sup_{(0,\infty)}b_Y<0.
\]
This holds because:
\begin{itemize}
\item on $(0,x_1]$, $b_Y(x)=-\alpha x^{-q}\leq-\alpha x_1^{-p}$;
\item on $(x_1,x_2)$, by \eqref{eq6.2} and that $y_1<y_2<0$, $b_Y=\tilde b_Y\leq y_2<0$;
\item on $(x_2,\infty)$, $b_Y(x)=-\beta x^{-p}\leq -\beta x_2^{-p}$.
\end{itemize}
Thus Theorem \ref{thm4.1} applies to the process $Y$, and we conclude
\[
\limsup_{t\to\infty}\log P_x(\tau_0(Y)>t)\leq-\,\gamma(p,\beta),
\]
where $\gamma(p,\beta)$ is from \eqref{eq1.4}. Combined with \eqref{eq6.8} and \eqref{eq6.11}, this gives us
\[
\limsup_{t\to\infty}\log P_x(\tau_0(Z)>t)\leq-\,\gamma(p,\beta),
\]
as desired.
\end{proof}

\n{\bf Proof of Lemma \ref{lem6.2}.} Let $X_t$ be from \eqref{eq1.1}, where $b$ satisfies \eqref{eq1.2} and $\alpha\geq 0$. Let $C=\sup_{[M_1,M_2]}|b|$. Suppose $dZ_t=dB_t+b_Z(Z_t)\,dt$, where $b_Z$ satisfies \eqref{eq1.2} with $b_Z=C$ on $(M_1,M_2)$. Then $b_X\leq b_Z$ and so by the Comparison Theorem, $P_x(\tau_0(X)>t)\leq P_x(\tau_0(Z)>t)$. The desired upper bound follows upon applying Lemma \ref{lem6.4} to $Z$.\hfill$\square$

\bigskip\n\emph{Second case: $\alpha<0$.}
\begin{lem}\label{lem6.5} Under the condition \eqref{eq1.2}, if $\alpha<0$, then the solution $X_t$ of \eqref{eq1.1} satisfies
\[
\limsup_{t\to\infty}\log P_x(\tau_0(X)>t)\leq-\,\gamma(p,\beta),
\]
where $\gamma(p,\beta)$ is from \eqref{eq1.4}. 
\end{lem}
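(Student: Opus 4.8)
The strategy is to reduce the case $\alpha<0$ to the case $\alpha>0$, which is already settled in Lemma \ref{lem6.2}. The Comparison Theorem is of no use here — enlarging the drift near the origin only delays hitting $0$ — so the reduction must be carried out through the Girsanov-type identity of Lemma \ref{lem5.4}, which, unlike comparison, is allowed to change the sign of $\alpha$.

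First I would pass, by the Comparison Theorem, to a continuous drift: as in the proof of Lemma \ref{lem2.1}, dominate $b$ by a continuous $\hat b$ satisfying \eqref{eq1.2} with the same $\alpha<0$, $\beta>0$, $p$, $q$ and possibly enlarged constants $\widetilde{M_1}<M_1$, $\widetilde{M_2}\ge M_2$, so that $P_x(\tau_0(X)>t)\le P_x(\tau_0(\hat X)>t)$; note that $\hat b=|\alpha|x^{-q}$ on $(0,\widetilde{M_1}]$. Next I would build a target process $Y$ whose drift $b_Y$ \emph{agrees with $\hat b$ on $[\widetilde{M_1},\infty)$} but carries a positive coefficient in its near-origin power law: fix a small $\epsilon\in(0,\widetilde{M_1})$ and then a small $\alpha_Y\in(0,|\alpha|]$ with $\alpha_Y(\widetilde{M_1}-\epsilon)^{-q}\le|\alpha|\,\widetilde{M_1}^{-q}$, set $b_Y(x)=-\alpha_Y x^{-q}$ on $(0,\widetilde{M_1}-\epsilon]$, $b_Y=\hat b$ on $[\widetilde{M_1},\infty)$, and on $(\widetilde{M_1}-\epsilon,\widetilde{M_1})$ let $b_Y$ be a $C^1$ nondecreasing interpolation between the matching endpoint values $-\alpha_Y(\widetilde{M_1}-\epsilon)^{-q}$ and $|\alpha|\,\widetilde{M_1}^{-q}$, chosen so that $|b_Y|\le|\hat b|$ throughout $(0,\widetilde{M_1}]$ (possible, since on $(0,\widetilde{M_1}-\epsilon]$ we have $|b_Y|=\alpha_Y x^{-q}\le|\alpha|x^{-q}=|\hat b|$, and on the transition interval $b_Y$ stays in $[-\alpha_Y(\widetilde{M_1}-\epsilon)^{-q},\,|\alpha|\,\widetilde{M_1}^{-q}]$, an interval contained in $[-|\hat b(x)|,\,|\hat b(x)|]$ by the choice of $\alpha_Y$). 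Then $b_Y$ is continuous on $(0,\infty)$ and satisfies \eqref{eq1.2} with $\alpha_Y>0$, the same $\beta$, and constants $\widetilde{M_1}-\epsilon$, $\widetilde{M_2}$; $\hat b$ satisfies \eqref{eq1.2} with these same constants; and $\hat b-b_Y$ equals $(|\alpha|+\alpha_Y)x^{-q}$ on $(0,\widetilde{M_1}-\epsilon]$, is $C^1$ on $(\widetilde{M_1}-\epsilon,\widetilde{M_1})$, and vanishes on $[\widetilde{M_1},\infty)$, so the hypotheses of Lemma \ref{lem5.4} are met (with $M=\widetilde{M_1}$).

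Applying Lemma \ref{lem5.4} with $b_1=\hat b$, $b_2=b_Y$ yields
\[
P_x(\tau_0(\hat X)>t)=E_x\left[\exp\Big(g(Y_t)-g(x)-\tfrac12\int_0^tH(Y_s)\,ds\Big)I(\tau_0(Y)>t)\right],
\]
where $g(y)=\int_0^y(\hat b-b_Y)\,dz$ and $H=(\hat b^2-b_Y^2)-(\hat b'-b_Y')I_{(\widetilde{M_1}-\epsilon,\widetilde{M_2})\setminus\{\widetilde{M_1}\}}$. I would then verify that $H\ge0$ on $(0,\infty)$ and that $g$ is bounded. Outside $(\widetilde{M_1}-\epsilon,\widetilde{M_2})$ one has $H=\hat b^2-b_Y^2$, which is $(\alpha^2-\alpha_Y^2)x^{-2q}\ge0$ on $(0,\widetilde{M_1}-\epsilon]$ and $0$ on $[\widetilde{M_2},\infty)$; on $(\widetilde{M_1},\widetilde{M_2})$ one has $H=0$ because $b_Y=\hat b$ there; and on the transition interval $(\widetilde{M_1}-\epsilon,\widetilde{M_1})$ — the only delicate spot — the repulsive drift $\hat b=|\alpha|x^{-q}$ is \emph{decreasing}, so $-\hat b'\ge0$, which together with $b_Y'\ge0$ and $\hat b^2-b_Y^2\ge0$ (from $|b_Y|\le|\hat b|$) gives $H=(\hat b^2-b_Y^2)+(-\hat b')+b_Y'\ge0$. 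Since $\hat b-b_Y\ge0$ is supported in $(0,\widetilde{M_1})$, integrable near $0$ (as $q<1$) and bounded, $g$ is bounded; hence the exponential above is at most $e^{\,g(\widetilde{M_1})-g(x)}=:C_1<\infty$, and
\[
P_x(\tau_0(X)>t)\le P_x(\tau_0(\hat X)>t)\le C_1\,P_x(\tau_0(Y)>t).
\]
Because $b_Y$ obeys \eqref{eq1.2} with $\alpha_Y>0$, Lemma \ref{lem6.2} applies to $Y$ and gives $\limsup_{t\to\infty}\log P_x(\tau_0(Y)>t)\le-\gamma(p,\beta)$; the constant $C_1$ is harmless under the $\limsup$ of the logarithm, and the claim follows.

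The main obstacle is the construction of $b_Y$ together with the sign check $H\ge0$, which is concentrated on the short transition interval where $b_Y$ moves from the modified power $-\alpha_Y x^{-q}$ to $\hat b$. It works precisely because in the regime $\alpha<0$ the near-origin drift is repulsive and decaying, so $-\hat b'$ enters $H$ with a favorable sign, and taking $\alpha_Y$ small keeps $|b_Y|\le|\hat b|$, so that $\hat b^2-b_Y^2\ge0$ as well. A secondary point is to check that the regularity hypotheses of Lemma \ref{lem5.4} — continuity of both drifts and the $C^1$-extension condition on $\hat b-b_Y$ near $\widetilde{M_1}-\epsilon$ and $\widetilde{M_1}$ — survive both modifications; this is routine once the first reduction produces a continuous $\hat b$.
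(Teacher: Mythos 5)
Your proof is correct, and it takes a genuinely different route from the paper's. The paper first uses the Comparison Theorem to pass to a $C^3$ drift $b_Y$ that is positive on $(0,M_3)$, decreasing to $0$ at $M_3\in(M_2,M_4)$, in order to land in the hypotheses of Lemma~\ref{lem6.6}; Lemma~\ref{lem6.6} then invokes Lemma~\ref{lem5.4} with the \emph{reflected} drift $b_Y=-b$ on $(0,x_1]$ and $b_Y=b$ on $(x_1,\infty)$, exploiting that $b^2=b_Y^2$ identically and $b'\le0$ on $(M_1,x_1)$, and finishes with Lemma~\ref{lem6.2}. You instead bypass Lemma~\ref{lem6.6} entirely: after smoothing to a continuous $\hat b$, you feed Lemma~\ref{lem5.4} directly with a comparison pair $(\hat b,b_Y)$ where $b_Y$ is \emph{not} a reflection but a ``small positive $\alpha_Y$'' drift matching $\hat b$ from $\widetilde{M_1}$ onward through a short $C^1$ nondecreasing bridge; the sign of $H$ is then secured by the combination $\hat b^2\ge b_Y^2$ (from $|b_Y|\le|\hat b|$, guaranteed by taking $\alpha_Y$ small) and the derivative signs $-\hat b'\ge0$, $b_Y'\ge0$ on the bridge, rather than by exact cancellation $b^2=b_Y^2$. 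Both routes reduce to Lemma~\ref{lem6.2}, and both depend on the same structural fact --- that when $\alpha<0$ the near-origin piece of the drift is decreasing, so $-\hat b'$ contributes favorably to $H$. The paper's decomposition isolates a cleanly-stated intermediate lemma (\ref{lem6.6}), whose reflection trick makes $b^2-b_Y^2\equiv0$ so that the only work is a one-sided derivative bound; your version fuses the two steps, at the price of a slightly more involved piecewise sign-check (including the bridge interval) but with the payoff of needing only continuity of $\hat b$, not the $C^3$ regularity the paper imposes to state Lemma~\ref{lem6.6}.
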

\n For the proof, we need the following special case.
\begin{lem}\label{lem6.6} Let $\alpha<0$. Assume $b\in C^3$ satisfies condition \eqref{eq1.2}. Suppose that for some $x_1\in (0,M_2)$, $b(x_1)=0$ and $b$ is strictly decreasing on $(0,x_1)$. Then the solution $X_t$ of \eqref{eq1.1} satisfies
\[
\limsup_{t\to\infty}\log P_x(\tau_0(X)>t)\leq-\,\gamma(p,\beta),
\]
where $\gamma(p,\beta)$ is from \eqref{eq1.4}. 
\end{lem}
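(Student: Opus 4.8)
The plan is to reduce the estimate to the case $\alpha>0$, already established in Lemma~\ref{lem6.2}, by transferring the tail bound from $X$ to a comparison diffusion $Y$ through the Girsanov-type identity of Lemma~\ref{lem5.4}. The key is to pick the drift $b_Y$ of $Y$ so that the exponential weight in \eqref{eq5.10} is bounded above, and the hypotheses ``$b$ strictly decreasing on $(0,x_1)$'' and ``$b(x_1)=0$'' are precisely what make such a choice possible.

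First I would note that $b(x)=-\alpha x^{-q}=|\alpha|x^{-q}$ does not vanish on $(0,M_1]$, so $b(x_1)=0$ forces $x_1\in(M_1,M_2)$. I then set
\[
b_Y(x)=\begin{cases}-b(x),&0<x<x_1,\\ b(x),&x\ge x_1.\end{cases}
\]
Since $b(x_1)=0$, $b_Y$ is continuous on $(0,\infty)$; on $(0,M_1]$ it equals $\alpha x^{-q}=-(-\alpha)x^{-q}$ and on $[M_2,\infty)$ it equals $-\beta x^{-p}$, so $b_Y$ satisfies \eqref{eq1.2} with the same $\beta,p,q,M_1,M_2$ but with near-origin coefficient $\alpha_Y:=-\alpha>0$. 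Taking the exceptional point of Lemma~\ref{lem5.4} to be $M:=x_1$, the difference $b-b_Y$ coincides with $2b$ on $(M_1,x_1)$ (which extends to a $C^1$ function on $[M_1,x_1]$, as $b\in C^3$) and vanishes on $(x_1,M_2)$, so Lemma~\ref{lem5.4} applies with $b_X:=b$ and $b_Y$ as above.

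Next I would check that the objects $g$ and $H$ furnished by Lemma~\ref{lem5.4} are benign. Because $b_X-b_Y=2b$ on $(0,x_1)$, where $b>0$, and $b_X-b_Y\equiv0$ on $[x_1,\infty)$, the function $g(y)=\int_0^y(b_X-b_Y)$ is finite (as $q<1$), nonnegative, nondecreasing on $(0,x_1)$, and constant afterward, so $0\le g\le g(x_1)<\infty$ on $(0,\infty)$. For $H=(b_X^2-b_Y^2)-(b_X'-b_Y')I_{(M_1,M_2)\setminus\{x_1\}}$ one has $b_X^2\equiv b_Y^2$, so $H=0$ off $(M_1,x_1)$, while on $(M_1,x_1)$ one has $b_X'-b_Y'=2b'$, hence $H=-2b'\ge0$ since $b$ is decreasing there; thus $H\ge0$ everywhere. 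Plugging into \eqref{eq5.10},
\[
P_x(\tau_0(X)>t)=E_x\!\left[\exp\!\left(g(Y_t)-g(x)-\tfrac12\int_0^tH(Y_s)\,ds\right)I(\tau_0(Y)>t)\right]\le e^{g(x_1)}\,P_x(\tau_0(Y)>t).
\]
Since $b_Y$ meets \eqref{eq1.2} with $\alpha_Y>0$, Lemma~\ref{lem6.2} applies to $Y$, and the constant $e^{g(x_1)}$ disappears upon taking $\log$ (and, in the scaled statement, dividing by $t^{(1-p)/(1+p)}$) and letting $t\to\infty$; this yields the asserted bound for $X$.

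I expect the only real obstacle to be guessing the comparison drift. The reflection $b_Y=-b$ on $(0,x_1)$ does three things at once: it flips the repulsive coefficient $\alpha<0$ into the attractive $\alpha_Y=-\alpha>0$ (so Lemma~\ref{lem6.2} becomes available for $Y$), it cancels the $b_X^2-b_Y^2$ term of $H$, and it leaves only $-2b'$, which is $\ge0$ exactly because of strict monotonicity; the condition $b(x_1)=0$ is what glues the two branches of $b_Y$ together continuously. Everything else is routine: verifying the regularity hypotheses of Lemma~\ref{lem5.4}, the near-origin integrability of $b$, and the well-posedness of $Y$ together with $\tau_0(Y)<\infty$ a.s. (the last from Lemma~\ref{lem2.1} applied to $b_Y$).
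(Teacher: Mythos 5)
Your proposal is correct and follows essentially the same route as the paper: reflect $b$ across zero on $(0,x_1)$ to obtain $b_Y$ with positive near-origin coefficient, invoke Lemma~\ref{lem5.4} with $M=x_1$ (the continuity being supplied by $b(x_1)=0$), observe $b_X^2=b_Y^2$ and $H=-2b'\ge0$ from the monotonicity, bound $g$, and hand off to Lemma~\ref{lem6.2}. The only addition you make beyond the paper's text is the explicit check that $x_1>M_1$ (so that $M=x_1$ is admissible in Lemma~\ref{lem5.4}), which the paper leaves implicit.
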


\begin{proof}
Define
\begin{equation}\label{eq6.12}
b_Y(x)=\left\{\begin{array}{ll}
-b(x),&\quad x\in(0,x_1]\\
b(x),&\quad x\in(x_1,\infty),
\end{array}
\right.
\end{equation}
and let $Y_t$ solve
\[
dY_t=dB_t+b_Y(Y_t)\,dt,\quad Y_0=x.
\]
Now $b$ and $b_Y$ are continuous on $(0,\infty)$ and $b-b_Y$ restricted to $(M_1,x_1)$ and $(x_1,M_2)$ has $C^1$ extensions to $[M_1,x_1]$ and $[x_1,M_2]$, respectively. Then by Lemma \ref{lem5.4}, taking $M$ there to be $x_1$, for
\[
g(z)=\int_0^z\left(b-b_Y\right)(y)\,dy
\]
and
\[
H=\left(b^2-b_Y^2\right)-\left(b'-b_Y'\right)I_{(M_1,M_2)\backslash\{x_1\}},
\]
we have 
\begin{equation}\label{eq6.13}
P_x(\tau_0(X)>t)=E_x\left[\exp\left(g(Y_t)-g(x)-\tfrac12\int_0^tH(Y_s)\,ds\right)I(\tau_0(Y)>t)\right].
\end{equation}
On the other hand, $b^2=b_Y^2$ on $(0,\infty)$ and
\begin{align*}
(b'-b_Y')(x)&=\left\{\begin{array}{ll}
2b'(x),&\quad x\in(M_1,x_1)\\
0,&\quad x\in(x_1,\infty)
\end{array}
\right.\\
&\leq 0.
\end{align*}
In particular, $H\geq 0$ and \eqref{eq6.13} becomes
\begin{equation}\label{eq6.14}
P_x(\tau_0(X)>t)\leq E_x\left[\exp\left(g(Y_t)-g(x)\right)I(\tau_0(Y)>t)\right].
\end{equation}
Since $b=b_Y$ on $(x_1,\infty)$ and $b-b_Y=2b$ is nonnegative and integrable on $(0,x_1)$, we see $g$ is bounded. Hence for some positive constant $C_1$ independent of $t$, \eqref{eq6.14} becomes
\begin{equation}\label{eq6.15}
P_x(\tau_0(X)>t)\leq C_1P_x(\tau_0(Y)>t).
\end{equation}
By Lemma \ref{lem6.2} applied to $Y$,
\[
\limsup_{t\to\infty}\log P_x(\tau_0(X)>t)\leq\limsup_{t\to\infty}\log P_x(\tau_0(Y)>t)\leq-\,\gamma(p,\beta),
\]
where $\gamma(p,\beta)$ is from \eqref{eq1.4}. 
\end{proof}

\bigskip
\n{\bf Proof of Lemma \ref{lem6.5}.} Let $X_t$ be from \eqref{eq1.1}, where $b$ satisfies \eqref{eq1.2} and $\alpha<0$. Let $C>\sup_{[M_1,M_2]}|b|$ and choose $\alpha_1<\alpha$ such that $-\alpha_1>CM_2^q$. Then choose $b_Y\in C^3$ having the following properties: for some $M_3,M_4\in(M_2,\infty)$ with $M_3<M_4$,
\begin{itemize}
\item $b_Y(x)=-\alpha_1x^{-q}$ on $(0,M_2]$;
\item $b_Y$ is strictly decreasing on $[M_2,M_3]$;
\item $b_Y(M_3)=0$;
\item $b_Y=b$ on $[M_4,\infty)$.
\end{itemize}

\n Suppose $dY_t=dB_t+b_Y(Y_t)\,dt$. Then $b\leq b_Y$ and so by the Comparison Theorem, $P_x(\tau_0(X)>t)\leq P_x(\tau_0(Y)>t)$. The process $Y$ satisfies the conditions of Lemma \ref{lem6.6} and the desired upper bound follows by applying that lemma to $Y$.\hfill$\square$

\bigskip\n\emph{Third case: $\alpha=0$.}

\n Define
\[
b_Y(x)=\left\{\begin{array}{ll}
x^{-q},&\quad x\in(0,M_1]\\
b(x),&\quad x\in(M_1,\infty),
\end{array}
\right.
\]
and let $Y_t$ solve
\[
dY_t=dB_t+b_Y(Y_t)\,dt,\quad Y_0=x.
\]
Then by the Comparison Theorem,
\[
P_x(\tau_0(X)>t)\leq P_x(\tau_0(Y)>t).
\]
Since the situation of $\alpha<0$ holds for $b_Y$, that case implies
\[
\limsup_{t\to\infty}\log P_x(\tau_0(X)>t)\leq\limsup_{t\to\infty}\log P_x(\tau_0(Y)>t)\leq-\,\gamma(p,\beta),
\]
once again. \hfill$\square$

%%%%%%%%%%%%%%%%%%%%%%%%%%%%%%%%%%%%%%%%%%%%%%%%%%%%%%%%%%%%%%%%%%%%%%%%%%%%%%%%%%%%%%
\section{Proof of Theorem \ref{thm1.2}}\label{sec7}

\n We need the following bounds due to Potter. See Theorem 1.5.6 in Bingham, Goldie and Teugels (1989).
\begin{thm}\label{thm7.1}
(a) If $\ell$ is slowly varying at infinity, then for any choice of $A>1$ and $\delta>0$, there is $M>0$ such that
\[
\ell(y)/\ell(x)\leq A\max\left((y/x)^\delta,(y/x)^{-\delta}\right),\quad x,y\geq M.
\]
(b) If $\ell$ is slowly varying at zero from the right, then for any choice of $A>1$ and $\delta>0$, there is $M>0$ such that
\[
\ell(y)/\ell(x)\leq A\max\left((y/x)^\delta,(y/x)^{-\delta}\right),\quad x,y\leq M.
\]
\end{thm}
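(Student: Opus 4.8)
The plan is to deduce both parts from the Karamata representation theorem for slowly varying functions (Theorem 1.3.1 in Bingham, Goldie and Teugels (1989)); this is the natural tool, since the asserted inequality must hold over the unbounded range of ratios $y/x$, which is beyond the reach of the uniform convergence theorem by itself.

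For part (a), I would write the representation
\[
\ell(x)=c(x)\exp\left(\int_a^x\frac{\eta(t)}{t}\,dt\right),\qquad x\ge a,
\]
where $c(x)\to c_0\in(0,\infty)$ and $\eta(t)\to 0$ as $t\to\infty$. Given $A>1$ and $\delta>0$, first pick $R_1$ so that $|\eta(t)|\le\delta$ for all $t\ge R_1$, and then pick $R_2$ so that $c_0A^{-1/2}\le c(x)\le c_0A^{1/2}$ for all $x\ge R_2$ (possible since $c(x)\to c_0>0$); because $A>1$, this forces $c(y)/c(x)\le A$ whenever $x,y\ge R_2$. Set $M=\max(R_1,R_2)$. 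Then for $y\ge x\ge M$,
\[
\frac{\ell(y)}{\ell(x)}=\frac{c(y)}{c(x)}\exp\left(\int_x^y\frac{\eta(t)}{t}\,dt\right)\le A\exp\left(\delta\int_x^y\frac{dt}{t}\right)=A\,(y/x)^\delta,
\]
and the symmetric computation (writing $\ell(y)/\ell(x)$ with the integral running from $y$ to $x$) gives, for $y\le x$ with $x,y\ge M$, the bound $\ell(y)/\ell(x)\le A\,(x/y)^\delta=A\,(y/x)^{-\delta}$. In either case $\ell(y)/\ell(x)\le A\max\left((y/x)^\delta,(y/x)^{-\delta}\right)$, which is the claim.

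For part (b), set $\ell^{*}(u)=\ell(1/u)$. For each $\lambda>0$ we have $\ell^{*}(\lambda u)/\ell^{*}(u)=\ell(1/(\lambda u))/\ell(1/u)\to 1$ as $u\to\infty$, since $\ell$ is slowly varying at zero from the right, so $\ell^{*}$ is slowly varying at infinity. Apply part (a) to $\ell^{*}$ with the same $A$ and $\delta$ to obtain a threshold $M^{*}$, and set $M=1/M^{*}$. For $x,y\le M$ we have $1/x,\,1/y\ge M^{*}$, so applying the part-(a) bound to $\ell^{*}$ at the points $1/x$ and $1/y$, and noting that $(1/y)/(1/x)=x/y$ (which merely swaps the two arguments of the $\max$), yields $\ell(y)/\ell(x)\le A\max\left((y/x)^\delta,(y/x)^{-\delta}\right)$ for all $x,y\le M$.

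The only substantive input is the representation theorem; the rest is bookkeeping. The one point that needs a little care is obtaining $c(y)/c(x)\le A$, not merely $c(y)/c(x)\to 1$, and this is exactly what the two-sided sandwich $c_0A^{-1/2}\le c\le c_0A^{1/2}$ near infinity delivers, using $A>1$.
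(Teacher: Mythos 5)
Your proof is correct. Note that the paper does not prove Theorem \ref{thm7.1}; it simply cites Theorem 1.5.6 of Bingham, Goldie and Teugels (1989), where the proof of Potter's bounds proceeds exactly as you do, via the Karamata representation. Your handling of part (a) is careful: the split into $R_1$ (forcing $|\eta|\le\delta$) and $R_2$ (sandwiching $c$ in $[c_0A^{-1/2},c_0A^{1/2}]$ so that the ratio $c(y)/c(x)\le A$) is the right way to get a clean two-sided bound from the mere convergence $c(x)\to c_0$, and the case analysis $y\ge x$ versus $y\le x$ correctly produces the $\max$ of the two powers. The reduction of part (b) to part (a) via $\ell^*(u)=\ell(1/u)$ is also standard and correct, since the inversion swaps $(y/x)^\delta$ and $(y/x)^{-\delta}$ and so leaves the $\max$ unchanged. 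The only cosmetic omission is that the Karamata representation $\ell(x)=c(x)\exp\bigl(\int_a^x\eta(t)\,dt/t\bigr)$ is valid only for $x\ge a$, so strictly one should also take $M\ge a$; this is implicit and harmless.
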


\bigskip\n The following is a consequence of the Representation Theorem for slowly varying functions (Bingham, Goldie and Teugels (1989), Theorem 1.3.1).
\begin{thm}\label{thm7.2}
(a) If $\ell$ is slowly varying at infinity, then for each $\delta>0$
\[
\lim_{x\to\infty}x^{-\delta}\ell(x)=0\text{ and } \lim_{x\to\infty}x^{\delta}\ell(x)=\infty.
\]
(b) If $\ell$ is slowly varying from the right at $0$, then for each $\delta>0$
\[
\lim_{x\to 0^+}x^{\delta}\ell(x)=0 \text{ and } \lim_{x\to 0^+}x^{-\delta}\ell(x)=\infty.
\]
\end{thm}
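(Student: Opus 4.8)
The plan is to read off all four limits from Potter's bounds (Theorem \ref{thm7.1}), which are already in hand; this is marginally quicker than going through the Representation Theorem, although that route works just as well and is the one hinted at in the remark. Fix $\delta>0$ and pick an auxiliary exponent $\delta'$ with $0<\delta'<\delta$; we also fix $A=2$ (any $A>1$ does).

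For part (a), Theorem \ref{thm7.1}(a) applied with parameters $A=2$ and $\delta'$ gives $M>0$ such that $\ell(y)/\ell(x)\leq 2\max\bigl((y/x)^{\delta'},(y/x)^{-\delta'}\bigr)$ whenever $x,y\geq M$. Holding $x=M$ fixed and letting $y\geq M$, we have $y/M\geq1$, so the maximum is $(y/M)^{\delta'}$ and hence $\ell(y)\leq 2\ell(M)M^{-\delta'}y^{\delta'}$; multiplying by $y^{-\delta}$ gives $y^{-\delta}\ell(y)\leq 2\ell(M)M^{-\delta'}y^{\delta'-\delta}\to0$ as $y\to\infty$, since $\delta'-\delta<0$. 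For the companion limit, use the same inequality with the two arguments interchanged: for $y\geq M$, $\ell(M)/\ell(y)\leq 2\max\bigl((M/y)^{\delta'},(M/y)^{-\delta'}\bigr)=2(y/M)^{\delta'}$, so $\ell(y)\geq\tfrac12\ell(M)M^{\delta'}y^{-\delta'}$ and therefore $y^{\delta}\ell(y)\geq\tfrac12\ell(M)M^{\delta'}y^{\delta-\delta'}\to\infty$.

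Part (b) is the mirror image, using Theorem \ref{thm7.1}(b): there is $M>0$ with $\ell(y)/\ell(x)\leq 2\max\bigl((y/x)^{\delta'},(y/x)^{-\delta'}\bigr)$ for all $x,y\leq M$. Fixing $x=M$ and taking $0<y\leq M$, now $y/M\leq1$, so the active branch is $(y/M)^{-\delta'}$ and $\ell(y)\leq 2\ell(M)M^{\delta'}y^{-\delta'}$, whence $y^{\delta}\ell(y)\leq 2\ell(M)M^{\delta'}y^{\delta-\delta'}\to0$ as $y\to0^+$. Interchanging the arguments as before yields $\ell(y)\geq\tfrac12\ell(M)M^{-\delta'}y^{\delta'}$ for $0<y\leq M$, so $y^{-\delta}\ell(y)\geq\tfrac12\ell(M)M^{-\delta'}y^{\delta'-\delta}\to\infty$, again because $\delta'-\delta<0$.

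There is no genuine obstacle here; the only care needed is bookkeeping — tracking the direction of each inequality, identifying which branch of the maximum is active in the regime $y\to\infty$ versus $y\to0^+$, and choosing $\delta'$ strictly between $0$ and $\delta$ so that the surviving power of $y$ has the correct sign in each of the four limits. As an alternative consistent with the stated attribution, the Representation Theorem lets one write $\ell(x)=c(x)\exp\bigl(\int_a^x u^{-1}\eta(u)\,du\bigr)$ with $c(x)\to c\in(0,\infty)$ and $\eta(u)\to0$; then $x^{\pm\delta}\ell(x)=\mathrm{const}\cdot c(x)\exp\bigl(\int_a^x u^{-1}(\eta(u)\pm\delta)\,du\bigr)$, and since $\eta(u)\pm\delta$ eventually has the sign of $\pm\delta$, the exponent tends to $\pm\infty$, giving the claim; the limits at $0^+$ then follow by the substitution $x\mapsto 1/x$.
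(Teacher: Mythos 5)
Your proof is correct. The paper gives no proof at all for Theorem \ref{thm7.2} --- it simply declares it a consequence of the Representation Theorem (BGT Theorem 1.3.1) and moves on, so any correct derivation is an improvement in self-containedness. Your primary route via Potter's bounds (Theorem \ref{thm7.1}) is a genuinely different one: you fix one argument at the Potter threshold $M$, let the other run to $\infty$ (resp.~$0^+$), pick out the active branch of the $\max$, and observe that the surviving power $y^{\pm(\delta-\delta')}$ has the right sign because $0<\delta'<\delta$; the companion inequality comes from swapping arguments. This has the advantage of reusing a result the paper has already placed on record, at the small cost of introducing the auxiliary exponent $\delta'$. The Representation Theorem route, which you correctly sketch at the end, is the one the paper attributes and is more conceptual --- writing $\ell(x)=c(x)\exp\bigl(\int_a^x u^{-1}\eta(u)\,du\bigr)$ with $c(x)\to c>0$ and $\eta(u)\to0$ makes $x^{\pm\delta}\ell(x)$ have an integrand $u^{-1}(\eta(u)\pm\delta)$ whose eventual sign is that of $\pm\delta$, forcing the exponent to $\pm\infty$. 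Both are standard; one minor point worth flagging in either case is that Potter's theorem in the form cited implicitly uses that a measurable slowly varying function is eventually locally bounded, which is part of the package in BGT and so not an issue here.
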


\bigskip
\n We will also need the following fact to prove Theorem \ref{thm1.2}.

%\bigskip\n{\bf Upper Bound.} 
\begin{lem}\label{lem7.3}
Let $\delta>0$ be so small that $0<p-\delta<p+\delta<1$ and $0<q-\delta<q+\delta<1$. By decreasing $M_1$ and increasing $M_2$ if necessary, we have
\[
x^{-\delta}\leq\ell_2(x)\leq x^\delta,\quad x\geq M_2
\]
and
\[
x^{\delta}\leq\ell_1(x)\leq x^{-\delta},\quad x\leq M_1.
\]
\end{lem}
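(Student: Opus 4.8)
The plan is to read this off directly from Theorem \ref{thm7.2}; the only real work is the bookkeeping needed to confirm that shrinking $M_1$ and enlarging $M_2$ does not destroy the structure \eqref{eq1.5}.

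First I would handle $\ell_2$. Since $\ell_2$ is slowly varying at infinity, Theorem \ref{thm7.2}(a) gives $x^{-\delta}\ell_2(x)\to 0$ and $x^{\delta}\ell_2(x)\to\infty$ as $x\to\infty$, so there is $N_2>1$ with $x^{-\delta}\ell_2(x)\le 1\le x^{\delta}\ell_2(x)$, i.e. $x^{-\delta}\le\ell_2(x)\le x^{\delta}$, for all $x\ge N_2$; I then replace $M_2$ by $\max(M_2,N_2)$. Symmetrically, since $\ell_1$ is slowly varying at zero from the right, Theorem \ref{thm7.2}(b) gives $x^{\delta}\ell_1(x)\to 0$ and $x^{-\delta}\ell_1(x)\to\infty$ as $x\to0^+$, so there is $N_1\in(0,1)$ with $x^{\delta}\le\ell_1(x)\le x^{-\delta}$ for $0<x\le N_1$; I replace $M_1$ by $\min(M_1,N_1)$. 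Taking $N_1<1<N_2$ keeps the two-sided inequalities internally consistent (note $x^{\delta}\le x^{-\delta}$ when $x\le M_1<1$, and $x^{-\delta}\le x^{\delta}$ when $x\ge M_2>1$), and the new parameters still satisfy $0<M_1<M_2$.

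Finally I would check that \eqref{eq1.5} still holds after these replacements. Decreasing $M_1$ only enlarges the middle interval by the compact piece $[M_1^{\mathrm{new}},M_1^{\mathrm{old}}]\subset(0,\infty)$, on which the drift equals $-\alpha(x)x^{q}\ell_1(x)$, a continuous function (since $\alpha$ and $\ell_1$ are continuous on $(0,\infty)$) and hence bounded and measurable there; likewise increasing $M_2$ adds the compact piece $[M_2^{\mathrm{old}},M_2^{\mathrm{new}}]$, on which the drift equals $-\beta(x)x^{p}\ell_2(x)$, again continuous and hence bounded measurable. Thus $b$ retains the form \eqref{eq1.5} with the new $M_1<M_2$, and the asserted inequalities hold by construction. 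There is essentially no obstacle here beyond this bookkeeping; one could equally invoke Potter's bounds (Theorem \ref{thm7.1}) in place of Theorem \ref{thm7.2}.
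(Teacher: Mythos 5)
Your proof is correct, and it is actually more direct than the paper's. You read the two desired inequalities straight from Theorem \ref{thm7.2}: since $x^{-\delta}\ell_2(x)\to 0$ and $x^{\delta}\ell_2(x)\to\infty$ as $x\to\infty$, there is an $N_2$ beyond which both $x^{-\delta}\ell_2(x)\leq 1$ and $x^{\delta}\ell_2(x)\geq 1$, i.e.\ $x^{-\delta}\leq\ell_2(x)\leq x^{\delta}$; symmetrically for $\ell_1$ near $0$ via Theorem \ref{thm7.2}(b). The paper instead uses Theorem \ref{thm7.2} only to normalize $\ell_2$ at the single point $M_2$ (arranging $M_2^{-\delta}\ell_2(M_2)\leq 1/2 < 2\leq M_2^{\delta}\ell_2(M_2)$) and then deploys Potter's bounds (Theorem \ref{thm7.1}) to propagate those inequalities to all $x\geq M_2$. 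Both arguments are valid, but yours avoids Potter's bounds entirely: the pointwise limits already hold \emph{for all sufficiently large} (resp.\ small) $x$, so there is nothing to propagate. Your extra paragraph verifying that shrinking $M_1$ and enlarging $M_2$ preserves the structure of \eqref{eq1.5} is a reasonable bookkeeping point that the paper leaves implicit; it is correct because the drift on the two added compact segments is continuous, hence bounded measurable.
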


\begin{proof} By making $M_2$ bigger if necessary, by Theorem \ref{thm7.2}(a),
\begin{equation}\label{eq7/1}
M_2^{-\delta}\ell_2(M_2)\leq 1/2<2\leq M_2^{\delta}\ell_2(M_2).
\end{equation}
Then by Theorem \ref{thm7.1}(a), again making $M_2$ bigger if necessary, we have for $x\geq M_2$,
\begin{align*}
\ell_2(x)&\leq 2\max\left((x/M_2)^{\delta},(x/M_2)^{-\delta}\right)\ell_2(M_2)\\
&=2(x/M_2)^\delta\ell_2(M_2)\\
&=2x^\delta M_2^{-\delta}\ell_2(M_2)\\
&\leq 2x^\delta(1/2)\\
&= x^\delta
\end{align*}
and
\begin{align*}
\ell_2(M_2)&\leq 2\max\left((M_2/x)^{\delta},(M_2/x)^{-\delta}\right)\ell_2(x)\\
&=2(x/M_2)^{\delta}\ell_2(x)\\
&\leq (M_2^\delta\ell_2(M_2))(x/M_2)^{\delta}\ell_2(x)\\
&= x^\delta\ell_2(M_2)\ell_2(x).
\end{align*}
Rearranging the latter, we have
\[
x^{-\delta}\leq \ell_2(x).
\]
Combined, we have
\begin{equation}\label{eq7.2}
x\geq M_2\implies x^{-\delta}\leq\ell_2(x)\leq x^\delta.
\end{equation} 

\bigskip\n
Similarly, by making $M_1$ smaller if necessary, by Theorems \ref{thm7.2}(b) and \ref{thm7.1}(b) we have
\begin{equation}\label{eq7.3}
x\leq M_1\implies x^{\delta}\leq\ell_1(x)\leq x^{-\delta}.
\end{equation}
\end{proof}

\bigskip\n {\emph{Proof of Theorem 1.2.} Let $\vp>0$ be small so that $0<\beta_0-\vp<\beta_0+\vp<1$, and if necessary, decrease $M_1$ and increase $M_2$ so that Lemma \ref{lem7.3} continues to apply and we have 
\[
\alpha_0=\sup_{(0,M_1]}|\alpha|<\infty
\]
and 
\[
x\geq M_2\implies 0<\beta_0-\vp<\beta(x)<\beta_0+\vp<1.
\]
Define
\[
\widetilde b(x)=\left\{
\begin{array}{ll}
\alpha_0 x^{q-\delta},&\qquad 0<x\leq M _1\\
b(x),&\qquad M_1<x<M_2\\
-(\beta_0+\vp) x^{p+\delta},&\qquad M_2\leq x,
\end{array}
\right.
\]
and 
\[
\overline b(x)=\left\{
\begin{array}{ll}
-\alpha_0 x^{q-\delta},&\qquad 0<x\leq M _1\\
b(x),&\qquad M_1<x<M_2\\
-(\beta_0-\vp) x^{p-\delta},&\qquad M_2\leq x.
\end{array}
\right.
\]
Let $\widetilde X_t$ be the solution of
\[
d\widetilde X_t=dB_t+\widetilde b(\widetilde X_t)\,dt,\quad \widetilde X_0=x
\]
and let $\overline X_t$ be the solution of
\[
d\overline X_t=dB_t+\overline b(\overline X_t)\,dt,\quad \overline X_0=x.
\]
Since $\overline b\leq b\leq\widetilde b$, by the Comparison Theorem we have
\[
P_x(\tau_0(\overline X)>t)\leq P_x(\tau_0(X)>t)\leq P_x(\tau_0(\widetilde X)>t).
\]
But Theorem \ref{thm1.1} applies to $\widetilde X_t$ and $\overline X_t$ and so we get
\begin{align*}
-\gamma(p-\delta,\beta_0-\vp)&\leq\liminf_{t\to\infty}t^{-(1-p)/(1+p)}\log P_x(\tau_0(X)>t)\\
&\leq\limsup_{t\to\infty}t^{-(1-p)/(1+p)}\log P_x(\tau_0(X)>t)\\
&-\gamma(p+\delta,\beta_0+\vp).
\end{align*}
Let $\vp$ and $\delta$ go to $0$ to get the desired conclusion.\hfill$\square$

%%%%%%%%%%%%%%%%%%%%%%%%%%%%%%%%%%%%%%%%%%%%%%%%%%%%%%%%%%%%%%%%%%%%%%%%%%%%%%%%%%%%%%

%%%%%%%%%%%%%%%%%%%%%%%%%%%%%%%%%%%%%%%%%%%%%%%%%%%%%%%%%%%%%%%%%%%%%%%%%%%%%%%%%%%%%%
\newpage

\end{document}